\documentclass[journal,12pt,draftclsnofoot,onecolumn]{IEEEtran}
%\documentclass[journal]{IEEEtran}
%%%%%%%%%%%%%%%%%%%%%%%%%%%%%%%%%%%%%%%%%%%%%%%%%%%%%%%%%%%%%%%%
% Input.tex 
% Last organized by Takao Inoue, March 12, 2008
%%%%%%%%%%%%%%%%%%%%%%%%%%%%%%%%%%%%%%%%%%%%%%%%%%%%%%%%%%%%%%%%

%%%%%%%%%%%%%%%%%%%%%%%%%%%%%%%%%%%%%%%%%%%%%%%%%%%%%%%%%%%%%%%%
% Included packages
%%%%%%%%%%%%%%%%%%%%%%%%%%%%%%%%%%%%%%%%%%%%%%%%%%%%%%%%%%%%%%%%
\usepackage{amsfonts}
\usepackage{amsmath, amsthm}
\usepackage{amssymb}
\usepackage{cite}
\usepackage{dsfont}
\usepackage{latexsym}
\usepackage{times}
\usepackage{url}
\usepackage{verbatim}

%%% If you're having trouble running LaTeX to generate a DVI, comment the below.
%%% If you want to use PDF Latex and cant, try uncommenting it.
\usepackage{graphicx}
%\usepackage{picins}
%\DeclareGraphicsExtensions{.pdf}

%\DeclareGraphicsExtensions{.jpg}
%\usepackage[dvips]{graphicx}
%\DeclareGraphicsExtensions{.eps}

%%%%%%%%%%%%%%%%%%%%%%%%%%%%%%%%%%%%%%%%%%%%%%%%%%%%%%%%%%%%%%%%
% Included packages
%%%%%%%%%%%%%%%%%%%%%%%%%%%%%%%%%%%%%%%%%%%%%%%%%%%%%%%%%%%%%%%%
\newtheorem{theorem}{Theorem}

%\newtheorem{definition}[theorem]{Definition}

%\newtheorem{lemma}[theorem]{Lemma}

%\newenvironment{proof}{ \textbf{Proof:} }{ \hfill $\Box$}

% own definitions 
\newtheorem{definition}{Definition}
\newtheorem{lemma}{Lemma}

%\newcommand{\comment}[1]{}
%\newcommand{\eqref}[1]{(\ref{#1})}

%\renewcommand\citeform[1]{[#1]}
%\renewcommand\citeleft{}
%\renewcommand\citeright{}

%%%%%%%%%%%%%%%%%%%%%%%%%%%%%%%%%%%%%%%%%%%%%%%%%%%%%%%%%%%%%%%%
% blackboard lowercase
%%%%%%%%%%%%%%%%%%%%%%%%%%%%%%%%%%%%%%%%%%%%%%%%%%%%%%%%%%%%%%%%

\def\bb0{{\mathbb{0}}}

%%%%%%%%%%%%%%%%%%%%%%%%%%%%%%%%%%%%%%%%%%%%%%%%%%%%%%%%%%%%%%%%
% Bold lowercase
%%%%%%%%%%%%%%%%%%%%%%%%%%%%%%%%%%%%%%%%%%%%%%%%%%%%%%%%%%%%%%%%

\def\bb{{\mathbf{b}}}

\def\b0{{\mathbf{0}}}

%%%%%%%%%%%%%%%%%%%%%%%%%%%%%%%%%%%%%%%%%%%%%%%%%%%%%%%%%%%%%%%%
% Bold capital letters
%%%%%%%%%%%%%%%%%%%%%%%%%%%%%%%%%%%%%%%%%%%%%%%%%%%%%%%%%%%%%%%%

%%%%%%%%%%%%%%%%%%%%%%%%%%%%%%%%%%%%%%%%%%%%%%%%%%%%%%%%%%%%%%%%
% Blackboard capital letters
%%%%%%%%%%%%%%%%%%%%%%%%%%%%%%%%%%%%%%%%%%%%%%%%%%%%%%%%%%%%%%%%

%%%%%%%%%%%%%%%%%%%%%%%%%%%%%%%%%%%%%%%%%%%%%%%%%%%%%%%%%%%%%%%%
% Caligraphic capital letters
%%%%%%%%%%%%%%%%%%%%%%%%%%%%%%%%%%%%%%%%%%%%%%%%%%%%%%%%%%%%%%%%

%%%%%%%%%%%%%%%%%%%%%%%%%%%%%%%%%%%%%%%%%%%%%%%%%%%%%%%%%%%%%%%%
% Sans serif capital letters
%%%%%%%%%%%%%%%%%%%%%%%%%%%%%%%%%%%%%%%%%%%%%%%%%%%%%%%%%%%%%%%%

%%%%%%%%%%%%%%%%%%%%%%%%%%%%%%%%%%%%%%%%%%%%%%%%%%%%%%%%%%%%%%%%
% sans serif lowercase
%%%%%%%%%%%%%%%%%%%%%%%%%%%%%%%%%%%%%%%%%%%%%%%%%%%%%%%%%%%%%%%%

\def\sf0{{\mathsf{0}}}

% R M capital letters

% R M lowercase

\def\rm0{{\mathrm{0}}}

%\def\Es{{E_\mathrm{s}}}

%% Include Packages

%%\usepackage[usenames,dvipsnames]{color}
%%\usepackage{setspace}
%%\usepackage{amsmath}
%%\usepackage{graphicx}
%%\usepackage{epstopdf}
%%\usepackage{amssymb}
%%\usepackage[hyphens]{url}
%%\usepackage{cite}
%%\usepackage{subfig}
%%\usepackage[usenames,dvipsnames]{color}
%\usepackage{setspace}
%\usepackage[margin=1in]{geometry}
\usepackage[]{hyperref}
\hypersetup{pdftex,colorlinks=true,allcolors=blue}
\usepackage{hypcap}
\usepackage{enumitem}
\usepackage[usenames, dvipsnames]{color}

\interfootnotelinepenalty=10000
%%\usepackage{bm}
%%\usepackage[utf8]{inputenc}
%%\usepackage[english]{babel}
%%\usepackage{amsthm}
%%\usepackage{enumerate}
%%\usepackage{siunitx}
%% Style

%\theoremstyle{remark}
%\newtheorem{remark}{Remark}

%%A
%%\newcommand{\btd}{\mathbf{a}_{\mathrm{T}}(\phi_d,\theta_d)}
%%\newcommand{\bt}{\mathbf{a}_{\mathrm{T}}(\phi,\theta)}
%%\newcommand{\brd}{\mathbf{a}_{\mathrm{R}}(\phi_d,\theta_d)}
%%\newcommand{\br}{\mathbf{a}_{\mathrm{R}}(\phi,\theta)}
%%\newcommand{\am}{\mathrm{a}}
%%\newcommand{\angt}{(\phi_{\mathrm{T},p},\theta_{\mathrm{T},p})}
%%\newcommand{\angr}{(\phi_{\mathrm{R},p},\theta_{\mathrm{R},p})}
%%\newcommand{\angtxt}{(\phi_{\mathrm{T},0},\theta_{\mathrm{T},0})}
%%\newcommand{\angrxt}{(\phi_{\mathrm{R},0},\theta_{\mathrm{R},0})}
%%\DeclareMathOperator*{\argminA}{arg\,min}
%%\DeclareMathOperator*{\argmaxA}{arg\,max}

%%B

%%C

%%D

%%E

%%F

%%G

%%H

%%I

%%J

%%K

%%L

%%M

%%N

%\newcommand{\Nt}{N_{\txm}}
%\newcommand{\Nr}{N_\mrx}

%%O

%%P

%%Q

%%R

%%S

%%T

%%U

%%V

%%W

%%X

%%Y

%%Z

%%Misc

\usepackage{graphicx,epstopdf}
\usepackage{caption,subcaption}% http://ctan.org/pkg/{caption,subcaption}

\usepackage[ruled,vlined]{algorithm2e}
\usepackage{algorithmic}
\usepackage{xr-hyper}
\usepackage{hyperref}
\usepackage{mathtools}

\usepackage{xr}
\externaldocument{SupplementalMaterial}

\hyphenation{op-tical net-works semi-conduc-tor}

\title{Embedding a Heavy-Ball type of Momentum into the Estimating Sequences}
\author{{Endrit~Dosti, {\it Student Member, IEEE}, Sergiy~A.~Vorobyov, {\it Fellow, IEEE}, and~Themistoklis~Charalambous, {\it Senior Member, IEEE}}
	\thanks{E. Dosti and S. A. Vorobyov are with the Department of Signal Processing and Acoustics, Aalto University, Finland, e-mail:  firstname.lastname@aalto.fi. {\it (Corresponding author is Sergiy A. Vorobyov.)}}% <-this % stops a space
	%\thanks{Yu. Nesterov is with the Center for Operations Research and Econometrics (CORE), Catholic University of Louvain (UCL), 1348 Louvain-la-Neuve, Belgium, e-mail: firstname.lastname@uclouvain.be.}
	\thanks{T. Charalambous is with the Department of Electrical and Computer Engineering, University of Cyprus, Cyprus, and the Department of Electrical Engineering and Automation, Aalto University, Finland, e-mail: firstname.lastname@aalto.fi.}}% <-this % stops a space

\begin{document}
	\maketitle
	
	\begin{abstract}
		We present a new accelerated gradient-based method for solving smooth unconstrained optimization problems. The goal is to embed a heavy-ball type of momentum into the Fast Gradient Method (FGM). For this purpose, we devise a generalization of the estimating sequences, which allows for encoding any form of information about the cost function that can aid in further accelerating the minimization process. In the black box framework, we propose a construction for the generalized estimating sequences, which is obtained by exploiting the history of the previously constructed estimating functions. From the viewpoint of efficiency estimates, we prove that the lower bound on the number of iterations for the proposed method is $\mathcal{O} \left(\sqrt{\frac{\kappa}{2}}\right)$. Our theoretical results are further corroborated by extensive numerical experiments on various types of optimization problems, often dealt within signal processing. Both synthetic and real-world datasets are utilized to demonstrate the efficiency of our proposed method in terms of decreasing the distance to the optimal solution, as well as in terms of decreasing the norm of the gradient. 
	\end{abstract}
	
	\IEEEpeerreviewmaketitle

	%%%%%%%%%%%%%%%%%%%%%%%%%%%%%%%%%%%%%%%
	\section{Introduction}
	\label{sec:intro}
	%%%%%%%%%%%%%%%%%%%%%%%%%%%%%%%%%%%%%%%
	
	A large number of problems arising in different scientific disciplines, such as, signal and image processing, communications, data analysis and machine learning (see \cite{6879577, 6879615, 8119874, 9053189, 9028239, gu2017projected} and references therein), can be cast as the minimization of a real-valued smooth and convex objective function:
	\begin{equation} 
		\label {opt_prob}
		\underset{x \in \mathcal{R}^n}{\text{minimize}} \: f(x), 
	\end{equation}
	where $f:\mathcal{R}^n \rightarrow \mathcal{R}$ is a $\mu$-strongly convex function with $L$-Lipschitz continuous gradient defined by a black-box oracle. As the size of these problems increases, it becomes necessary to resort to iterative methods for finding the optimal solution denoted by $x^*$. In large-scale optimization, a central role is played by first-order algorithms \cite{Beck_book}. In this work, we draw attention to the new generalized estimating sequences and convergence analysis for accelerated first-order methods in their purity. Therefore, we focus on constructing a first-order method for solving  the problem of minimizing smooth and strongly convex objectives as given in \eqref{opt_prob}.\footnote{The results obtained in this paper can be extended as well to solve composite objective problems with a non-smooth term, which is an issue that will be addressed in a later work.} Within this class of methods, one of the most important breakthroughs is the Fast (or Accelerated) Gradient Method (FGM) \cite{Nesterov_83}. Under the assumption of known parameters $\mu$ and $L$, the method reaches the complexity of $\mathcal{O}\left(\sqrt{\kappa}\right)$, where $\kappa = \frac{L}{\mu}$ is the condition number. In view of classic complexity theory for convex optimization by Nemirovski and Yudin \cite{Nemirovski_Yudin}, the method is also optimal in the sense that it minimizes the number of calls of a first-order oracle.

	Interest in FGM surged again with the paper on smoothing techniques \cite{nesterov2005smooth}. Therein, a smooth approximation of a non-smooth objective function is constructed, and then FGM is used to efficiently find the optimal solution. Following up on the work, several extensions were proposed. In \cite{Auslender_Teboulle}, the authors have proposed a class of interior gradient algorithms which exhibit an $\mathcal{O} \left(\frac{1}{k^2}\right)$ global convergence rate. In \cite{FISTA} and \cite{Nesterov_2007}, FGM has been extended to solve convex composite objectives. Another important aspect of FGM-type methods is the robustness to incorrect computation of the gradient of the objective function. It has been shown that FGM suffers from error accumulation, and to preserve the improved convergence rate only small gradient noise can be tolerated \cite{Aspremont, devolder2014first, schmidt2011convergence}. A precise characterization of the lower bounds on the performance of noisy FGMs for the case of ill-conditioned quadratic objective functions has recently been reported in \cite{9137636}.
	
	More recently, motivated by the need to construct even faster algorithms to solve large-scale problems with smooth objective functions, new perspectives of FGM and different reasons behind acceleration have been discussed, leading to new algorithms that achieve the optimal rate \cite{pmlr-v40-Flammarion15, Su_Boyd_Candes, WibisonoE7351, Allen-Zhu, Bubeck_geod, Fazel, Recht}. In \cite{pmlr-v40-Flammarion15, Su_Boyd_Candes, WibisonoE7351}, for example, the continuous-time limit of FGM is modeled as a second-order ordinary differential equation. Another perspective of FGM appears in \cite{Allen-Zhu}, where it is shown that FGM can be obtained by exploiting the linear coupling between gradient and mirror descent. In \cite{Bubeck_geod}, the authors have developed an alternative accelerated gradient method, which is inspired by the ellipsoid method. Links between the method introduced in \cite{Bubeck_geod} and FGM have been established in \cite{Fazel}. In \cite{Recht}, the convergence rates for FGM using theory from robust control have been derived. Utilizing the framework proposed in \cite{Recht}, in \cite{van2017fastest} the authors have introduced the Triple Momentum Method (TMM). The method is defined only for $\mu > 0$, and for the class of smooth and strongly convex functions it enjoys a faster convergence rate than FGM. However, as demonstrated in \cite[Table 2]{van2017fastest}, the constant terms present in the bound on the number of iterations needed until convergence for TMM depend on the condition number of the problem. In the case of ill-conditioned problems, it exceeds the bound of FGM, thus requiring more iterations to converge.

	A novel approach for analyzing the worst-case performance of first-order black-box optimization methods has appeared in \cite{drori2014performance}. The analysis conducted therein relies on the observation that the worst-case accuracy improvement of a black-box method is itself an optimization problem, and can be cast as a semidefinite program (SDP). In \cite{taylor2017smooth}, the authors utilize convex interpolation tools to show that the worst-case accuracies provided by the SDP are tight. Within this framework, for the class of smooth convex functions, i.e., $\mu = 0$, optimal methods have been presented in \cite{kim2016optimized} and \cite{drori2017exact}. Promising, yet unpublished frameworks, have been presented in \cite{taylor2021optimal} and \cite{park2021factor}, wherein the authors claim to have derived optimal methods for minimizing smooth and strongly convex optimization problems. Moreover, in \cite{Ryu}, the authors have identified several geometric structures, which are satisfied by a wide range of accelerated first-order methods. Based on these structures, they have introduced several accelerated methods, and have established their efficiency in decreasing the norm of the gradient for problems with composite objective functions. Despite the optimistic results that have appeared within these frameworks, their applicability remains limited to the design of first-order methods for convex optimization problems. On the other hand, the acceleration idea which was first introduced in FGM, has also been extended to other optimization settings, such as stochastic optimization \cite{NIPS2009_3817, kulunchakov:hal-01993531, lan2012optimal}, non-Euclidean optimization \cite{ahn2020nesterov, pmlr-v75-zhang18a}, higher-order methods \cite{baes2009estimate, fast_newton_method, nesterov2020inexact} and non-convex optimization \cite{doi:10.1137/17M1114296, ghadimi2016accelerated, li2015accelerated}. Moreover, large number of new applications have further extended the reach of the idea \cite{ji2009accelerated, boom, uribe2020dual, ye2020multiconsensus}. 
	
	An optimization method is considered optimal if it enjoys the following properties: \textit{i}) it exhibits the accelerated convergence rate; \textit{ii}) it reaches a complexity that is proportional to the lower complexity bounds. For the case of first-order methods, the complexity bounds have been introduced in \cite{Nemirovski_Yudin}. Several frameworks for constructing such methods have already been presented in the literature \cite{Nesterov_83, Nesterov_88, nesterov2005smooth}, and a unified analysis of the methods has been introduced in \cite{Tseng}. The exact worst-case performance of the method introduced in \cite{Nesterov_83} has been thoroughly characterized in \cite{taylor2017smooth}. The framework introduced in \cite{Nesterov_88} has been further studied in \cite{Nesterov_book}, wherein it is argued that the key behind constructing optimal methods is the accumulation of global information of the function that is being minimized. For this purpose, the estimating sequences are introduced. They consist of the pair $\{\phi_{k}(x)\}_{k=1}^\infty$, $\{\lambda_{k}\}_{k=1}^\infty$ and allow for parsing information around carefully selected points at each iteration, while also measuring the rate of convergence of the iterates. In the case of first-order methods, this intuition is provably correct; however, the construction of the estimating sequences is not unique, and finding a better construction, in the sense that it leads to more efficient methods, is an open question. A simple, self-contained and unified framework for the study of methods devised within the estimating sequence framework has been introduced in \cite{baes2009estimate}. Therein, the author shows how several accelerated schemes can be obtained, and provide some guidelines on the design of further estimating sequence methods. Evidently, picking the right functions to construct the estimating sequences, can lead to much faster algorithms. For example, the variant of FGM constructed in \cite{Nesterov_book} and its extension to convex composite objectives, i.e., the Accelerated Multistep Gradient Scheme (AMGS) \cite{Nesterov_2007} have been constructed using different variants of estimating sequences and are both optimal methods. The link between the two estimating sequences, as well as its implications, has been investigated in \cite{Iulian_1} and \cite{Iulian_2}.
	
	Despite being based on different variants of estimating sequences, both FGM and AMGS share the fact that the update of iterates at step $k + 1$ is done by utilizing the information available at step $k$. From the theory of the heavy-ball method \cite{Polyak}, it is known that parsing information from iterates at step $k-1$ can accelerate the minimization process. Naturally, the following question arises: ``Is it possible to explicitly embed information from earlier iterates into FGM?". We answer this question affirmatively, and propose a way to generalize the design of estimating sequences by including a newly introduced heavy-ball type of momentum term in them.\footnote{Our framework, however, can be thought as a general way of encoding any form of information about the objective function that can aid in further accelerating the minimization process.} We show that embedding our proposed type of heavy-ball momentum term into Nesterov's acceleration framework leads to a more powerful class of algorithms. Our main contributions here are the following.
	\begin{itemize}
		\item From the theoretical perspective, we show that the original construction of the estimating functions can be generalized by incorporating extra terms that depend on the previous iterates.
		\item To establish the properties of the newly introduced generalized estimating sequences, we revise the key lemmas and results established for the classical estimating sequences. Moreover, we utilize novel tools to introduce new results, as well as more intuition behind the design of estimating sequence methods.
		\item Within the black-box framework, we present a new type of heavy-ball momentum, which is captured by the newly introduced sequence of quadratic functions. Unlike the classical method introduced in \cite{Polyak}, wherein the heavy-ball momentum is utilized to stabilize the oscillations of the iterates, our proposed type of heavy-ball momentum is utilized for stabilizing the estimating sequences.
		\item From the algorithmic perspective, we develop a new method and show that (in black-box framework) it allows for embedding a heavy-ball type of momentum into FGM. Moreover, we show that FGM can be obtained as the special case when the memory terms are not considered.
		\item In terms of convergence guarantees, we show that the original results obtained for FGM can be improved. We prove that our proposed method is also an optimal method, and show that its lower bound on the number of iterations converges to $\sqrt{\frac{L}{2 \mu}} \left(\text{ln} \left(\frac{\mu R_0^2}{2 \epsilon}\right) + \text{ln}(5)\right)$, where $R_0  =||x_0 - x^*||$ and the accuracy $\epsilon \leq \frac{\mu}{2} R_0^2$. In other words, from the viewpoint of efficiency estimates, our proposed method outperforms FGM by at least a factor of $\frac{1}{\sqrt{2}}$.
		\item {Our proposed convergence analysis allows for initializing the parameter $\gamma_0 = 0$. Note that in the case of FGM, the convergence of the method was proved only when $\gamma_0 \in [\mu, 3L + \mu]$. As shown in Section \ref{numericals}, this yields an improvement over FGM. At the same time, it also makes the initialization of the proposed method more robust to the imperfect knowledge of $\mu$.}
		\item From applications perspective, we show through extensive simulations the efficiency of utilizing our method to solve various problems using both synthetic and real-world datasets.
	\end{itemize} 
	
	The paper is organized as follows. Preliminaries and reasoning needed for our developments are given in Section~\ref{Preliminaries, the strongly convex case}. In Section~\ref{SFGM}, we then develop the new method by embedding the heavy-ball momentum into the estimating sequence framework. Section~\ref{Convergence analysis} is devoted to the convergence analysis of the proposed method. Numerical study of our proposed method based on some problems frequently appearing in signal processing is performed in Section~\ref{numericals}. The paper ends with discussion and appendices.

	%%%%%%%%%%%%%%%%%%%%%%%%%%%%%%%%%%%%
	\section{Preliminaries and intuition}
	\label{Preliminaries, the strongly convex case}
	%%%%%%%%%%%%%%%%%%%%%%%%%%%%%%%%%%%%
	
	Intuitively, being optimal implies that one is making use of the available information in the best way possible. Bearing this in mind, we can start exploiting the information we have at iteration $k = 0$. In the convex setting, since $f: \mathcal{R}^n \rightarrow \mathcal{R}$, then for any non-trivial point $x_1 \in \text{dom}(f)$, that constitutes for the next iterate, three things can happen:
	\begin{enumerate}
		\item $f(x_1) > f(x_0) \geq f(x^*)$, which at first glance is not desirable as it is producing points that are away from $x^*$;
		\item $f(x_1) = f(x_0)$, which would also not be desirable as it suggests that no progress towards $x^*$ was made; 
		\item \label{cond.3} $f(x^*) \leq f(x_1) < f(x_0)$, which is desirable as the next iterate is closer to the solution of our problem $x^*$.
	\end{enumerate}

	Methods that produce sequences that always satisfy condition~\ref{cond.3} are called relaxation methods \cite{Nesterov_book}. The typical approach consists of parsing gradient information, which is the direction of steepest descent of the function, from a first-order oracle. Then, stepping in the opposite direction, where the function value must decrease, yields the next iterate. This greedy approach is widely used for solving optimization problems of the same type as \eqref{opt_prob}. The Gradient Method (GM) belongs to this family of methods, and it is easy to show that it produces a sequence of points $x_k, \; k = 1, 2, \ldots$ that converges to $x^*$ at a linear rate \cite{Boyd-Vandenberghe-04}.   
	
	However, the greedy approach of solving a convex optimization problem is not optimal. This is made more precise in \cite{Nesterov_book}, wherein it is argued that relaxation itself is too microscopic to guarantee convergence in an optimal fashion. Instead, it is suggested that optimal methods must make use of global topological properties of the objective function. This intuition is also confirmed by the performance of second-order methods. As can be seen from \cite[Fig. 9.19]{Boyd-Vandenberghe-04}, Newton's method is constructing ellipsoids around each iterate, which aid in correcting the search direction. Therein, the ellipsoids are obtained by exploiting the information contained in the Hessian of the objective function. In the case of first-order methods, such information about the Hessian is not available. Therefore, instead of constructing ellipsoids, one can consider constructing balls in the locality of the iterate, which allow for accounting for any feasible direction. This suggests utilizing an isotropic scanning function, which at step $k= 0$ would be: $\Phi_0: \mathcal{R}^n \rightarrow \mathcal{R}$. All that is known about this function is the following: 
	\begin{equation}
		\nabla^2 \Phi_0(x) = \gamma_0 I,  \label{scan_funct_cond}
	\end{equation}
	where $x \in \text{dom}(\Phi_0)$, $\gamma_0$ is the scanning radius of the ball, and $I$ is the identity matrix of size $n \times n$. Then, integrating \eqref{scan_funct_cond} twice over $x$, the following construction is obtained:
	\begin{equation}
		\Phi_0(x) = \Phi_0^* + \frac{\gamma_0}{2}||x - x_0||^2, \label{scan_funct}
	\end{equation}
	where $\Phi_0^*$ is the integration constant that characterizes the value of the function $\Phi_0 (x)$ when $x = x_0$, and $||\cdot||$ denotes the $l_2$ norm. As we will see in the sequel, recursively constructing such simple functions as \eqref{scan_funct}, which are referred to as \textit{scanning functions} in the sequel, is an integral component in the construction of the estimating sequences.
	
	Next, we can exploit the information coming from the fact that the cost function is $L$-smooth and $\mu$-strongly convex. Let $\mathcal{I} \subseteq \text{dom} (f)$ and $x, y \in \mathcal{I}$. Then, from \cite[Theorem 2.1.5]{Nesterov_book} we have
	\begin{align}
		\label{upper_bound}
		0 \leq f(x) - f(y) - \nabla f(y)^T (x - y) \leq \frac{L}{2} ||y-x||^2.
	\end{align}
	Moreover, from the definition of strongly convex function \cite[Definition 2.1.3]{Nesterov_book}, we can write
	\begin{align}
		\label{lower_bound}
		f(x) \geq f(y) + \nabla f(y)^T (x - y) + \frac{\mu}{2} ||y - x||^2.  
	\end{align}
	The above bounds suggest the need of utilizing gradient and function evaluation oracles. Throughout the paper, we assume that the computational cost of computing the gradient is comparable to the cost of computing the function values.

	%%%%%%%%%%%%%%%%%%%%%%%%%%%%%%%%%%
	\section{The Proposed Method}
	\label{SFGM}
	%%%%%%%%%%%%%%%%%%%%%%%%%%%%%%%%%%
	In this section, we first generalize the original construction of estimating sequences, and show how they can be computed recursively. Then, based on the new construction of the generalized estimating sequences, we devise our method. We conclude the section by presenting the convergence results and proof of optimality for the proposed method. 
	
	Let us begin by defining the generalized estimating sequences as follows. 
	\begin{definition}
		\label{def__1}
		The sequences $\{\Phi_{k} (x)\}_{k = 0}^\infty$ and $\{\lambda_{k}\}_{k = 0}^\infty$, $\lambda_{k} \geq 0$, are called generalized estimating sequences of the function $f(\cdot)$, if $\exists \psi_k: \mathcal{R}^n \to \mathcal{R}_{+}$, $\lambda_{k} \rightarrow 0$, and $\forall x \in \mathcal{R}^n$, $\forall k \geq 0$ we have 
		\begin{equation}
			\label{def_1}
			\Phi_{k} (x) \leq\lambda_{k} \Phi_{0}(x) + (1 - \lambda_{k}) \left(f(x) - \psi_k (x)\right).
		\end{equation}
	\end{definition} 
	Unlike the classical definition of estimating sequences utilized for constructing FGM \cite[Definition 2.2.1]{Nesterov_book}, the introduction of $\psi_k (x)$ allows for encoding any form of information about the objective function that will be useful in improving the speed at which $x_k \rightarrow x^*$. One can also think of it as a control sequence that, at each iteration, modifies the function that is to be optimized. This modification can be done in several ways, e.g., in white-box implementations $\psi_k (x)$ can be some prior information about the structure of $f(x)$, that would make the resulting function $f(x) - \psi_k(x)$ easier to optimize. In the black-box framework, which is central to our paper, such prior information is not available. Nevertheless, as we will show later, other choices are also possible. For now, we note that by setting $\psi_k (x) = 0, \forall k$, we recover the estimating sequence structure used for FGM. In this sense, Definition~\ref{def__1} is a generalization of the classical estimating sequences.

	Now, we show that the generalized estimating sequences also allow for measuring the convergence rate to optimality. 
	\begin{lemma}
		\label{SFGM_lemma_1}
		If for some sequence of points $\{x_k\}_{k = 0}^\infty$ we have $f(x_k) \leq \Phi_{k}^* \! \triangleq \! \underset{x \in {\mathcal{R}^n}}{ \text{min} } \Phi_{k} (x)$, then $f(x_k) - f(x^*) \leq \lambda_{k} \left[ \Phi_{0}(x^*) - f(x^*) \right] - \left(1 - \lambda_k\right) \psi_k(x^*)$.
	\end{lemma}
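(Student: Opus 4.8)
The plan is to prove the statement by a short chain of inequalities, mirroring the classical estimating-sequence argument of \cite{Nesterov_book} but carrying the extra control term $\psi_k$ all the way through to the end. The only two ingredients needed are the hypothesis $f(x_k) \leq \Phi_k^*$ and the defining inequality \eqref{def_1} of the generalized estimating sequences, which I would evaluate at the single point $x = x^*$. Notably, neither the positivity $\psi_k \geq 0$ nor the asymptotic condition $\lambda_k \to 0$ from Definition~\ref{def__1} is required for this lemma; only the pointwise bound \eqref{def_1} is used.

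First I would observe that, since $\Phi_k^*$ is by definition the global minimum of $\Phi_k(\cdot)$ over $\mathcal{R}^n$, we have $\Phi_k^* \leq \Phi_k(x^*)$. Combined with the hypothesis, this gives $f(x_k) \leq \Phi_k^* \leq \Phi_k(x^*)$. Next I would apply \eqref{def_1} at $x = x^*$ to bound $\Phi_k(x^*)$ above by $\lambda_k \Phi_0(x^*) + (1 - \lambda_k)\left(f(x^*) - \psi_k(x^*)\right)$. Chaining the three bounds yields $f(x_k) \leq \lambda_k \Phi_0(x^*) + (1 - \lambda_k)\left(f(x^*) - \psi_k(x^*)\right)$.

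The remaining step is purely algebraic: subtract $f(x^*)$ from both sides and regroup. Using $(1 - \lambda_k)f(x^*) - f(x^*) = -\lambda_k f(x^*)$, the right-hand side collapses to $\lambda_k\left[\Phi_0(x^*) - f(x^*)\right] - (1 - \lambda_k)\psi_k(x^*)$, which is exactly the claimed bound. I do not anticipate a genuine obstacle here; the one point requiring care is keeping the sign of the $(1 - \lambda_k)\psi_k(x^*)$ term correct through the regrouping, since it enters \eqref{def_1} with a minus sign and must survive as a subtracted quantity in the final inequality. As a consistency check, setting $\psi_k \equiv 0$ recovers the classical bound $f(x_k) - f(x^*) \leq \lambda_k\left[\Phi_0(x^*) - f(x^*)\right]$, in agreement with the fact that Definition~\ref{def__1} reduces to the standard estimating sequences in that case.
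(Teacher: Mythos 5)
Your proof is correct and follows essentially the same route as the paper's: both chain the hypothesis $f(x_k)\leq\Phi_k^*$ with the defining inequality \eqref{def_1} evaluated at $x^*$ and then rearrange (the paper applies \eqref{def_1} under the $\min$ before specializing to $x^*$, while you specialize first, but these are the same argument). Your observation that neither $\psi_k\geq 0$ nor $\lambda_k\to 0$ is needed here is accurate and a nice touch.
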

	\begin{proof}
		See Appendix \ref{Proof of Lemma 1}.  
	\end{proof}
	
	Before showing how to form the generalized estimating sequences, let us define
	\begin{align}
		\label{Psi_definition}
		\Psi_{k} \triangleq   \begin{cases}
			\underset{m \in \{1, 2, \ldots k\}, \; x \in {\mathcal{R}^n}}{ \text{sup}} \psi_{m} (x),       & \quad \text{if } k > 0, \\
			0,  & \quad \text{otherwise}.
		\end{cases}
	\end{align}
	In words, the term $\Psi_k$ is the tightest upper bound on the finite values of $\psi_k(x)$ that will be formed throughout the entire minimization process. At this point, we are ready to show how to construct the generalized estimating sequences.
	\begin{lemma}
		\label{SFGM_lemma_2}
		Assume that there exist sequences $\{\alpha_k\}_{k = 0}^\infty$, where $\alpha_{k} \in (0, 1)$ $\forall k$, $\sum_{k = 0}^{\infty} \alpha_{k} = \infty$, $\{y_k\}_{k = 0}^\infty$ and $\{\psi_k (x)\}_{k = 0}^\infty$ such that $\psi_k (x) \geq 0$ $\forall k = 0, 1, \ldots$. Let $\psi_0(x) = 0$ and $\lambda_{0}$ = 1. Then, the sequences $\{\Phi_{k} (x)\}_{k = 0}^\infty$ and $\{\lambda_{k}\}_{k = 0}^\infty$, which are defined recursively as
		\begin{align}
			\label{lambda_recursive}
			\lambda_{k+1} &= (1 - \alpha_k) \lambda_{k}, \\ 
			\Phi_{k+ 1} (x) &= (1 - \alpha_k) \left( \Phi_{k} (x) + \psi_k (x) \right) - \psi_{k + 1}  (x) - \Psi_k + \alpha_k \psi_{k} (x)\nonumber \\  +  &\alpha_{k} \left( f(y_k) + \nabla f(y_k)^T (x - y_k)  + \frac{\mu}{2} ||x - y_k||^2\right) ,\label{phi_k+1_SFGM}
		\end{align}
		are generalized estimating sequences.
	\end{lemma}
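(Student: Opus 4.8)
The plan is to verify the two requirements of Definition~\ref{def__1} in turn: first that $\lambda_k \to 0$, and then the defining inequality \eqref{def_1}, which I would establish by induction on $k$.

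For the decay of $\{\lambda_k\}$, unrolling \eqref{lambda_recursive} with $\lambda_0 = 1$ gives $\lambda_k = \prod_{i=0}^{k-1}(1-\alpha_i)$. Since each $\alpha_i \in (0,1)$, this also shows $\lambda_k \in (0,1]$ and that the sequence is nonincreasing --- a bound I will need below. Taking logarithms and using $\ln(1-\alpha_i) \leq -\alpha_i$ together with $\sum_k \alpha_k = \infty$ forces $\sum_i \ln(1-\alpha_i) = -\infty$, hence $\lambda_k \to 0$.

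For the inequality \eqref{def_1}, the base case $k=0$ is immediate: with $\lambda_0 = 1$ and $\psi_0 \equiv 0$ it reduces to $\Phi_0(x) \leq \Phi_0(x)$. For the inductive step, I would start from \eqref{phi_k+1_SFGM}, bound the bracketed linear-plus-quadratic term from above by $f(x)$ via the strong-convexity inequality \eqref{lower_bound} (legitimate since $\alpha_k > 0$), and then substitute the inductive hypothesis for $\Phi_k(x)$ after multiplying it by $(1-\alpha_k) > 0$. Collecting terms, the $\Phi_0(x)$ coefficient becomes $(1-\alpha_k)\lambda_k = \lambda_{k+1}$, the $f(x)$ coefficient simplifies to $(1-\alpha_k)(1-\lambda_k) + \alpha_k = 1 - \lambda_{k+1}$, and the $\psi_k(x)$ contributions combine into coefficient $\alpha_k + \lambda_{k+1}$. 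This leaves
\begin{equation*}
\Phi_{k+1}(x) \leq \lambda_{k+1}\Phi_0(x) + (1-\lambda_{k+1})f(x) + (\alpha_k + \lambda_{k+1})\psi_k(x) - \psi_{k+1}(x) - \Psi_k,
\end{equation*}
so that reaching the target bound $\lambda_{k+1}\Phi_0(x) + (1-\lambda_{k+1})(f(x) - \psi_{k+1}(x))$ amounts to proving
\begin{equation*}
(\alpha_k + \lambda_{k+1})\psi_k(x) - \lambda_{k+1}\psi_{k+1}(x) \leq \Psi_k.
\end{equation*}

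The main obstacle, and the genuinely new ingredient relative to the classical argument, is this last inequality, which is where the bookkeeping term $\Psi_k$ from \eqref{Psi_definition} earns its keep. I would discard the nonpositive term $-\lambda_{k+1}\psi_{k+1}(x) \leq 0$ and then use the coefficient bound $\alpha_k + \lambda_{k+1} = \alpha_k + (1-\alpha_k)\lambda_k \leq \alpha_k + (1-\alpha_k) = 1$, which is valid precisely because $\lambda_k \leq 1$. Since $\Psi_k$ is by definition an upper bound on $\psi_m(x)$ over all $m \leq k$ and all $x$, we have $\psi_k(x) \leq \Psi_k$ for $k \geq 1$ (and the $k=0$ case is trivial, as $\psi_0 \equiv 0 = \Psi_0$), whence $(\alpha_k + \lambda_{k+1})\psi_k(x) \leq \psi_k(x) \leq \Psi_k$. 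This closes the induction and establishes that $\{\Phi_k(x)\}$ and $\{\lambda_k\}$ form generalized estimating sequences.
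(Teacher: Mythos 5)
Your proof is correct and follows essentially the same route as the paper's: induction on $k$, the strong-convexity bound \eqref{lower_bound} applied to the bracketed term, and the three relaxations $\psi_k \geq 0$, $\lambda_k \leq 1$, and $\psi_k(x) \leq \Psi_k$ — you have merely consolidated the paper's two separate drops of $\psi_k$-terms into the single residual inequality $(\alpha_k+\lambda_{k+1})\psi_k(x) - \lambda_{k+1}\psi_{k+1}(x) \leq \Psi_k$, which is a cleaner piece of bookkeeping. Your explicit verification that $\lambda_k \to 0$ is a detail the paper's proof omits, and is a welcome addition since it is part of Definition~\ref{def__1}.
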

	\begin{proof}
		See Appendix \ref{Proof of Lemma 2}.
	\end{proof}
	Different from the earlier results summarized in \cite{Nesterov_book}, Lemma~\ref{SFGM_lemma_1} has the following benefits. First, since $\lambda_{k} \geq 0, \; \forall k$, it clarifies why the construction of the regularizing term should be such that $\psi_k (x) \geq 0, \; \forall k = 0, 1, \ldots$. Second, it shows that the convergence rate to optimality now depends on both the sequence $\{\lambda_k\}_{k=0}^{\infty}$ and the sequence $\{\psi_k (x)\}_{k=0}^{\infty}$. Furthermore, the result of Lemma \ref{SFGM_lemma_2} suggests the necessary rules for updating the generalized estimating sequences. 
	
	At this point, we note that the canonical structures for the terms in the sequences $\{\Phi_k (x)\}_{k=0}^\infty$ and $\{\psi_k (x)\}_{k=0}^\infty$ have not been introduced yet, and that Lemmas \ref{SFGM_lemma_2} and \ref{SFGM_lemma_3} hold for any construction of the generalized estimating sequences. These results stress on the generality of our proposed constructions for the newly introduced estimating sequences. Let us now present the constructions that will be used throughout the paper. 
	
	First, we define $\Phi_k (x) \triangleq \phi_k(x) - \psi_k(x)$, where $\phi_k (x) \triangleq \phi_k^* + \frac{\gamma_{k}}{2}||x - v_{k}||^2$ corresponds to the construction proposed in \cite[Lemma 2.2.3]{Nesterov_book}. We have already discussed that the function $\psi_k (x)$ can be selected in many ways. Since our goal here is to construct a generalized version of FGM which operates in a black-box setup, the simplest and quite generic approach to designing $\psi_k (x)$ is to accumulate the history of the previously constructed estimating functions. Therefore, we can define $\psi_k(x)$ as 
	\begin{align}
		\label{psii}
		\psi_k(x) \triangleq \sum_{i = 0}^{k-1} \beta_{i,k} \frac{\gamma_{i}}{2} ||x - v_{i}||^2, \; \forall k = 0, 1, \ldots. 
	\end{align}
	where $\beta_{i,k} \in \mathcal{R}, \forall i = 0, \ldots, k-1$ are weights assigned to each of the previously constructed scanning functions. Hereafter, we refer to $\psi_k (x)$ in \eqref{psii} as a \textit{heavy-ball type of momentum term}. Note that we allow the coefficients $\beta_{i,k}$ to change dynamically across the iterations. The intuition behind this choice follows from the fact that in black-box optimization no prior information on the function is available. Thus, the simplest thing to do is to let the terms in the sequence $\{\Phi_k (x)\}_{k=0}^{\infty}$ ``self-regulate''. Indeed, as the algorithm iterates towards optimality, several scanning functions are constructed. The accumulation of the information contained in the scanning functions is then captured by our model defined in \eqref{psii}. This also allows for defining $\psi_k (x)$ as a momentum term (or a ``heavy ball'') that is not directly applied to the iterates, but to the scanning function. As we will see later, this allows for better control of the parameters of $\Phi_k(x)$. From this perspective, the canonical structure of the new scanning function becomes
	\begin{align}
		\Phi_{k}(x) = \phi_{k}^* + \frac{\gamma_{k}}{2}||x - v_{k}||^2 -  \sum_{i = 0}^{k-1} \beta_{i,k} \frac{\gamma_{i}}{2} ||x - v_{i}||^2, \; \forall k. \label{scan_funct_k_SFGM} 
	\end{align}
	
	Note that we will rigorously establish later that the canonical structure for $\Phi_k (x)$ presented in \eqref{scan_funct_k_SFGM} is preserved by the recursive definition introduced in \eqref{phi_k+1_SFGM}. For now, let us observe that at iteration $k = 0$, \eqref{scan_funct_k_SFGM} is the same as the construction used for FGM. Afterwards, the memory term will begin to affect all the coefficients. From this perspective, a natural question to ask is: ``How large can the term $\sum_{i = 0}^{k-1} \beta_{i,k} \frac{\gamma_{i}}{2} ||x - v_{i}||^2$ become?" To answer this question, we note that the simplest way to guarantee that the necessary condition for Lemma \ref{SFGM_lemma_1} holds, is to restrict $\Phi_{k} (x)$ to be convex $\forall k = 0, 1, \ldots$. Therefore, utilizing the second order condition of convexity, we must have $\nabla^2 \Phi_{k}(x) \geq 0$. This implies that: 
	\begin{align}
		\label{psi_bound}
		\sum_{i = 0}^{k-1} \beta_{i,k} \gamma_{i} \leq \gamma_{k}.
	\end{align}
	
	\noindent Furthermore, in \eqref{def_1}, we also restrict the difference of functions $f(x) - \psi_k (x)$ to be convex for all $k = 0, 1, \ldots$. Since both functions are (by assumption) differentiable, from the second-order condition of convexity, it is sufficient to ensure that $\nabla^2 \left(f(x) - \psi_k (x)\right) \geq 0$. This results in 
	\begin{align}
		\label{second_part}
		\sum_{i = 0}^{k-1} \beta_{i,k} \gamma_{i} \leq \mu.
	\end{align}
	Combining \eqref{second_part} with \eqref{psi_bound}, we reach
	\begin{align}
		\label{psi_bound_}
		\sum_{i = 0}^{k-1} \beta_{i,k} \gamma_{i} \leq \min \left(\gamma_{k}, \mu\right).
	\end{align}

	Let us now analyze the minimal values that the terms in the sequence $\{\Phi_k (x)\}_{k=0}^\infty$ can have. First, define $x_{\Phi_k}^* \triangleq \arg \min_x \Phi_k (x)$. Then, utilizing \eqref{scan_funct_k_SFGM} for all values $k = 0, 1, \ldots$, we can write 
	\begin{align}
		\label{tko}
		\Phi_k^* = \min_x \Phi_k(x)  = \phi_k^*  + \frac{\gamma_{k}}{2}||x_{\Phi_k}^* - v_{k}||^2 - \sum_{i = 0}^{k-1} \beta_{i,k} \frac{\gamma_{i}}{2} ||x_{\Phi_k}^* - v_{i}||^2 .
	\end{align}
	Note that the coefficients $\phi_{k}^*, \gamma_{k}$ and $v_{k}$ are unknown and need to be found. Thus, the following lemma is in order.
	\begin{lemma}
		\label{SFGM_lemma_3}
		Let the coefficients $\beta_{i,k}$ be selected in a way that \eqref{psi_bound_} is satisfied, and let $\Phi_{0} (x) = \phi_{0}^* + \frac{\gamma_{0}}{2} ||x - v_{0}||^2$. Then, the process defined in Lemma \ref{SFGM_lemma_2} preserves the quadratic canonical structure of the scanning function introduced in \eqref{scan_funct_k_SFGM}. Moreover, the sequences $\{\gamma_{k}\}_{k=0}^\infty$, $\{v_{k}\}_{k=0}^\infty$ and $\{\phi_{k}^*\}_{k=0}^\infty$ can be computed as given by
		\begin{align}
			\label{gamma_expr}
			\gamma_{k+1} &= (1-\alpha_k)\gamma_{k} + \alpha_k \left(\mu + \sum_{i = 0}^{k - 1} \beta_{i,k} \gamma_{i} \right), \\ 
			\label{v_value}
			v_{k+1}  &= \frac{1}{\gamma_{k+1}}\left((1-\alpha_k)\gamma_k v_{k} + \mu \alpha_k \left(y_k - \frac{1}{\mu}\nabla f(y_k) + \sum_{i = 0}^{k - 1} \frac{\beta_{i,k} \gamma_{i}}{\mu} v_{i}\right)\right), \\ \nonumber
			\phi_{k+1}^*  &= \alpha_k f(y_k)  +  (1-\alpha_k) \phi_{k}^* +  \frac{\alpha_{k} \gamma_{k}(1-\alpha_{k}) (\mu + \sum_{i = 1}^{k-1} \beta_{i,k} \gamma_{i})}{2\gamma_{k+1}} ||y_k - v_{k}||^2 \\ \nonumber &+ \frac{\alpha_k^3}{\gamma_{k+1}} \sum_{i = 0}^{k - 1} \beta_{i,k} \gamma_{i} ||v_{i} - y_k|| \; ||\nabla f(y_k)|| + (1-\alpha_k) \frac{\gamma_{k}}{2}||x_{\Phi_k}^* \! - \! v_{k}||^2 \\ \nonumber &- \frac{\alpha_k^2 ||\nabla f(y_k) ||^2}{2 \gamma_{k+1}} + \frac{\alpha_k (1-\alpha_k)\gamma_k}{\gamma_{k+1}} \left( \left(v_{k} - y_k\right)^T \nabla f(y_k) + \sum_{i = 0}^{k - 1} \beta_{i,k} \gamma_{i} ||y_k - v_{i}|| ||y_k - v_{k}||\right)  \\ &+ \! \alpha_{k} \! \sum_{i = 0}^{k-1} \! \frac{\beta_{i,k} \gamma_{i}}{2} ||y_k \! - \! v_i||^2 \! + \! \frac{\left(1 \! - \! \alpha_{k}\right)\alpha_k^2}{\gamma_{k+1}} \sum_{i = 0}^{k - 1} \beta_{i,k} \gamma_{i} (v_{i} \! - \! y_k) ^T \nabla f(y_k) \! + \! \sum_{i = 0}^{k-1} \! \beta_{i,k} \frac{\gamma_{i}}{2} ||x_{\Phi_k}^* \! - \! v_{i}||^2 \label{psi_{k+1}^*}.
		\end{align}
	\end{lemma}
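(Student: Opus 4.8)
The plan is to argue by induction on $k$, exploiting the decomposition $\Phi_k(x) = \phi_k(x) - \psi_k(x)$ together with the recursion \eqref{phi_k+1_SFGM}. The base case $k = 0$ is immediate: by hypothesis $\Phi_0(x) = \phi_0^* + \frac{\gamma_0}{2}||x - v_0||^2$ and $\psi_0(x) = 0$, so \eqref{scan_funct_k_SFGM} holds with an empty memory sum. For the inductive step, the first move I would make is to observe that the bracket $\Phi_k(x) + \psi_k(x)$ in \eqref{phi_k+1_SFGM} is exactly $\phi_k(x)$; this collapses the recursion into a relation for $\phi_{k+1}(x) \triangleq \Phi_{k+1}(x) + \psi_{k+1}(x)$, expressing it as the sum of $(1 - \alpha_k)\phi_k(x)$, $\alpha_k \psi_k(x)$, the model $\alpha_k\left(f(y_k) + \nabla f(y_k)^T(x - y_k) + \frac{\mu}{2}||x - y_k||^2\right)$, and the constant $-\Psi_k$.

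Structure preservation then follows from a single observation: each summand above is a quadratic whose Hessian is a non-negative multiple of the identity ($\gamma_k I$ from $\phi_k$, $\beta_{i,k}\gamma_i I$ from the $i$-th memory term, and $\mu I$ from the model), so their sum is again an isotropic quadratic $\phi_{k+1}^* + \frac{\gamma_{k+1}}{2}||x - v_{k+1}||^2$. Reading off its curvature gives \eqref{gamma_expr}, and solving the first-order condition $\nabla \phi_{k+1}(v_{k+1}) = 0$ identifies the new center $v_{k+1}$ as the Hessian-weighted average of $v_k$, the memory centers $\{v_i\}$, and the gradient-step point $y_k - \frac{1}{\mu}\nabla f(y_k)$, which is precisely \eqref{v_value}. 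Subtracting $\psi_{k+1}(x)$ restores the canonical form \eqref{scan_funct_k_SFGM} at index $k+1$, and the hypothesis $\psi_k \geq 0$ together with \eqref{psi_bound_} keeps the relevant quadratics convex so that all minimizers are well defined and $\gamma_{k+1} > 0$.

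The remaining and by far most laborious task is to evaluate $\phi_{k+1}^* = \phi_{k+1}(v_{k+1})$, which I expect to be the main obstacle. I would substitute the closed form \eqref{v_value} into the three squared distances $||v_{k+1} - v_k||^2$, $||v_{k+1} - v_i||^2$, and $||v_{k+1} - y_k||^2$ and expand. The difficulty is purely one of bookkeeping: the memory centers $v_i$ spawn many pairwise cross terms such as $(v_i - y_k)^T \nabla f(y_k)$, $(v_k - y_k)^T \nabla f(y_k)$, and $(y_k - v_i)^T(y_k - v_k)$. Those that cannot be retained in closed form I would bound by the Cauchy--Schwarz inequality $u^T w \leq ||u||\,||w||$, which is what produces the products of norms in \eqref{psi_{k+1}^*} and makes that expression an upper estimate of $\phi_{k+1}^*$ --- all that the subsequent convergence analysis needs. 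Finally, the constant term together with $-\Psi_k$ is rewritten using \eqref{tko} (the value of $\Phi_k^*$ attained at $x_{\Phi_k}^*$) and the definition \eqref{Psi_definition} of $\Psi_k$, which is what introduces the distances $||x_{\Phi_k}^* - v_k||^2$ and $||x_{\Phi_k}^* - v_i||^2$ and aligns the recursion with the quantity $\Phi_k^*$ actually tracked in Lemma \ref{SFGM_lemma_1}. Regrouping by the coefficients $\alpha_k$, $1 - \alpha_k$, and $1/\gamma_{k+1}$ then yields \eqref{psi_{k+1}^*} and closes the induction.
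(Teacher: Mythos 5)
Your proposal is correct in outline and, for the first two claims, coincides with the paper's argument: the Hessian computation that yields \eqref{gamma_expr} and preserves the quadratic structure is exactly the paper's \eqref{tt}--\eqref{ttt}, and the first-order optimality condition identifying $v_{k+1}$ is exactly \eqref{SFGM_opt_cond}--\eqref{vvv} (your observation that $\Phi_k+\psi_k=\phi_k$ collapses the recursion is a slightly cleaner way of saying the same thing). For $\phi_{k+1}^*$ you take a mildly different route: you evaluate the quadratic at its minimizer $v_{k+1}$, whereas the paper introduces an auxiliary sequence $\Theta_k(\cdot)$ with the same centers and radii and evaluates the recursion at $x=y_k$, where the linear term $\nabla f(y_k)^T(x-y_k)$ and the $\tfrac{\mu}{2}\|x-y_k\|^2$ term vanish, so that the only difference vector needing expansion is $v_{k+1}-y_k$ via \eqref{ggg}--\eqref{fgm_eq_1}; your choice spawns the extra cross term $\nabla f(y_k)^T(v_{k+1}-y_k)$ but produces the same family of terms, so this really is bookkeeping. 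Two points where your account does not match the paper. First, the terms $\|x_{\Phi_k}^*-v_k\|^2$ and $\|x_{\Phi_k}^*-v_i\|^2$ are \emph{not} obtained by rewriting the constant and $-\Psi_k$ via \eqref{tko} ($\Psi_k$ is a supremum of the $\psi_m$ over $x$ and has nothing to do with the minimizer $x_{\Phi_k}^*$); in the paper they are simply added as nonnegative slack in the relaxation from \eqref{fgm_eq_4} to \eqref{fgm_eq_5}, precisely so that the later induction in \eqref{sfgm_eq_6} can reassemble $\Phi_k^*\ge f(x_k)$. Second, be careful with the claim that an upper estimate of $\phi_{k+1}^*$ is all the convergence analysis needs: the subsequent induction requires the lower bound $\Phi_{k+1}^*\ge f(x_{k+1})$, so \eqref{psi_{k+1}^*} functions as the \emph{assignment} of $\phi_{k+1}^*$ (the paper explicitly ``lets'' $\theta_{k+1}^*$ equal its tightest analytic upper bound) rather than as a bound on the true minimum; your derivation reaches the same expression only if you reproduce the paper's particular Cauchy--Schwarz and slack choices, which your regrouping step glosses over but does not invalidate.
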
 
	\begin{proof}
		See Appendix \ref{Proof of Lemma 3}. 
	\end{proof}
	
	Now, we can utilize an inductive argument to construct the algorithm. Assume that at iteration $k$, we have 
	\begin{align}
		\Phi_{k}^* \! \stackrel{\eqref{tko}}{=} \! \phi_k^* \! + \! \frac{\gamma_{k}}{2}||x_{\Phi_k}^* \! - \! v_{k}||^2 \! - \! \sum_{i = 0}^{k-1} \! \beta_{i,k} \frac{\gamma_{i}}{2} ||x_{\Phi_k}^* \! - \! v_{i}||^2 \! \geq \! f(x_k).
	\end{align}
	Then, from Lemma~\ref{SFGM_lemma_3}, at iteration $k+1$ we obtain \eqref{sfgm_eq_6} shown at the next page. From \eqref{sfgm_eq_6}, utilizing the lower bound \eqref{lower_bound} on $f(x_k)$ we arrive to
	\begin{align}
		\nonumber
		\phi_{k+1}^* \!  &\geq \! \alpha_k f(y_k) \! + \! (1 \! - \! \alpha_k) f(x_k) \! + \! \frac{\alpha_{k} \gamma_{k}(1 \! - \! \alpha_{k}) (\mu \! + \! \sum_{i = 1}^{k-1} \beta_{i,k} \gamma_{i})}{2\gamma_{k+1}} ||y_k \! - \! v_{k}||^2 \\ \nonumber &+ \alpha_{k} \sum_{i = 0}^{k-1} \frac{\beta_{i,k} \gamma_{i}}{2} ||y_k - v_{i} ||^2 - \frac{\alpha_k^2}{2 \gamma_{k+1}} ||\nabla f(y_k) ||^2 + \sum_{i = 0}^{k-1} \beta_{i,k} \frac{\gamma_{i}}{2} ||x_{\Phi_k}^* - v_{i}||^2 \\ \nonumber &+   \frac{\left(1 - \alpha_{k}\right)\alpha_k^2}{\gamma_{k+1}} \sum_{i = 0}^{k - 1} \beta_{i,k} \gamma_{i} (v_{i} - y_k) ^T \nabla f(y_k) + \frac{\alpha_k^3}{\gamma_{k+1}} \sum_{i = 0}^{k - 1}  \beta_{i,k} \gamma_{i} ||v_{i} - y_k|| \; ||\nabla f(y_k)||  \\ &+\frac{\alpha_k (1-\alpha_k)\gamma_k}{\gamma_{k+1}} \left((v_{k} - y_k)^T\nabla f(y_k) + \sum_{i = 0}^{k - 1} \beta_{i,k} \gamma_{i} ||y_k - v_{i}|| \; ||y_k - v_{k}|| \right). \label{sfgm_eq_6} 
	\end{align}
	Substituting the lower bound \eqref{lower_bound} into \eqref{sfgm_eq_6}, we obtain
	\begin{align}
		\nonumber
		\phi_{k+1}^*  &\geq \alpha_k f(y_k) \! + \! (1 \! - \! \alpha_k) \left( \! f(y_k) \! + \!  \nabla f(y_k)^T (x_k \! - \! y_k) \! + \! \frac{\mu}{2} ||y_k \! - \! x_k||^2 \! \right) \! + \! \alpha_{k} \sum_{i = 0}^{k-1} \frac{\beta_{i,k} \gamma_{i}}{2} ||y_k - v_{i} ||^2 \\ \nonumber &- \frac{\alpha_k^2}{2 \gamma_{k+1}} ||\nabla f(y_k) ||^2 + \! \frac{\alpha_{k} \gamma_{k}(1 \! - \! \alpha_{k}) (\mu \! + \! \sum_{i = 1}^{k-1} \beta_{i,k} \gamma_{i})}{2\gamma_{k+1}} ||y_k \! - v_{k}||^2  \\ \nonumber &+ \frac{\alpha_k^3}{\gamma_{k+1}} \sum_{i = 0}^{k - 1} \beta_{i,k} \gamma_{i} ||v_{i} - y_k|| \; ||\nabla f(y_k)|| + \sum_{i = 0}^{k-1} \beta_{i,k} \frac{\gamma_{i}}{2} ||x_{\Phi_k}^* - v_{i}||^2 \\ \nonumber &+  \frac{\left(1 - \alpha_{k}\right)\alpha_k^2}{\gamma_{k+1}} \sum_{i = 0}^{k - 1} \beta_{i,k} \gamma_{i} (v_{i} - y_k) ^T \nabla f(y_k) \\ \label{sfgm_eq_7} &+ \frac{\alpha_k (1-\alpha_k)\gamma_k}{\gamma_{k+1}} \left( \! (v_{k} - y_k)^T\nabla f(y_k) + \sum_{i = 0}^{k - 1} \beta_{i,k} \gamma_{i} ||y_k - v_{i}|| \; ||y_k - v_{k}|| \! \right) \! .  
	\end{align}

	From \eqref{sfgm_eq_7}, we discard all the positive terms and relax the lower bound. This results in
	\begin{align}
		\nonumber
		\phi_{k+1}^* \!  &\geq \! f(y_k) \! + \! (1 \! - \! \alpha_k) \nabla f(y_k)^T \! (x_k \! - \! y_k) \! - \! \frac{\alpha_k^2}{2 \gamma_{k+1}} ||\nabla \! f(y_k) ||^2   + \! \frac{\alpha_k (1 \! - \! \alpha_k)\gamma_k}{\gamma_{k+1}}(v_{k} \! - \! y_k)^T \! \nabla f(y_k) \!  \\ &+ \sum_{i = 0}^{k-1} \frac{\beta_{i,k} \gamma_{i}}{2} ||x_{\Phi_k}^*  - v_{i}||^2 + \left(1 - \alpha_{k}\right) \frac{\alpha_k^2}{\gamma_{k+1}} \sum_{i = 0}^{k - 1} \beta_{i,k} \gamma_{i} (v_{i} - y_k) ^T \nabla f(y_k). \label{pick_y_seq_}
	\end{align}
	For Lemma \ref{SFGM_lemma_1} to be valid, we must guarantee that $\Phi_{k+1}^* \geq f(x_{k+1})$. Observe that by adding $\frac{\gamma_{k}}{2}||x_{\Phi_k}^* \! - \! v_{k}||^2$ to the left-hand side (LHS) of \eqref{pick_y_seq_}, we have 
	\begin{align}
		\nonumber
		\phi_{k+1}^* + \frac{\gamma_{k}}{2}||x_{\Phi_k}^* \! - \! v_{k}||^2 - \sum_{i = 0}^{k-1} \frac{\beta_{i,k} \gamma_{i}}{2} ||x_{\Phi_k}^* - v_{i}||^2 \stackrel{\eqref{tko}}{=} \Phi_{k+1}^*.
	\end{align}
	This yields 
	\begin{align}
		\nonumber
		\Phi_{k+1}^* \!  &\geq \! f(y_k) \! + \! (1 \! - \! \alpha_k) \nabla f(y_k)^T \! (x_k \! - \! y_k) \! - \! \frac{\alpha_k^2}{2 \gamma_{k+1}} ||\nabla \! f(y_k) ||^2  + \frac{\alpha_k (1-\alpha_k)\gamma_k}{\gamma_{k+1}}(v_{k} - y_k)^T\nabla f(y_k) \\ &+ \left(1 - \alpha_{k}\right) \frac{\alpha_k^2}{\gamma_{k+1}} \sum_{i = 0}^{k - 1} \beta_{i,k} \gamma_{i} (v_{i} - y_k) ^T \nabla f(y_k). 		\label{pick_y_seq}
	\end{align}
	Moreover, we remark that the term $f(x_{k+1})$ can be obtained from \eqref{pick_y_seq} in several ways. Here, we choose to relax the lower bound even further by using the following form 
	\begin{align}
		\label{fgm_eq_7}
		f(y_k) - \frac{1}{2 L} ||\nabla f(y_k) ||^2 \geq f(x_{k+1}),
	\end{align}
	which can be guaranteed by a simple gradient descent step on $y_k$, that is,
	\begin{align}
		x_{k+1} = y_k - h_k \nabla f(y_k),
	\end{align}
	where, as can be seen from \eqref{upper_bound}, it suffices to let $h_k = \frac{1}{L}$. Therefore, we can compute $\alpha_{k}$ to have $\frac{1}{2L}$ as the coefficient for $||\nabla f(y_k)||^2$ in \eqref{pick_y_seq}. This results in:
	\begin{align}
		\label{alpha_k_intuition}
		\alpha_k = \sqrt{\frac{\gamma_{k+1}}{L}}.
	\end{align}
	Then, utilizing the recursive relation for $\gamma_{k+1}$ given in \eqref{gamma_expr}, its value can be computed in closed form by solving the quadratic equation as 
	\begin{align}
		\label{alpha_k_SFGM}
		\alpha_{k} &= \frac{\left(\mu + \sum_{i = 1}^{k-1} \beta_{i,k} \gamma_{i} - \gamma_{k}\right)}{2L} + \frac{\sqrt{\left(\mu + \sum_{i = 1}^{k-1} \beta_{i,k} \gamma_{i} - \gamma_{k}\right)^2 + 4L \gamma_{k}}}{2L}.
	\end{align}
	Making the above-mentioned selection for $\alpha_{k}$, we can now re-write \eqref{pick_y_seq} as
	\begin{align}
		\label{compute_y_k}
		\begin{split}
			\Phi_{k+1}^* \! &\stackrel{}{\geq} \! f (x_{k+1}) + (1-\alpha_k)  \nabla f(y_k)^T \!  \left( (x_k - y_k) + \frac{\alpha_k \gamma_{k}}{\gamma_{k+1}} (v_k - y_k)  + \frac{\alpha_k^2}{\gamma_{k+1}} \sum_{i = 0}^{k - 1} \beta_{i,k} \gamma_{i} (v_{i} - y_k) \right).
		\end{split}
	\end{align} 
	
	From \eqref{compute_y_k}, we can observe an important result from the computational point of view. It is the fact that the sequence of points $\{y_k\}_{k=0}^\infty$ ``comes for free'', in the sense that the points can be computed without the need to query a first-order oracle at point $x_k$. To obtain the update rule for the sequence $\{y_k\}_{k=0}^\infty$ it suffices to let 
	\begin{align} 
		\nonumber 
		x_k - y_k + \frac{\alpha_k \gamma_{k}}{\gamma_{k+1}} (v_k - y_k) + \frac{\alpha_k^2}{\gamma_{k+1}} \sum_{i = 0}^{k - 1} \! \beta_{i,k} \gamma_{i} (v_{i} - y_k) &= 0,
	\end{align} 
	which yields
	\begin{align} \label{y_k}
		y_k &=  \frac{\gamma_{k+1} x_k + \alpha_k \gamma_{k} v_{k} + \alpha_k^2 \sum_{i = 0}^{k - 1} \beta_{i,k} \gamma_{i} v_{i} }{\gamma_{k+1} + \alpha_k \gamma_{k} + \alpha_k^2 \sum_{i = 0}^{k - 1} \beta_{i,k} \gamma_{i}}. 
	\end{align}
	The closed-form expression for the points $y_k$ obtained in \eqref{y_k} again highlights the benefits of utilizing the generalized estimating sequence construction. Notice that the result of FGM is preserved, and the other terms come up as coefficients of the term $\alpha_k^2$. If we set $\beta_{i,k} = 0,$ $\forall i = 0, 1, \ldots, k-1$, i.e., $\psi_k (x) = 0$, then FGM is recovered. 
	
	Assuming that the coefficients $\beta_{i,k}$ are selected to comply with \eqref{psi_bound_}, we come to Algorithm \ref{SFGM_algorithm}. Comparing our proposed method with \cite[(2.2.19)]{Nesterov_book}, we first note that the selection of the next iterate is done in the same way in both algorithms. The reason for this update stems from the fact that both methods use \eqref{fgm_eq_7} to compute $x_{k+1}$. Moreover, both methods can be utilized in conjunction with many stopping criteria, such as a bound on the maximum number of iterations, norm of the gradient, etc. A similar type of update rule is also applied for the terms $\alpha_k$ and $\gamma_k$. Evidently, in this case both methods reflect the different types of estimating sequences that were used in constructing them. The computation of the points $y_k$ shares the same structure in both algorithms. In Algorithm~\ref{SFGM_algorithm}, the extra terms contributed from the generalized estimating sequence come up as coefficients of $\alpha_{k}^2$. The extra terms also appear in the update rule for $v_{k+1}$. Lastly, we emphasize that if we set the term $\psi_k(x) = 0$, \; $\forall k = 0, 1, \ldots$, then Algorithm~\ref{SFGM_algorithm} reduces to the regular FGM. This is consistent with the fact that the estimating sequences utilized in constructing FGM are a special case of the generalized estimating sequences that we used in constructing Algorithm~\ref{SFGM_algorithm}.
	
	\begin{algorithm}
		\caption{Proposed Method}
		\label{SFGM_algorithm}
		\begin{algorithmic}
			\STATE{Choose $x_0 \in \mathcal{R}^n$, set $\gamma_{0} = 0$ and $v_0 = x_0$.}
			\WHILE{stopping criterion is not met}
			\STATE{Compute $\alpha_{k} \in [0, 1]$ as $
				\alpha_{k} \! = \! \frac{ \! \left(\mu + \sum_{i = 1}^{k-1} \beta_{i,k} \gamma_{i} - \gamma_{k} \! \right) + \sqrt{\left(\mu + \sum_{i = 1}^{k-1} \beta_{i,k} \gamma_{i} - \gamma_{k}\right)^2 + 4L \gamma_{k}}}{2L} \! .$}			
			\STATE{Set $\gamma_{k + 1} = (1-\alpha_k)\gamma_{k} + \alpha_k \left(\mu + \sum_{i = 0}^{k - 1} \beta_{i,k} \gamma_{i} \right).$} 			
			\STATE{Choose $y_k = \frac{\gamma_{k+1} x_k + \alpha_k \gamma_{k} v_{k} + \alpha_k^2 \sum_{i = 0}^{k - 1} \beta_{i,k} \gamma_{i} v_{i} }{\gamma_{k+1} + \alpha_k \gamma_{k} + \alpha_k^2 \sum_{i = 0}^{k - 1} \beta_{i,k} \gamma_{i}}.$}
			\STATE{Set $
				x_{k+1} = y_k - \frac{1}{L} \nabla f(y_k)$}
			\STATE{Set $v_{k+1} = \frac{1}{\gamma_{k+1}}\bigg((1-\alpha_k)\gamma_k v_{k}$ \\ $\quad \quad \quad \quad + \mu \alpha_k \! \left( \! y_k \! - \! \frac{1}{\mu}\nabla f(y_k) + \sum_{i = 0}^{k - 1} \frac{\beta_{i,k} \gamma_{i}}{\mu} v_{i} \! \right) \! \bigg) \! .$}
			\ENDWHILE
		\end{algorithmic} 
	\end{algorithm} 
	
	%%%%%%%%%%%%%%%%%%%%%%%%%%%%%%%%%%%%%%%%%%%%%%%%%%%%%%%%%%%%%%%%%%%%
	\section{Convergence analysis}
	\label{Convergence analysis}
	%%%%%%%%%%%%%%%%%%%%%%%%%%%%%%%%%%%%%%%%%%%%%%%%%%%%%%%%%%%%%%%%%%%%
	As can be anticipated from Lemma~\ref{SFGM_lemma_1}, the convergence rate of Algorithm~\ref{SFGM_algorithm} will depend on both the $\{\lambda_k\}_{k=0}^\infty$ and $\{\psi_k (x)\}_{k=0}^\infty$ sequences. The following theorem makes this statement precise and allows us to characterize the convergence rate of Algorithm~\ref{SFGM_algorithm}.
	\begin{theorem}
		\label{conv_analysis_t_1}
		Let $\lambda_0 = 1$ and $\lambda_k = \prod_{i = 0}^{k-1} \left(1 - \alpha_i \right)$. Then, Algorithm~\ref{SFGM_algorithm} generates a sequence of points $\{x_k\}_{k = 0}^\infty$ such that
		\begin{align}
			f(x_k) - f^* &\leq \lambda_{k} \left[ f(x_0) - f(x^*) + \frac{\gamma_{0}}{2}||x_0 - x^*||^2 \right] - (1 - \lambda_k)\psi_k(x^*).
		\end{align} 
		\begin{proof} 
			See Appendix \ref{Proof of Theorem 1}.
		\end{proof}
	\end{theorem}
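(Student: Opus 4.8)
The plan is to recognize that the theorem is essentially Lemma~\ref{SFGM_lemma_1} specialized to the concrete construction of Section~\ref{SFGM}, once two facts are in place: that the hypothesis $f(x_k) \leq \Phi_k^*$ of Lemma~\ref{SFGM_lemma_1} holds for every $k$, and that $\Phi_0(x^*)$ can be written explicitly in terms of $f(x_0)$, $\gamma_0$ and $\|x_0 - x^*\|^2$. Granting these, substituting into the conclusion of Lemma~\ref{SFGM_lemma_1} immediately delivers the claimed bound. I would also note at the outset that the recursion $\lambda_{k+1} = (1-\alpha_k)\lambda_k$ from \eqref{lambda_recursive}, together with $\lambda_0 = 1$, gives the closed form $\lambda_k = \prod_{i=0}^{k-1}(1-\alpha_i)$ assumed in the statement, so Lemma~\ref{SFGM_lemma_2} supplies that ingredient for free.

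The core of the argument is an induction establishing $f(x_k) \leq \Phi_k^*$. For the base case, the standard initialization $\phi_0^* = f(x_0)$, $v_0 = x_0$ together with $\psi_0 \equiv 0$ gives $\Phi_0^* = \phi_0^* = f(x_0)$ (the quadratic term vanishes at the minimizer for any $\gamma_0$), so the claim holds with equality at $k=0$. For the inductive step I would reuse verbatim the chain of inequalities already assembled in Section~\ref{SFGM}: substituting the hypothesis $\Phi_k^* \geq f(x_k)$ into the expression for $\phi_{k+1}^*$ from Lemma~\ref{SFGM_lemma_3} produces \eqref{sfgm_eq_6}; applying the strong-convexity lower bound \eqref{lower_bound} and discarding the nonnegative terms gives \eqref{pick_y_seq_}; recombining via \eqref{tko} recovers $\Phi_{k+1}^*$ on the left and yields \eqref{pick_y_seq}; and finally the descent inequality \eqref{fgm_eq_7}, valid for the step $x_{k+1} = y_k - \tfrac{1}{L}\nabla f(y_k)$, together with the calibrated choice $\alpha_k = \sqrt{\gamma_{k+1}/L}$ from \eqref{alpha_k_intuition}, brings the bound to the form \eqref{compute_y_k}. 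The choice of $y_k$ in \eqref{y_k} is exactly what sets the bracketed vector multiplying $(1-\alpha_k)\nabla f(y_k)^T$ to zero, so the residual first-order term disappears and \eqref{compute_y_k} collapses to $\Phi_{k+1}^* \geq f(x_{k+1})$, closing the induction.

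With the hypothesis verified, I would finish by evaluating $\Phi_0(x^*)$. Since $\psi_0 \equiv 0$, the canonical form \eqref{scan_funct_k_SFGM} reduces at $k=0$ to $\Phi_0(x) = \phi_0^* + \tfrac{\gamma_0}{2}\|x - v_0\|^2$; plugging in $\phi_0^* = f(x_0)$ and $v_0 = x_0$ and evaluating at $x^*$ gives $\Phi_0(x^*) = f(x_0) + \tfrac{\gamma_0}{2}\|x_0 - x^*\|^2$. Substituting this and the closed form for $\lambda_k$ into the conclusion of Lemma~\ref{SFGM_lemma_1} produces exactly the stated inequality.

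I expect the inductive step to be the main obstacle, and within it the delicate point is twofold. First, one must certify that every term discarded in passing from \eqref{sfgm_eq_7} to \eqref{pick_y_seq_} is genuinely nonnegative, which is precisely where the convexity constraints \eqref{psi_bound} and \eqref{second_part} on the memory weights (and hence their consequence \eqref{psi_bound_}) are truly needed. Second, one must verify that the particular $\alpha_k$ of \eqref{alpha_k_intuition} and the particular $y_k$ of \eqref{y_k} are simultaneously the correct choices --- the former to match the $\tfrac{1}{2L}\|\nabla f(y_k)\|^2$ coefficient demanded by \eqref{fgm_eq_7}, and the latter to annihilate the leftover inner-product term in \eqref{compute_y_k}. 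Everything else is bookkeeping that follows the FGM template, with the heavy-ball memory terms carried along merely as coefficients of $\alpha_k^2$.
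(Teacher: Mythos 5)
Your proposal is correct and follows essentially the same route as the paper's own (much terser) proof: the paper likewise sets $\phi_0^* = f(x_0)$ so that $\Phi_0(x^*) = f(x_0) + \tfrac{\gamma_0}{2}\|x_0 - x^*\|^2$, invokes the fact that the update rules were constructed precisely to maintain $f(x_k) \leq \Phi_k^*$ (the inductive chain of Section~\ref{SFGM} that you spell out explicitly), and then applies Lemma~\ref{SFGM_lemma_1}. Your version merely makes the induction and its delicate points (nonnegativity of the discarded terms, the calibration of $\alpha_k$ and $y_k$) explicit rather than deferring them to the body of the paper.
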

	Comparing the result presented in Theorem~\ref{conv_analysis_t_1} to \cite[Theorem 2.2.1]{Nesterov_book}, we observe that as long as $\psi_k > 0$, we should expect Algorithm~\ref{SFGM_algorithm} to yield a faster convergence to optimality than the one exhibited by FGM. For this reason, we will refer to our proposed Algorithm~\ref{SFGM_algorithm} as SuperFGM (SFGM). 
	
	To analyse the rate of convergence, we start by computing the rate at which the sequence $\{\lambda_{k}\}_{k=0}^\infty$ decreases. The following lemma is in order. 
	\begin{lemma}
		\label{conv_analysis_lemma_1}
		For all $k \geq 0$, Algorithm~\ref{SFGM_algorithm} guarantees that
		\begin{align}
			\label{FGM_conv_eq_66} 
			\lambda_{k} \! &\leq \! \frac{2 \mu}{L \! \left( \! e^{\frac{k + 1}{2} \sqrt{\frac{\mu + \sum_{i = 1}^{k-1} \beta_{i,k} \gamma_i}{L}}} \! - e^{-\frac{k + 1}{2} \sqrt{\frac{\mu + \sum_{i = 1}^{k-1} \beta_{i,k} \gamma_i}{L}}} \! \right)^2} \leq \frac{2 \mu}{\left(\mu + \sum_{i = 1}^{k-1} \beta_{i,k} \gamma_i\right) (k+1)^2}.
		\end{align} 
	\end{lemma}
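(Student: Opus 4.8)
The plan is to work directly with the scalar recursions produced by Algorithm~\ref{SFGM_algorithm} and recorded in Lemma~\ref{SFGM_lemma_3}: namely $\lambda_0 = 1$ with $\lambda_{k+1} = (1-\alpha_k)\lambda_k$ from \eqref{lambda_recursive}, the identity $\gamma_{k+1} = L\alpha_k^2$ coming from \eqref{alpha_k_intuition}, and the update $\gamma_{k+1} = (1-\alpha_k)\gamma_k + \alpha_k\bigl(\mu + \sum_{i=1}^{k-1}\beta_{i,k}\gamma_i\bigr)$ from \eqref{gamma_expr}, together with the initialization $\gamma_0 = 0$ (which also reconciles the index ranges $\sum_{i=0}^{k-1}$ and $\sum_{i=1}^{k-1}$). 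Writing $\mu_k := \mu + \sum_{i=1}^{k-1}\beta_{i,k}\gamma_i$ and $q_k := \mu_k/L$, I would first dispose of the rightmost inequality in \eqref{FGM_conv_eq_66}, which is immediate: since $\sinh t \ge t$ for $t \ge 0$, one has $\bigl(e^{x}-e^{-x}\bigr)^2 = 4\sinh^2 x \ge 4x^2$ with $x = \tfrac{k+1}{2}\sqrt{q_k}$, and $4x^2 = (k+1)^2 q_k$ turns the middle expression into the right-hand bound. Hence the real content is the leftmost inequality, which after clearing constants is equivalent to the lower bound $1/\sqrt{\lambda_k} \ge \sqrt{2L/\mu}\,\sinh\!\bigl(\tfrac{k+1}{2}\sqrt{q_k}\bigr)$.

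To establish this I would mirror the argument behind the classical estimate \cite[Lemma~2.2.4]{Nesterov_book}, but carry the memory term throughout. Setting $u_k := 1/\sqrt{\lambda_k}$, the recursion \eqref{lambda_recursive} gives $u_{k+1} = u_k/\sqrt{1-\alpha_k}$, and the elementary inequality $1/\sqrt{1-\alpha}\ge 1+\alpha/2$ yields the one-step growth $u_{k+1} \ge u_k\bigl(1 + \tfrac{\alpha_k}{2}\bigr)$. The crux is then a lower bound on $\alpha_k$ expressed through $\mu_k$. For this I would use the convexity constraint $\sum_{i=1}^{k-1}\beta_{i,k}\gamma_i \le \gamma_k$ of \eqref{psi_bound}, i.e. $\gamma_k \ge \mu_k - \mu$, inside \eqref{gamma_expr} to get $\gamma_{k+1} \ge \mu_k - \mu(1-\alpha_k)$; combined with $\gamma_{k+1} = L\alpha_k^2$ this gives $L\alpha_k^2 - \mu\alpha_k - (\mu_k-\mu) \ge 0$, whose positive root is an explicit lower bound for $\alpha_k$ in terms of $\mu_k$ and $\mu$. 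The resulting growth of $u_k$ is then matched, by induction on $k$, to the hyperbolic sequence $\sqrt{2L/\mu}\,\sinh\!\bigl(\tfrac{k+1}{2}\sqrt{q_k}\bigr)$, whose consecutive-term ratio is controlled through the addition formula $\sinh((k+2)c) = \sinh((k+1)c)\cosh c + \cosh((k+1)c)\sinh c$ with $c = \tfrac12\sqrt{q_k}$. The base case $k=0$ is exactly where $\gamma_0 = 0$ and $\lambda_0 = 1$ enter: one verifies $u_0 = 1 \ge \sqrt{2L/\mu}\,\sinh\!\bigl(\tfrac12\sqrt{q_0}\bigr)$ directly, and it is this normalization that fixes the constant $2\mu/L$ (and, downstream, the additive $\ln 5$ in the iteration-complexity estimate).

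The step I expect to be the main obstacle is reconciling the memory term $\mu_k$, which varies with $k$ because the weights $\beta_{i,k}$ are allowed to change across iterations, with a clean closed-form hyperbolic comparison. For ordinary FGM ($\beta_{i,k}\equiv 0$) one has the exact identity $\gamma_k = \mu(1-\lambda_k)$ that drives the whole estimate; once the memory term is switched on this identity is lost, and the telescoping that produces it must be replaced by the two-sided control $0 \le \sum_{i=1}^{k-1}\beta_{i,k}\gamma_i \le \min(\gamma_k,\mu)$ of \eqref{psi_bound_}. The delicate point is that a larger $\mu_k$ makes the claimed upper bound on $\lambda_k$ \emph{smaller}, so the proof must show this is always compensated: by \eqref{psi_bound} a large memory term forces $\gamma_k$, and hence $\alpha_k$, to be correspondingly large, which is precisely what accelerates the decay of $\lambda_k$ and keeps the induction closed with $q_k$ evaluated at the current step. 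Making this compensation quantitative and uniform in $k$, rather than merely asymptotic, is the technical heart of the argument; everything else reduces to the routine scalar estimates $\sinh t \ge t$, $1/\sqrt{1-\alpha}\ge 1+\alpha/2$, and monotonicity of $\sinh$.
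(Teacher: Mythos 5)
Your treatment of the rightmost inequality ($\sinh t \ge t$) matches the paper's, and your overall plan — lower-bound the growth of $1/\sqrt{\lambda_k}$ and compare it to a hyperbolic-sine sequence, following \cite[Lemma~2.2.4]{Nesterov_book} — is the same strategy the paper uses. But the step you rely on to make this quantitative is too weak, and the induction you describe cannot close. The problem is your lower bound on $\alpha_k$. You replace the paper's identity $\gamma_{k+1} - \mu_k = \lambda_{k+1}(\gamma_0 - \mu_k)$ (equation \eqref{FGM_conv_eq_1}, the direct analogue of the classical $\gamma_k = \mu(1-\lambda_k)$, which you assert is ``lost'' but which in fact survives the memory term) by the crude estimate $\gamma_k \ge \mu_k - \mu$ drawn from \eqref{psi_bound}. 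Feeding that into \eqref{gamma_expr} and $\gamma_{k+1}=L\alpha_k^2$ gives $L\alpha_k^2 - \mu\alpha_k - (\mu_k-\mu)\ge 0$, whose positive root is $\alpha_k \ge \bigl(\mu + \sqrt{\mu^2 + 4L(\mu_k-\mu)}\bigr)/(2L)$. In the memoryless case $\mu_k=\mu$ this degenerates to $\alpha_k \ge \mu/L$, so your multiplicative recursion $u_{k+1}\ge u_k(1+\alpha_k/2)$ yields only $\lambda_k \lesssim e^{-k\mu/L}$ — exponentially weaker than the claimed $e^{-(k+1)\sqrt{\mu/L}}$ decay, and the lemma must hold for $\beta_{i,k}\equiv 0$. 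Even at the other extreme $\mu_k = 2\mu$ you get $\alpha_k \gtrsim \sqrt{(\mu_k-\mu)/L} = \sqrt{\mu/L}$, which is short of the required rate $\sqrt{\mu_k/L}=\sqrt{2\mu/L}$ by a constant factor in the exponent; accumulated over $k$ iterations this is fatal to the induction. In short, the estimate $\gamma_k\ge\mu_k-\mu$ discards precisely the mechanism that produces acceleration, namely that the recursion \eqref{gamma_expr} drives $\gamma_{k+1}$ all the way up to $\mu_k(1-\lambda_{k+1})\approx\mu_k$, so that $\alpha_k=\sqrt{\gamma_{k+1}/L}\approx\sqrt{\mu_k/L}$.

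The paper avoids this by never decoupling $\alpha_k$ from $\lambda_k$: substituting $\gamma_{k+1}=\mu_k(1-\lambda_{k+1})$ (with $\gamma_0=0$) into $\alpha_k=\sqrt{\gamma_{k+1}/L}$ gives $\alpha_k$ as a function of $\lambda_{k+1}$ itself, which after the substitution \eqref{xi_k_def} produces the difference inequality $\xi_{k+1}-\xi_k \ge \tfrac12\sqrt{1+\mu_k\xi_{k+1}^2/L}$ in \eqref{FGM_conv_eq_3}; the hyperbolic sine is the exact comparison solution of that inequality, and the induction is closed by a convexity/supporting-hyperplane argument. To repair your proof you would need to restore this self-referential bound on $\alpha_k$ (or an equivalent one), not merely a bound in terms of $\mu_k$ and $\mu$. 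Your second concern — that $\mu_k$ varies with $k$ so the hyperbolic target moves between induction steps — is legitimate, but it is a difficulty the paper's own induction also glosses over (its $\delta$ is treated as fixed inside the step); it is not the step at which your argument actually fails.
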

	\begin{proof}
		See Appendix \ref{Proof of Lemma 4}.
	\end{proof}

	Now, to show that SFGM is also an optimal method, the following theorem is in order.
	\begin{theorem}
		\label{tttttttttttttttttttttttt}
		In Algorithm~\ref{SFGM_algorithm}, let $\mu > 0$. Then, the algorithm generates a sequence of points such that
		\begin{align}
			\label{tttr}
			f(x_k) \! - \! f(x^*) \! &\leq \! \! \frac{\mu ||x_0 - x^*||^2}{\left( \! \! e^{ \! \frac{k + 1}{2} \! \sqrt{\frac{ \! \mu + \! \sum_{i = 1}^{k-1} \beta_{i,k} \gamma_{i}}{L}}} \! \! - \! e^{ \! -\frac{k + 1}{2} \! \sqrt{\frac{ \! \mu + \! \sum_{i = 1}^{k-1} \beta_{i,k} \gamma_{i}}{L}}} \! \right)^2} - (1-\lambda_k)\psi_k (x^*) %\\ \label{ttr} & \leq \! \frac{\mu L ||x_0 \! - \! x^*||^2}{(\mu + \sum_{i = 1}^{k-1} \beta_{i,k} \gamma_{i}) (k\! + \! 1)^2}
			%\leq \! \frac{2 \mu L ||x_0 \! - \! x^*||^2}{\left(\! \mu \! + \! \sum_{i = 1}^{k-1} \! \beta_{i,k} \gamma_{i} \! \right) \! \! (k\! + \! 1)^2} \! - \! (1 \! - \! \lambda_k)\psi_k (x^*)
			.
		\end{align}
		This means that the method is optimal when the accuracy $\epsilon$ is small enough, that is,  
		\begin{align}
			\label{eps}
			\epsilon \leq \frac{\mu}{2} R_0^2.
		\end{align}
	\end{theorem}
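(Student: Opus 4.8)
The plan is to obtain the displayed bound \eqref{tttr} by chaining the convergence estimate of Theorem~\ref{conv_analysis_t_1} with the smoothness of $f$ and the decay of $\{\lambda_k\}$ from Lemma~\ref{conv_analysis_lemma_1}, and then to read off the iteration complexity to establish optimality. First I would specialize Theorem~\ref{conv_analysis_t_1} to Algorithm~\ref{SFGM_algorithm}, whose initialization fixes $\gamma_0 = 0$; this annihilates the term $\frac{\gamma_0}{2}||x_0 - x^*||^2$ and leaves $f(x_k) - f(x^*) \leq \lambda_k [f(x_0) - f(x^*)] - (1-\lambda_k)\psi_k(x^*)$. Next I would bound $f(x_0) - f(x^*)$ through the upper inequality \eqref{upper_bound} with $y = x^*$: since $\nabla f(x^*) = 0$ at the optimum, this gives $f(x_0) - f(x^*) \leq \frac{L}{2}||x_0 - x^*||^2 = \frac{L}{2}R_0^2$. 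Substituting the first (exponential) upper bound on $\lambda_k$ from Lemma~\ref{conv_analysis_lemma_1}, the factors of $L$ cancel:
\begin{align}
\lambda_k [f(x_0) - f(x^*)] \leq \frac{2\mu}{L\left(e^{\frac{k+1}{2}\sqrt{\frac{\mu + \sum_{i=1}^{k-1}\beta_{i,k}\gamma_i}{L}}} - e^{-\frac{k+1}{2}\sqrt{\frac{\mu + \sum_{i=1}^{k-1}\beta_{i,k}\gamma_i}{L}}}\right)^2} \cdot \frac{L}{2}R_0^2, \nonumber
\end{align}
which produces exactly the numerator $\mu||x_0 - x^*||^2$ of \eqref{tttr}, while the $-(1-\lambda_k)\psi_k(x^*)$ term is carried along unchanged.

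For the optimality claim I would convert the bound into an iteration count. Because $\psi_k(x)\geq 0$ and $\lambda_k \leq 1$, the term $-(1-\lambda_k)\psi_k(x^*)$ is nonpositive and may be discarded, so it suffices to drive the leading term below $\epsilon$. Writing $t \triangleq \frac{k+1}{2}\sqrt{\frac{\mu + \sum_{i=1}^{k-1}\beta_{i,k}\gamma_i}{L}}$, the requirement $\frac{\mu R_0^2}{(e^t - e^{-t})^2} \leq \epsilon$ is equivalent to $e^t - e^{-t} \geq R_0\sqrt{\mu/\epsilon}$. Setting $A \triangleq R_0\sqrt{\mu/\epsilon}$ and inverting the hyperbolic expression yields the threshold $t \geq \ln\!\left(\frac{A + \sqrt{A^2+4}}{2}\right)$, which I would turn into a lower bound on $k$ via the definition of $t$.

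The final step is to produce a clean closed form and to invoke the accuracy hypothesis \eqref{eps}. The condition $\epsilon \leq \frac{\mu}{2}R_0^2$ is precisely $A^2 \geq 2$, which is exactly what validates the elementary bound $\left(\frac{A+\sqrt{A^2+4}}{2}\right)^2 \leq \frac{5A^2}{2}$, and hence $2\ln\!\left(\frac{A+\sqrt{A^2+4}}{2}\right) \leq \ln\frac{\mu R_0^2}{2\epsilon} + \ln 5$. I would then use the constraint \eqref{psi_bound_}, under which the memory sum obeys $\sum_{i=1}^{k-1}\beta_{i,k}\gamma_i \leq \mu$ (recall $\gamma_0 = 0$, so the $i=0$ term vanishes), so that $\mu + \sum_{i=1}^{k-1}\beta_{i,k}\gamma_i$ approaches its maximum $2\mu$. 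Combining these, the required number of iterations converges to $\sqrt{\frac{L}{2\mu}}\!\left(\ln\frac{\mu R_0^2}{2\epsilon} + \ln 5\right)$, which lies within a constant factor of the Nemirovski--Yudin lower bound \cite{Nemirovski_Yudin}; this certifies optimality, and the $\frac{1}{\sqrt{2}}$ gain over FGM is exactly the effect of the memory term promoting $\mu$ to $2\mu$ inside the square root. I expect the main obstacle to be this concluding constant-tracking: the nonpositive $\psi_k(x^*)$ term, the $k$-dependence of the memory sum $\sum_{i=1}^{k-1}\beta_{i,k}\gamma_i$, and the hyperbolic inversion must all be controlled simultaneously so as to yield a bound whose clean additive constant $\ln 5$ still matches the lower complexity bound.
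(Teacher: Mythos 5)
Your proposal is correct and follows essentially the same route as the paper's proof: combine Theorem~\ref{conv_analysis_t_1} (with $\gamma_0=0$) and $f(x_0)-f^*\leq \frac{L}{2}R_0^2$ with the exponential bound on $\lambda_k$ from Lemma~\ref{conv_analysis_lemma_1} to get \eqref{tttr}, then invert for the iteration count and use \eqref{eps} to absorb the constant into $\ln 5$. The only cosmetic difference is that you invert $e^t-e^{-t}\geq A$ exactly via $t\geq\ln\bigl(\tfrac{A+\sqrt{A^2+4}}{2}\bigr)$, whereas the paper first relaxes the denominator to $e^{2t}-1$ before taking logarithms; both land on the same bound \eqref{sfgm_bound}.
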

	\begin{proof}
		See Appendix \ref{Proof of Theorem 2}. 
	\end{proof}
	
	Finally, we can directly compare our proposed method to FGM, which requires the following number of iterations \cite[(2.2.17)]{Nesterov_book}
	\begin{align}
		\label{k_fgm}
		k_{FGM} &\geq \sqrt{\frac{L}{\mu}} \left(\text{ln} \left(\frac{\mu R_0^2}{2 \epsilon}\right) + \text{ln}(23/3)\right).
	\end{align}
	Comparing the bound in \eqref{k_fgm} to the bound obtained from our proposed method in \eqref{lower_bound__} (see the supplemental materials), we can observe that SFGM always outperforms FGM despite any valid selection of the coefficients $\beta_{i,k}$. Under the selection $\beta_{i,k} = 0, \forall i= 1, \ldots k-1$, which reduces SFGM to FGM, we observe that we still have an improvement of a constant number of iterations. This stems from the fact that our result obtained in Lemma~\ref{conv_analysis_lemma_1} yields a tighter bound on the sequence $\{\lambda_k\}_{k=0}^\infty$. Moreover, it also supports the smallest possible starting value for initializing the sequence $\{\gamma_k\}_{k=0}^\infty$, which is $\gamma_{0} = 0$, which is not supported by the existing analysis for FGM.  
	
	Allowing for nonzero values of $\beta_{i,k}$, a better scaling factor than for FGM is also obtained. Moreover, note that the bound obtained in \eqref{sfgm_bound} (see supplemental materials) is dynamic, and if $\sum_{i = 1}^{k-1} \beta_{i,k} \gamma_{i} \rightarrow \mu$, then we obtain the tightest provable bound on the performance of SFGM. Here, we remark that \eqref{sfgm_bound} is still an upper bound on the true performance of SFGM. The reason for that is that it is based on the bound obtained in \eqref{FGM_conv_eq_7} (see supplemental materials), which does not account for the extra terms coming from the sequence $\{\psi_k (x)\}_{k=0}^\infty$. The rationale behind this approach stems from the difficulty of estimating the size of the terms in the sequence $\{\psi_k (x)\}_{k=0}^\infty$.
	
	So far, no explicit construction about the terms $\beta_{i,k}$ has been given. Evidently, they act as weights that allow us to parse function information. From the result of Lemma~\ref{conv_analysis_lemma_1}, we observe that it is beneficial to allow the term $\sum_{i = 1}^{k-1} \beta_{i,k} \gamma_{i}$ to be as large as possible. The bound for this term has been obtained in \eqref{psi_bound_}. There are several ways to select the coefficients $\beta_{i,k}$, $\forall i, k$, and at the same time satisfy the bound. For instance, $\beta_{i,k}$ can be selected to account for certain samples of the previously constructed scanning functions, or a window of the previous scanning functions, or it can act as a forgetting factor that spans the entire range of the scanning functions with some weight. For this paper, we pose the optimal selection of these coefficients as an open problem, and focus on the simplest choice for the coefficients, that is,  
	\begin{align}
		\label{betas_selection}
		\beta_{i,k} =   \begin{cases}
			\min \left(1, \frac{\mu}{\gamma_{k-1}} \right),       & \quad \text{if } i = k-1, \\
			0,  & \quad \text{otherwise}.
		\end{cases}
	\end{align}
	
	With this selection of $\beta_{i,k}$, the lower bound on the number of iterations becomes
	\begin{align}
		\label{new_sfgm_bound}
		k_{SFGM} &\geq \sqrt{\frac{L}{\mu + \min \left(\gamma_{k - 1}, \mu\right)}} \left(\text{ln} \left(\frac{\mu R_0^2}{2 \epsilon}\right) + \text{ln}(5)\right).
	\end{align} 
	Then, from \eqref{psi_bound_}, \eqref{gamma_expr} and allowing $k \rightarrow \infty$, we obtain
	\begin{align}
		\label{asymptotic_bound}
		k_{SFGM} \rightarrow \sqrt{\frac{L}{2 \mu}} \left(\text{ln} \left(\frac{\mu R_0^2}{2 \epsilon}\right) + \text{ln}(5)\right).
	\end{align}
	Let us now analyze the relative behavior of terms $\alpha_k$ and $\gamma_k$. From the update rule of the sequence $\{\alpha_k\}_{k=0}^\infty$, \eqref{alpha_k_intuition}, we can observe that $\alpha_{k} \propto \gamma_{k + 1}$. Therefore, if the value of $\gamma_{k+1}$ increases, the value of $\alpha_{k}$ also increases. From the relationship for computing $\gamma_{k + 1}$ obtained in \eqref{gamma_expr}, we can see that also $\gamma_{k + 1}$ increases with $\alpha_{k}$. Therefore, we can conclude that these two terms recursively increase the value of one-another. In Lemma~\ref{SFGM_lemma_1}, we established that $\gamma_{0} = 0$. Then, from the update rule of the sequence $\{\gamma_k\}_{k=0}^\infty$, we can see that $\gamma_{1} > \gamma_{0}$. This results in a value of $\alpha_{0} > 0$, which then causes the values of the sequence $\{\gamma_k\}_{k=0}^\infty$ to increase. Therefore, as the algorithm progresses and the values of $\gamma_{k - 1}$ increase, the bound in \eqref{new_sfgm_bound} converges to \eqref{asymptotic_bound}. Lastly, we emphasize that the LHS in \eqref{new_sfgm_bound} converges to \eqref{asymptotic_bound} very quickly due to the exponential growth of the terms in the sequence $\{\gamma_k\}_{k=0}^\infty$. Analytically, this can be seen by writing $\alpha_k = \sqrt{\gamma_{k+1}/L} = 1 - \lambda_{k+1}/\lambda_k$, and observing from Lemma~\ref{conv_analysis_lemma_1} that the terms of the sequence $\{\lambda_k\}_{k=0}^\infty$ decrease exponentially. Numerically, this is also shown in Section~\ref{linear_regression}.

	%%%%%%%%%%%%%%%%%%%%%%%%%%%%%%%%%%%%%%%%%%%%%%%%%%%%%%%%%%%%%%%%%%%%
	\section{Numerical study}
	\label{numericals}
	%%%%%%%%%%%%%%%%%%%%%%%%%%%%%%%%%%%%%%%%%%%%%%%%%%%%%%%%%%%%%%%%%%%%
	In this section, we test the efficiency of several instances of the proposed method both in terms of decreasing the distance to optimality, as well as in decreasing the norm of the gradient. Motivated by different types of applications in statistical signal processing, machine learning, inverse problems, etc., we focus on minimizing the quadratic and the logistic loss functions. Both synthetic and real data are utilized to analyze different aspects of the algorithm. The synthetic data, which are randomly generated, are used to have a better insight on how the performance of the methods scales with the condition number of the problem. On the other hand, the real-world datasets are drawn from the Library for Support Vector Machines (LIBSVM) \cite{LIBSVM}. The datasets that we use are selected according to the specific problem instances. For comparison purposes, we also utilize CVX~\cite{CVX} to find the optimal solutions. 
	%Our computational results assert that the theoretical gains, which were established in the earlier sections, also carry through when evaluating the practical performance of the methods.
	
	We benchmark against two instances of FGM Constant Step Scheme I (CSS1). More specifically, we consider the starting values for $\gamma_{0} = L$, which we refer to as FGM CSS1, and $\gamma_{0} = \mu$, which yields the best performance for FGM. The latter also corresponds to Constant Step Scheme~III (CSS3) \cite[Chapter 2.2]{Nesterov_book}. To simulate SFGM, we consider the simplest instances of the algorithm, respectively selecting $\beta_{0, k} = 1$ and $\beta_{i,k} = 0, \forall i = 1, \ldots k$. This instance of the algorithm is referred to as memoryless SFGM. We note that when $\gamma_{0} = 0$, this algorithm corresponds to FGM. However, the original analysis of FGM does not guarantee convergence of the method with $\gamma_{0} = 0$, whereas SFGM guarantees convergence, and achieves it in a smaller number of iterations. The other instance of the algorithm that is considered, is the one introduced in \eqref{betas_selection}. This instance is referred to as SFGM with memory term $\gamma_{k - 1}$. Relative to the CSS1 of FGM, this instance of SFGM requires the storage of an extra vector and scalar. Regarding the computations, it performs four more scalar additions and one more vector addition. Nevertheless, despite this slight increase in computational burden, we have already proved that SFGM with memory term $\gamma_{k-1}$ is an optimal method. Lastly, the starting point $x_0$ is randomly selected and all algorithms are initiated in it.
	
	%%%%%%%%%%%%%%%%%%%%%%%%%%%%%%%%%%%%%%%%%%%%%%%%%%%%%%%%%%%%%%%%%%%%
	\subsection{Decreasing the distance to optimality}
	\label{linear_regression}
	%%%%%%%%%%%%%%%%%%%%%%%%%%%%%%%%%%%%%%%%%%%%%%%%%%%%%%%%%%%%%%%%%%%%
	
	We start by solving problems of the form
	\begin{equation}
		\begin{aligned}
			\label{mse}
			& \underset{x \in \mathcal{R}^n}{\text{minimize}}
			& &\frac{1}{2m} \sum\limits_{i=1}^m (a_i^T x - y_i)^2 + \frac{\tau}{2} ||x||^2.  
		\end{aligned}
	\end{equation}
	The main goal of this section is to show that the theoretical convergence guarantees obtained in Section~\ref{Convergence analysis} yield a realistic description of the practical performance of the methods. Moreover, we analyze how the performance of the methods scales with the condition number of the problem. We also show the fast convergence of the terms in the sequence $\{\gamma_k\}_{k=0}^\infty$.
	
	Let us begin by considering the simplest case, $\tau = 0$. To generate the data, we consider a symmetric positive definite diagonal matrix $A \in \mathcal{R}^{m\text{x}m}$, whose elements $a_{ii}$ are drawn from the discrete set $\{10^0, 10^{-1}, 10^{-2}, \ldots 10^{-\xi} \}$ uniformly at random. This ensures control over the condition number of the matrix $A$, which will be $10^{\xi}$. Moreover, this choice of constructing $A$ yields the values for $L = 1$ and $\mu = 10^{-\xi}$. The entries of the vector $y \in \mathcal{R}^m$ are uniformly drawn from the box $[0, 1]^n$. In our computational experiments, we set $m = 1000$ and $\xi \in \{3, 4\}$. Our findings are reported in Fig.~\ref{num_sec_fig_1}. 
	\begin{figure} 
		\centering
		\begin{subfigure}[h]{0.49\columnwidth}
			\centering
			\includegraphics[width=1\columnwidth,height = 0.8\linewidth, trim={3cm 9.5cm 3cm 7.5cm}]{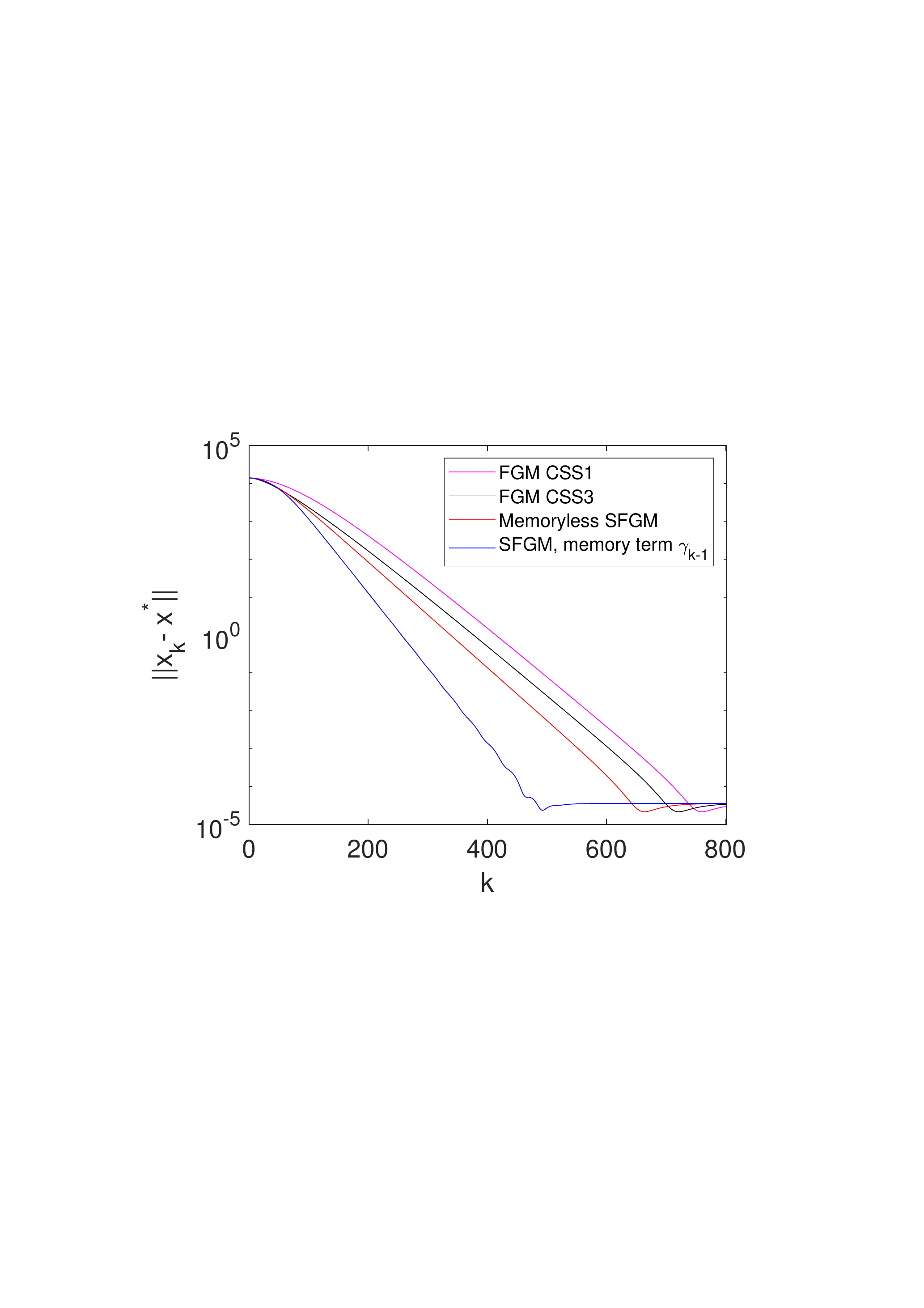} 		\label{num_sec_fig_1_sub_1}  \vspace{-10mm}
			\caption{Decreasing the distance to $x^*$, $\kappa = 10^{3}$.} \vspace{-10mm}
		\end{subfigure}
		% \vfill
		\begin{subfigure}[h]{0.49\columnwidth}
			\centering
			\includegraphics[width=\columnwidth,height = 0.8\linewidth, trim={3cm 9.5cm 3cm 7.5cm}]{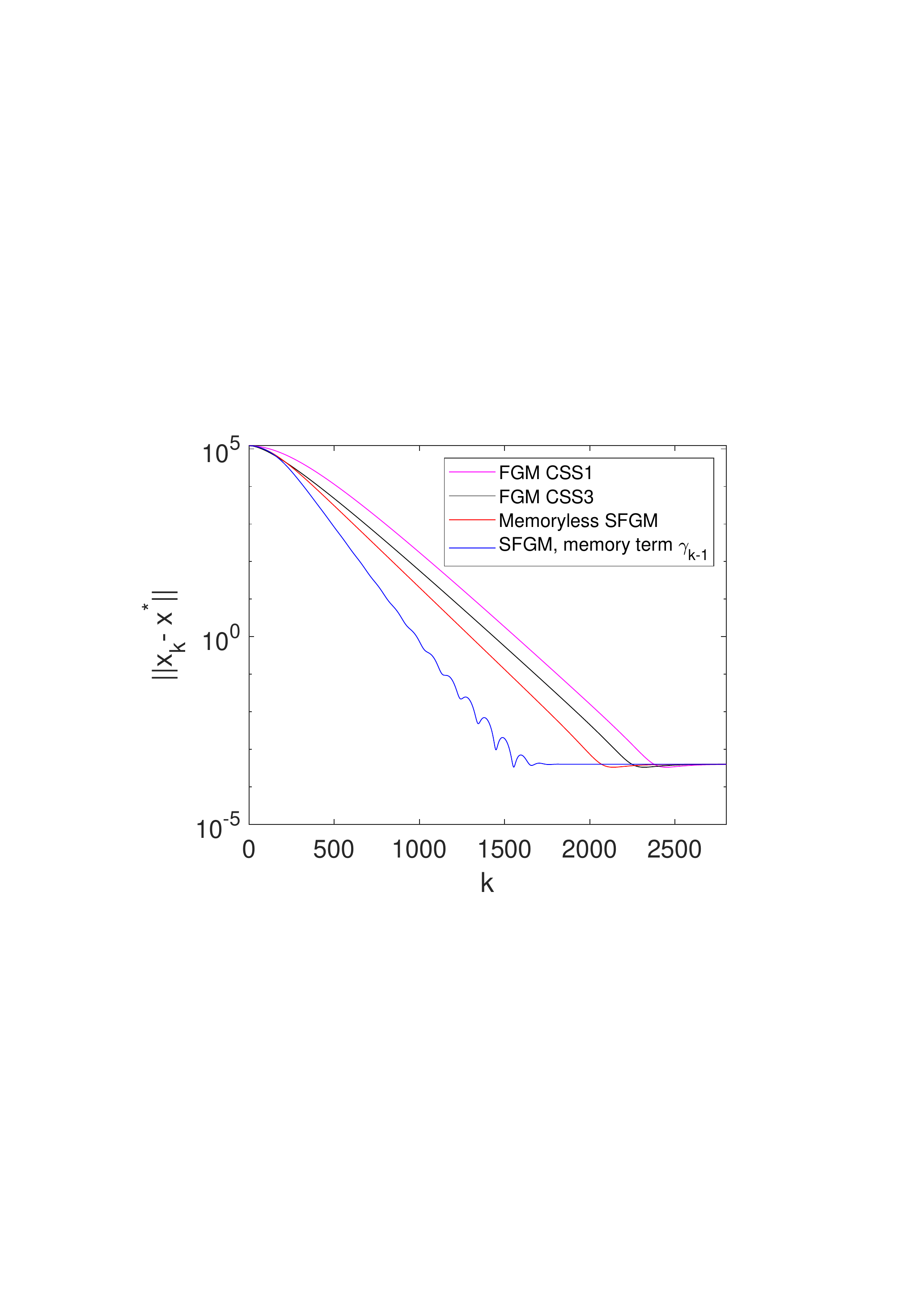} 		\label{num_sec_fig_1_sub_2}  \vspace{-10mm}
			\caption{Decreasing the distance to $x^*$, $\kappa = 10^{4}$.}  \vspace{-10mm}
		\end{subfigure}
		\begin{subfigure}[h]{0.49\columnwidth}
			\centering
			\includegraphics[width = \columnwidth,height = 0.8\linewidth, trim={3cm 9.5cm 3cm 7.5cm}]{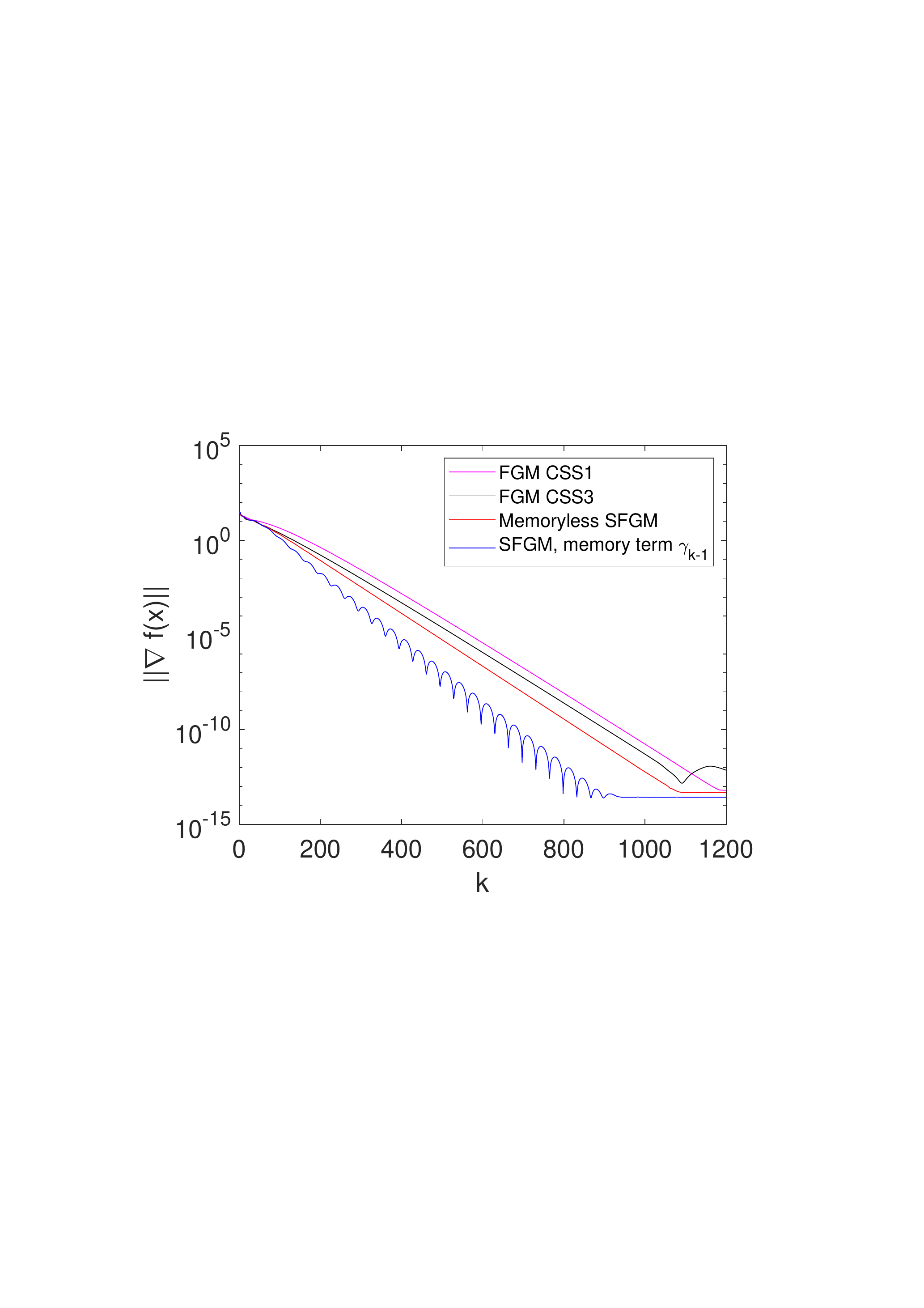} 		\label{num_sec_fig_1_sub_3}  \vspace{-10mm}
			\caption{Decreasing the norm of the gradient, $\kappa = 10^{3}$.} \vspace{-10mm}
		\end{subfigure}
		% \vfill
		\begin{subfigure}[h]{0.49\columnwidth}
			\centering
			\includegraphics[width=\columnwidth,height = 0.8\linewidth, trim={3cm 9.5cm 3cm 7.5cm}]{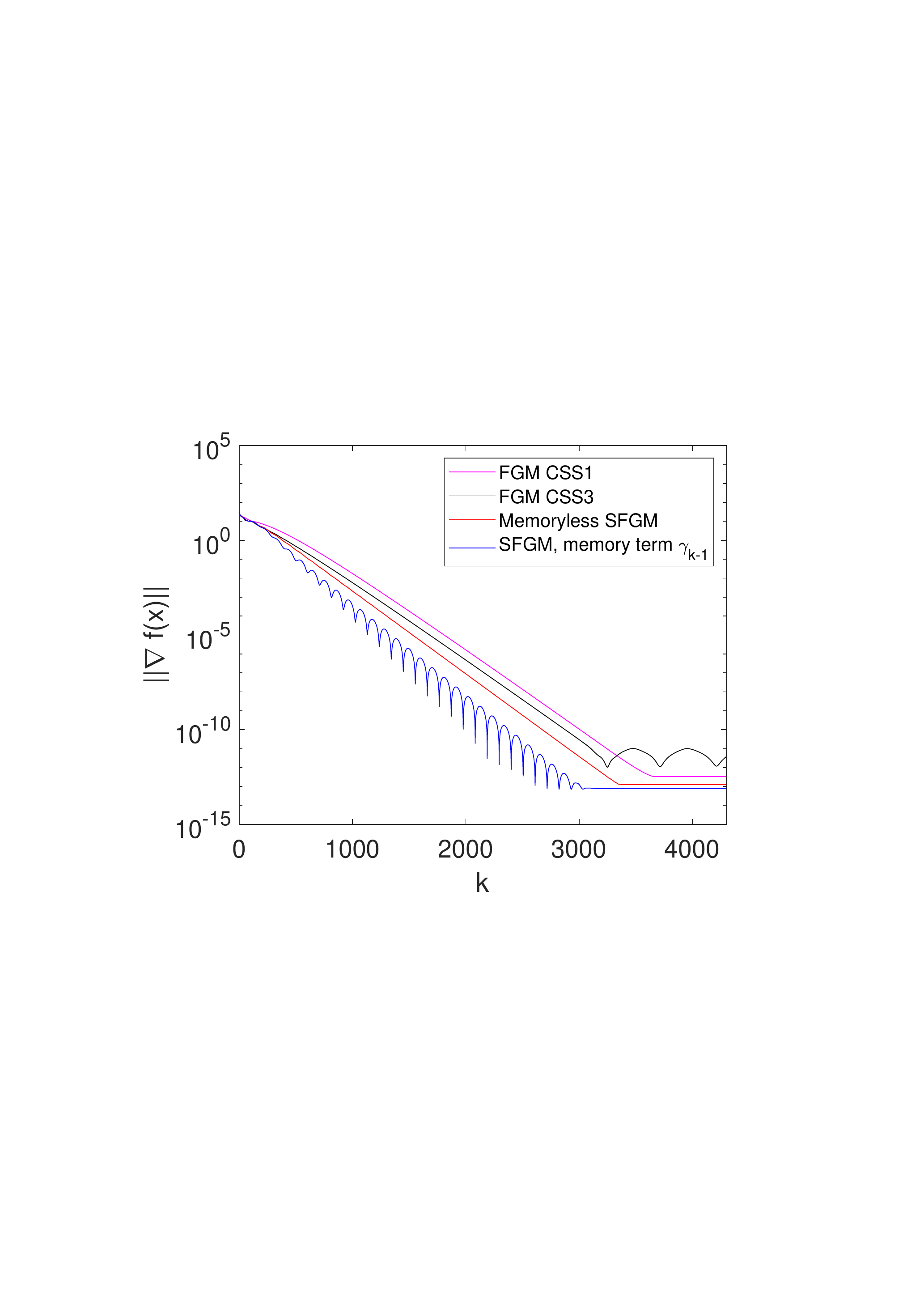} 		\label{num_sec_fig_1_sub_4} \vspace{-10mm}
			\caption{Decreasing the norm of the gradient, $\kappa = 10^{4}$.}  \vspace{-10mm}
		\end{subfigure}
		\begin{subfigure}[h]{0.49\columnwidth}
			\centering
			\includegraphics[width = \columnwidth,height = 0.8\linewidth, trim={3cm 9.5cm 3cm 7.5cm}]{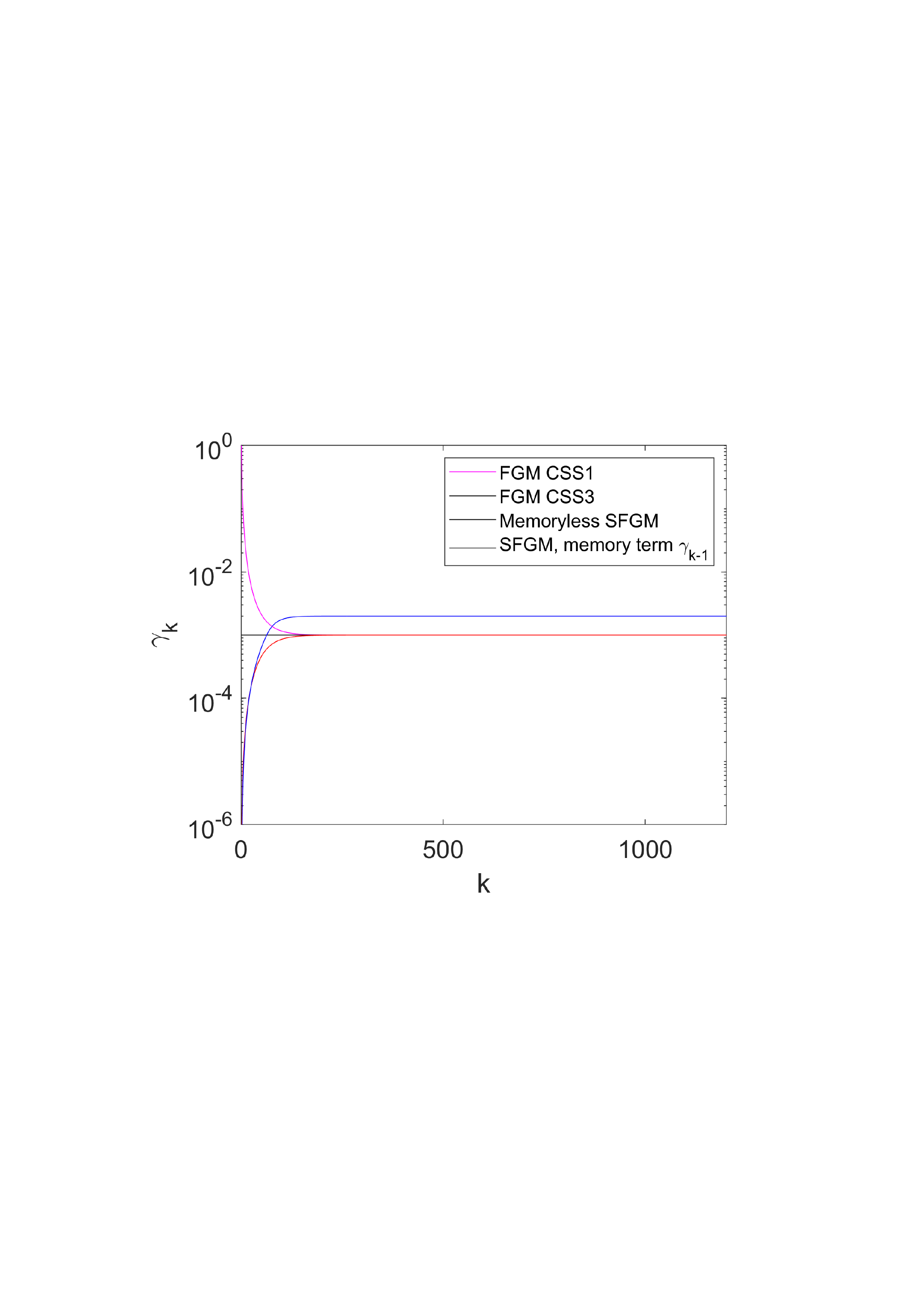} 		\label{num_sec_fig_1_sub_5}  \vspace{-10mm}
			\caption{Convergence of the sequence $\{\gamma_k\}_{k=0}^\infty$, $\kappa = 10^{3}$.} 
		\end{subfigure}
		\begin{subfigure}[h]{0.49\columnwidth}
			\centering
			\includegraphics[width= \columnwidth,height = 0.8\linewidth, trim={3cm 9.5cm 3cm 7.5cm}]{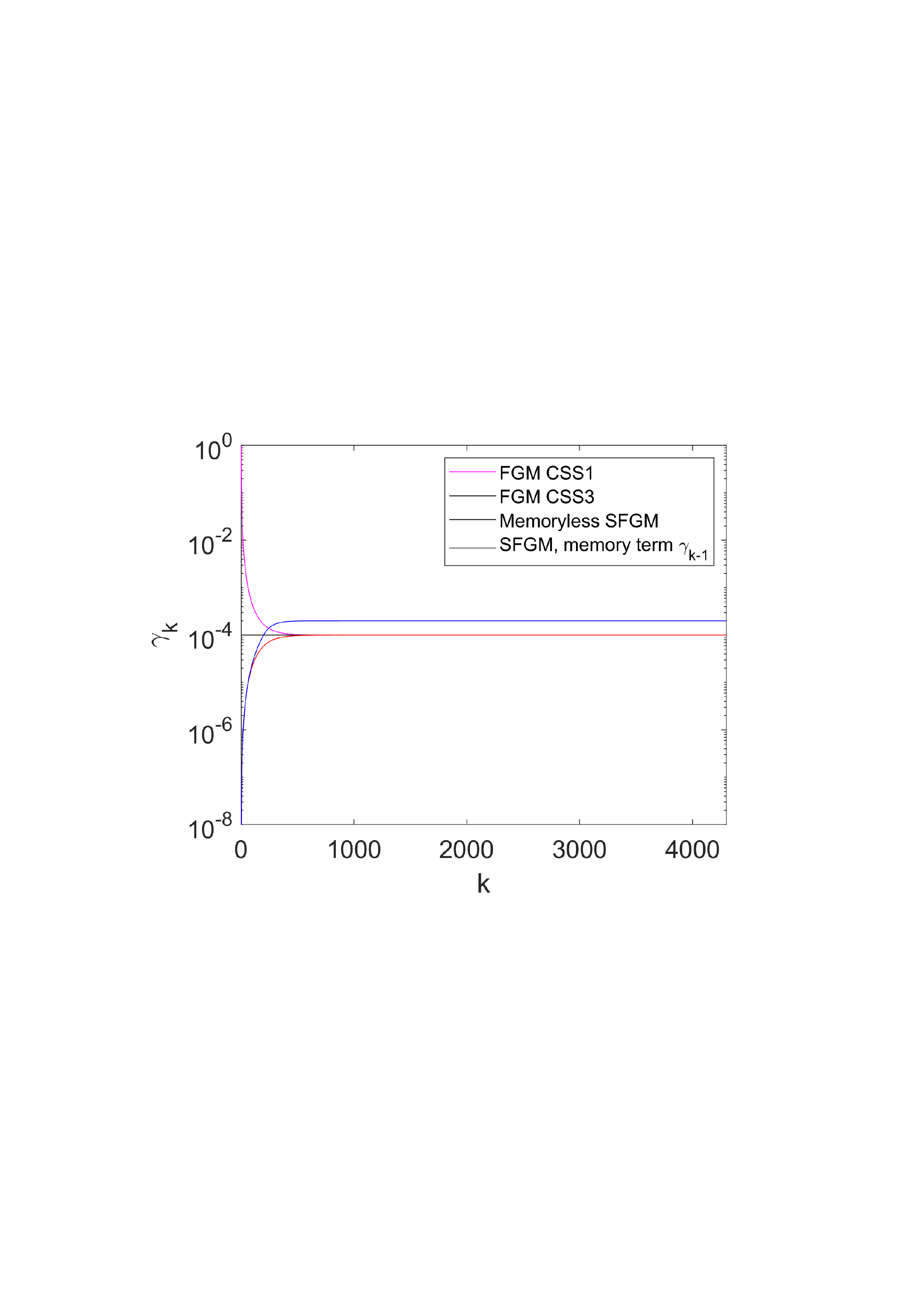} 		\label{num_sec_fig_1_sub_6} \vspace{-10mm}
			\caption{Convergence of the sequence $\{\gamma_k\}_{k=0}^\infty$, $\kappa = 10^{4}$.} 
		\end{subfigure}
		\caption{Comparison between various features of interest of the tested algorithms. The goal is to minimize the quadratic loss function, for which $A \in \mathcal{R}^{1000 \text{x} 1000}$ and its entries are randomly generated.}
		\label{num_sec_fig_1}
	\end{figure}
	
	From Fig.~\ref{num_sec_fig_1}, we can see the performance gains of SFGM. The quality of the obtained solution, as measured by the distance to the optimal solution $x^*$, is similar to that obtained by FGM, however the number of iterations required by SFGM is smaller. In the case of the memoryless version of SFGM, we can observe that it exhibits the same behavior as FGM, however it converges faster. This is coherent with the theoretical bounds established in Section~\ref{Convergence analysis}. A similar observation can also be made for the case of SFGM with memory term $\gamma_{k-1}$. From Figs.~\ref{num_sec_fig_1}(a)~and~\ref{num_sec_fig_1}(b), we can see that the method yields an improvement of approximately $30\%$ over FGM CSS3. This result is also coherent with the theoretical asymptotic bound obtained in \eqref{asymptotic_bound}, that also suggests an improvement of $30\%$ over FGM. A similar observation can also be made from Figs.~\ref{num_sec_fig_1}(c)~and~\ref{num_sec_fig_1}(d), in which we report the decrease in the norm of the gradient. Moreover, from Figs.~\ref{num_sec_fig_1}(e)~and~\ref{num_sec_fig_1}(f), we can observe the exponential convergence of the term $\gamma_{k-1}$ to $\mu$. Lastly, as the condition number of the problem increases, all methods require a larger number of iterations to converge. For instance, from Fig.~\ref{num_sec_fig_1}(a), we can see that when $\kappa = 10^3$ the performance difference between the tested algorithms is of the order of hundreds of iterations. Then, when $\kappa = 10^4$, from Fig.~\ref{num_sec_fig_1}(b), we can see that the differences between algorithms increases. In the sequel, we will see that for more ill conditioned problems, the differences between the tested algorithms become even larger. 
	
	Next, we proceed by considering the more general case, $\tau \neq 0$. We let $A \in \mathcal{R}^{m \text{x}n}$ and $b \in \mathcal{R}^{m}$ and start with the case when $m < n$. Both synthetic and real data are utilized. To diversify the type of synthetic data used, here we do not impose any particular structure on $A$. We simply draw the elements for both $A$ and $b$ from a standard normal distribution and set $m = 800$ and $n = 1000$. Regarding real data, we utilize the ``colon-cancer'' dataset, for which $m = 62$ and $n = 2000$. The data that is used also dictates the values of $L$ and $\mu$. In practice, estimating $\mu$ is challenging and computationally expensive. For this reason, the common approach that is followed is to assume that the strong convexity parameter of the data is $0$. In all the numerical experiments that will be presented in the sequel, we also follow this approach, and equate $\mu$ to the regularization parameter $\frac{\tau}{2}$. On the other hand, similar to the previous computational experiments (and to be coherent with the theoretical analysis) we estimate the Lipschitz constant directly from the data. Nevertheless, we note that several efficient backtracking strategies for estimating $L$ already exist in the literature \cite{Nesterov_2007, Tseng}. For the datasets that we are utilizing, the respective Lipschitz constants are $L_{\text{``random''}} = 3567.1$ and $L_{\text{``colon-cancer''}} = 1927.4$. Moreover, for both data types, we let the regularizer term $\tau \in  \{10^{-5}, 10^{-6}\}$. Evidently, this selection of the regularizer term ensures that the condition number of the problems that are being solved is quite high. The numerical results are presented in Fig.~\ref{num_sec_fig_2}, from which we can observe that SFGM with memory term $\gamma_{k-1}$ again outperforms FGM CSS3 by approximately $35\% -40\%$. 
	\begin{figure}
		\centering
		\begin{subfigure}[h]{0.49\columnwidth}
			\centering
			\includegraphics[width=1\columnwidth,height = \linewidth, trim={3cm 9.5cm 3cm 7.5cm}]{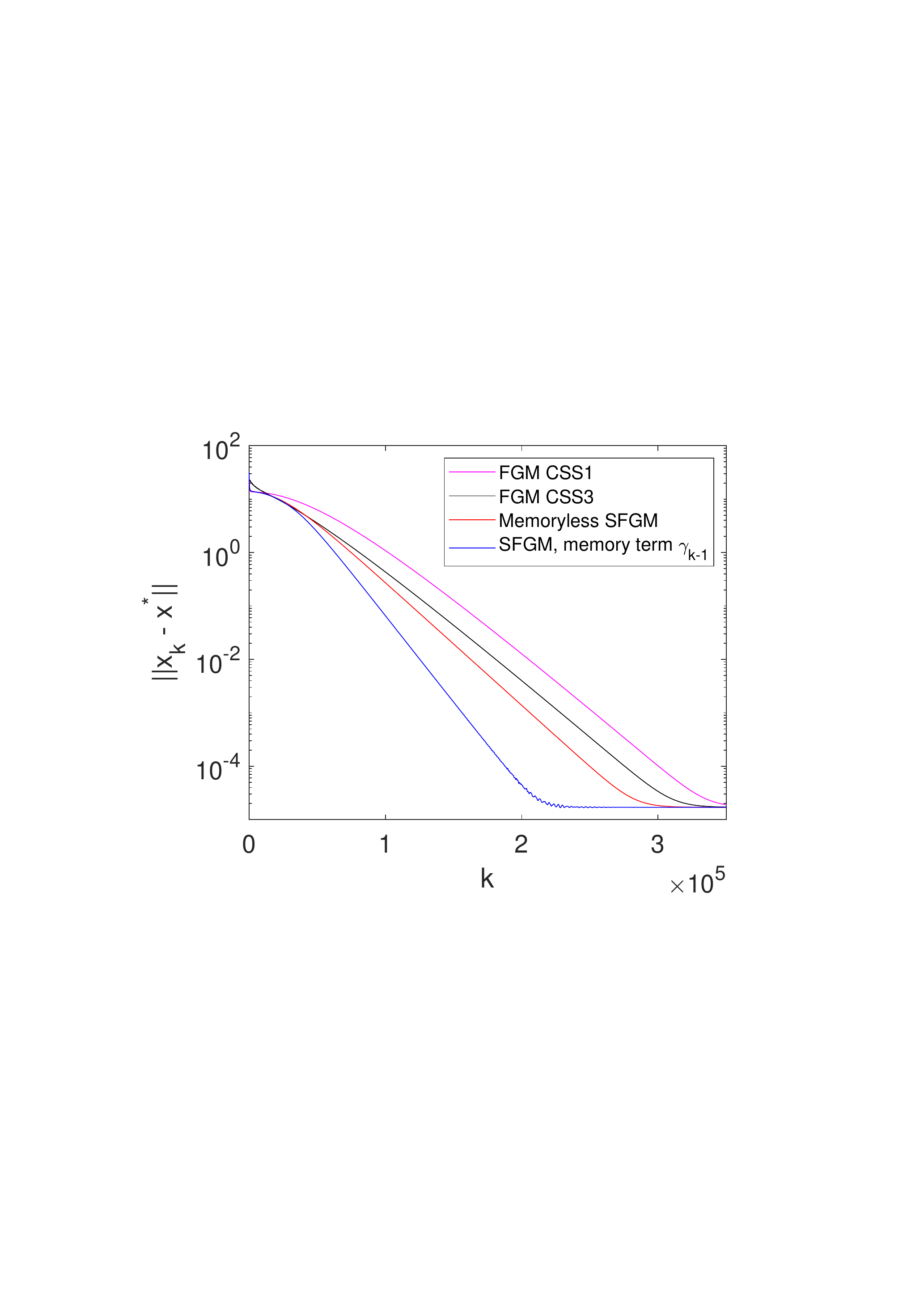} 		\label{num_sec_fig_2_sub_1} \vspace{-10mm}
			\caption{Random data, $\tau = 10^{-5}$.} \vspace{-10mm}
		\end{subfigure}
		% \vfill
		\begin{subfigure}[h]{0.49\columnwidth}
			\centering
			\includegraphics[width=\columnwidth,height = \linewidth, trim={3cm 9.5cm 3cm 7.5cm}]{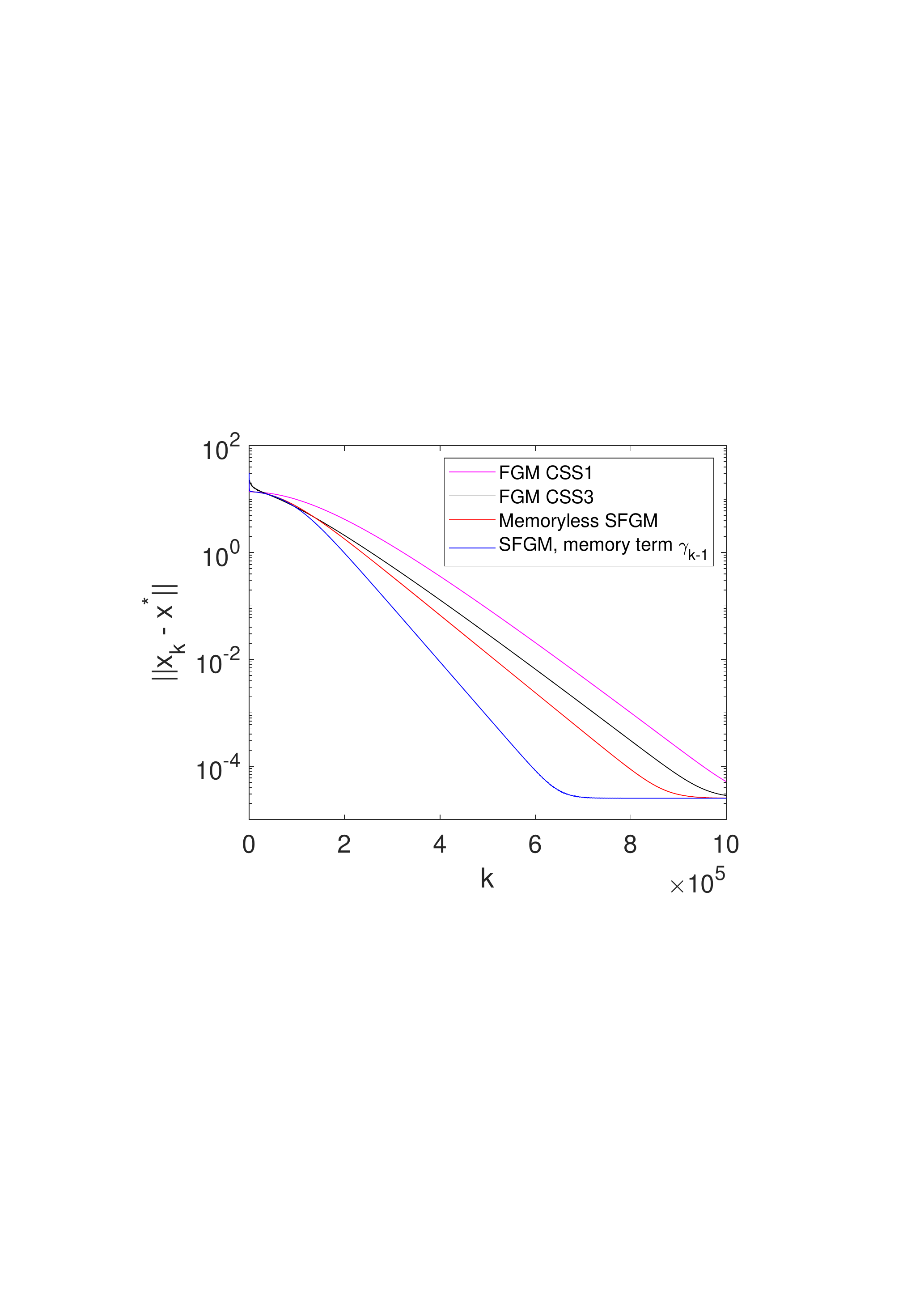} 		\label{num_sec_fig_2_sub_2}  \vspace{-10mm}
			\caption{Random data, $\tau = 10^{-6}$.} \vspace{-10mm}
		\end{subfigure}
		\begin{subfigure}[h]{0.49\columnwidth}
			\centering
			\includegraphics[width = \columnwidth,height = \linewidth, trim={3cm 9.5cm 3cm 7.5cm}]{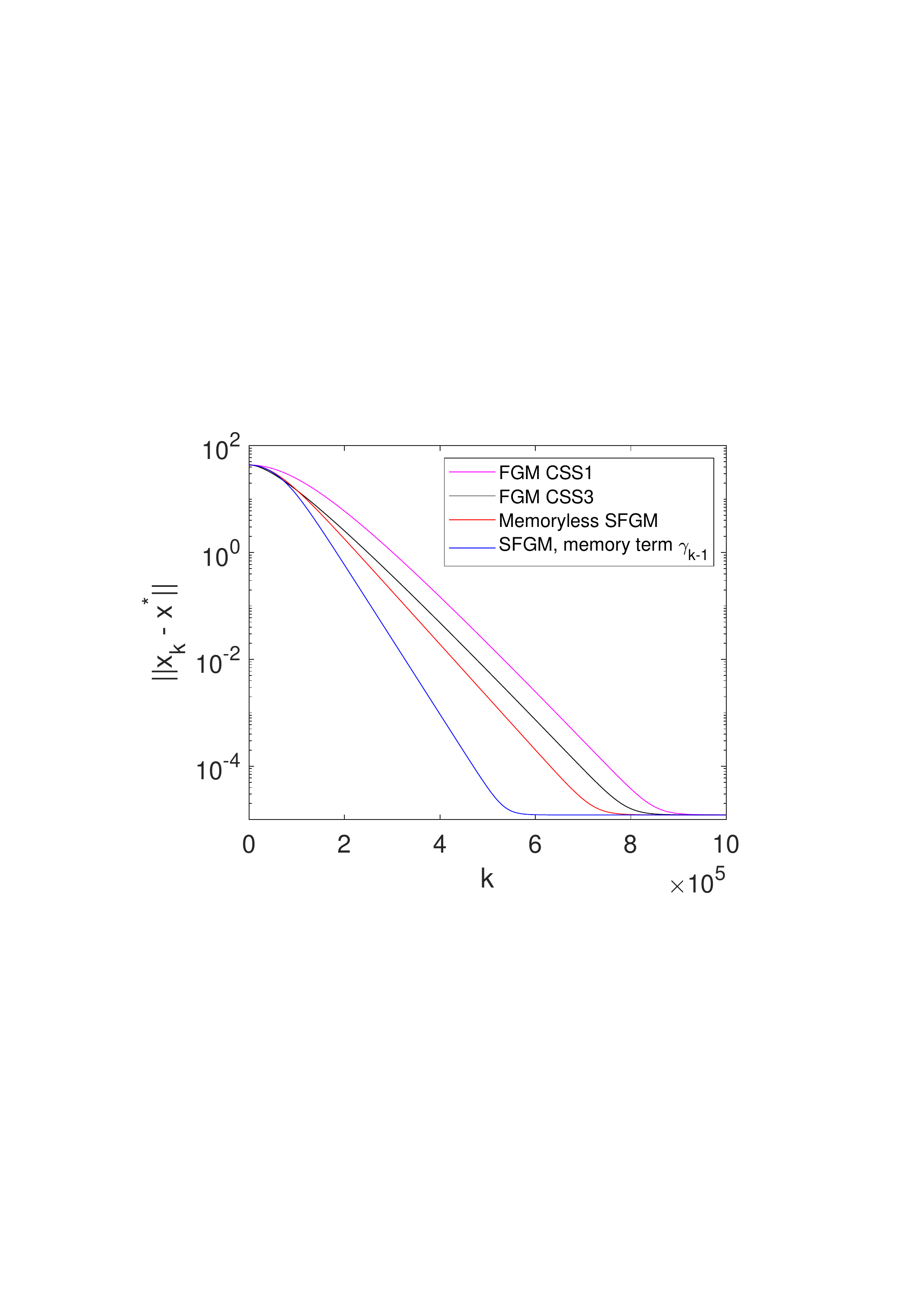} 		\label{num_sec_fig_2_sub_5} \vspace{-8mm}
			\caption{The dataset is ``colon-cancer'', $\tau = 10^{-5}$.}
		\end{subfigure}
		% \vfill
		\begin{subfigure}[h]{0.49\columnwidth}
			\centering
			\includegraphics[width=\columnwidth,height = \linewidth, trim={3cm 9.5cm 3cm 7.5cm}]{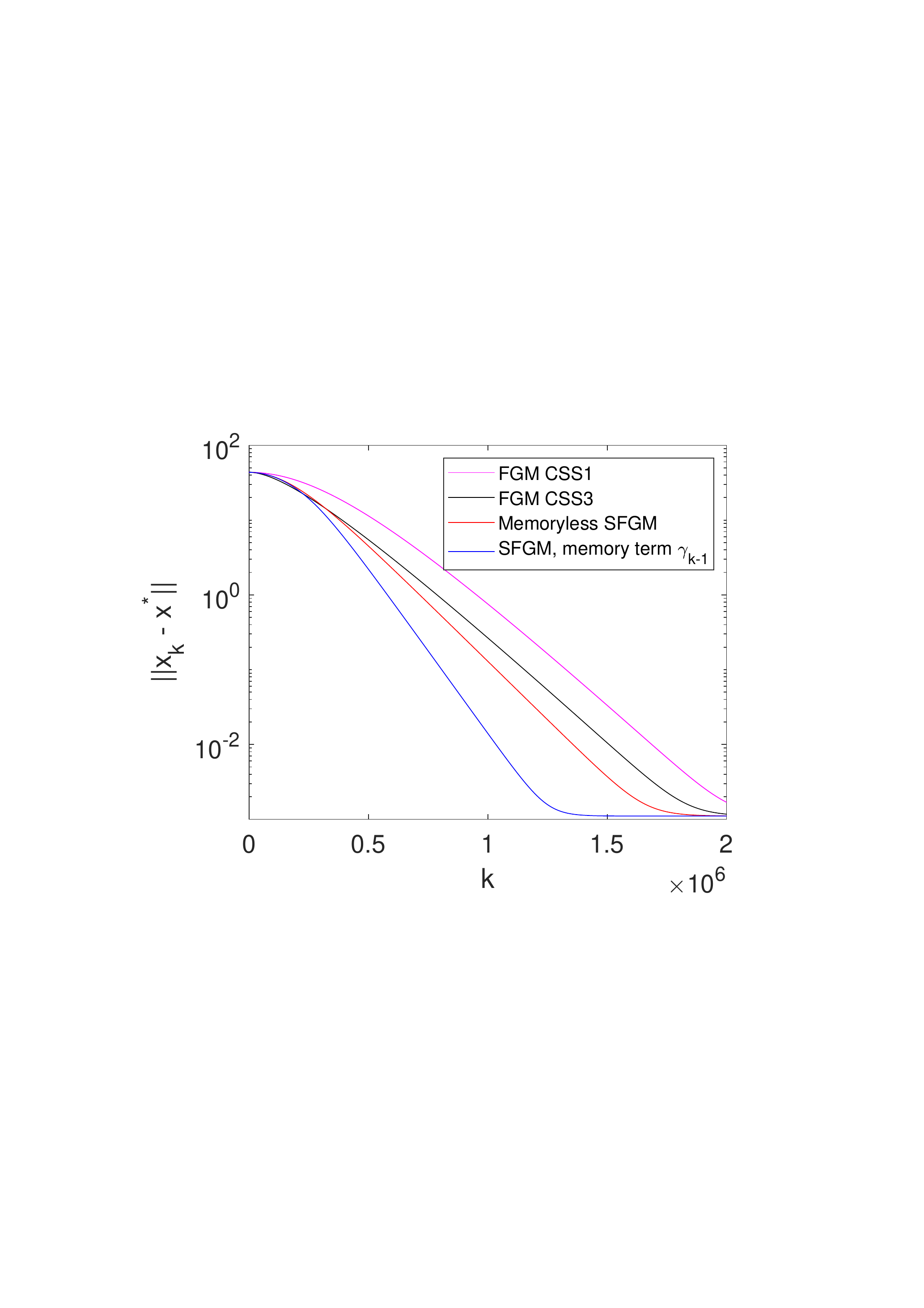} 		\label{num_sec_fig_2_sub_4} \vspace{-8mm}
			\caption{The dataset is ``colon-cancer'', $\tau = 10^{-6}$.}  
		\end{subfigure}
		\caption{Comparison between the efficiency of algorithms tested in minimizing the regularized quadratic loss function in the case where $m < n$, i.e., $A$ is a fat matrix.}
		\label{num_sec_fig_2}
	\end{figure}
	
	Finally, we analyze the remaining case, in which the matrix $A$ is a tall matrix. For this, we only consider real data. The datasets that we selected were ``triazine'' and ``a1a''. For the former dataset, we have $m = 186$ and $n = 60$. For the latter, we have $m = 1605$  and $n = 123$. The corresponding Lipschitz constants are $L_{\text{``triazines''}} = 632.2804$ and $L_{\text{``a1a''}} = 10061$. The regularizer term is set $\tau \in \{10^{-7}, 10^{-8}\}$. The results are reported in Fig.~\ref{num_sec_fig_3}. Despite the fact that the problems being solved are extremely ill-conditioned, we can see that the fastest version of SFGM retains its theoretical gains of approximately $30\% - 35\%$ across all datasets, when compared to the fastest version of FGM, which is CSS3. 
	\begin{figure}
		\centering
		\begin{subfigure}[h]{0.49\columnwidth}
			\centering
			\includegraphics[width=1\columnwidth,height = \linewidth, trim={3cm 9.5cm 3cm 8cm}]{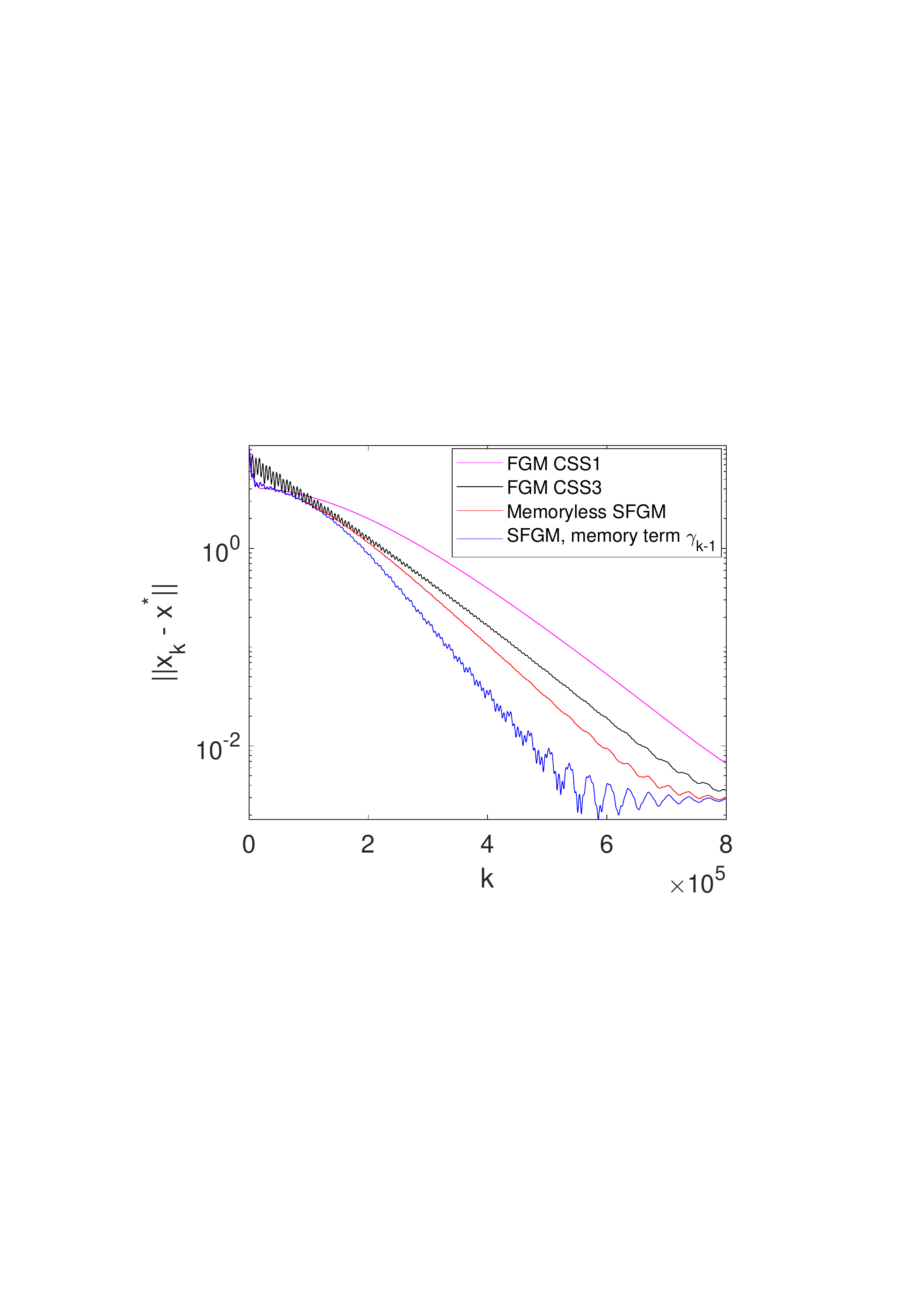} 		\label{num_sec_fig_3_sub_1} \vspace{-8mm}
			\caption{The dataset is ``triazines'' and $\tau = 10^{-7}$.} \vspace{-8mm}
		\end{subfigure}
		% \vfill
		\begin{subfigure}[h]{0.49\columnwidth}
			\centering
			\includegraphics[width=\columnwidth,height = \linewidth, trim={3cm 9.5cm 3cm 7.5cm}]{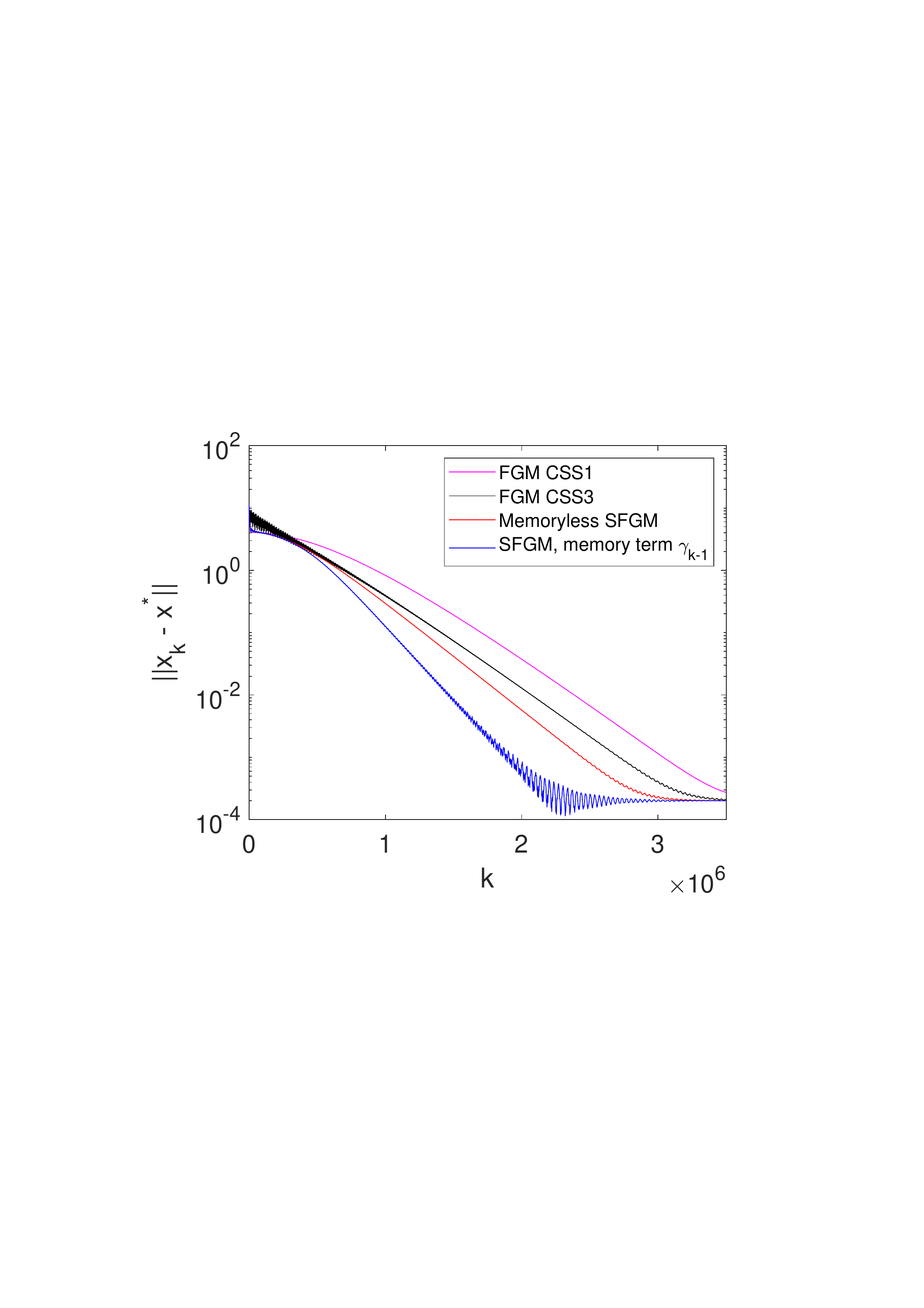} 		\label{num_sec_fig_3_sub_2} \vspace{-8mm}
			\caption{The dataset is ``triazines'' and $\tau = 10^{-8}$.}  \vspace{-8mm}
		\end{subfigure}
		\begin{subfigure}[h]{0.49\columnwidth}
			\centering
			\includegraphics[width = \columnwidth,height = \linewidth, trim={3cm 9.5cm 3cm 7.5cm}]{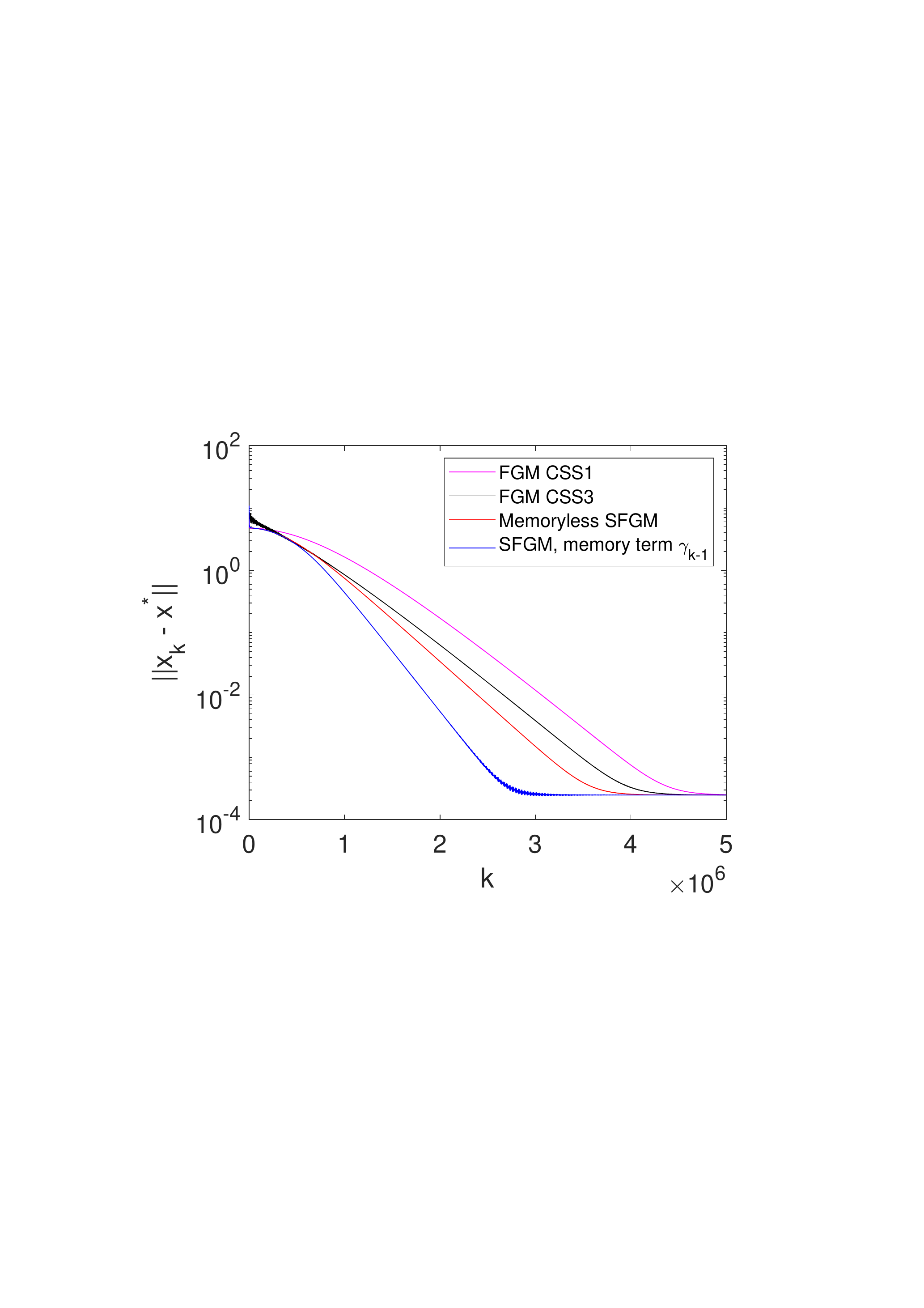} 		\label{num_sec_fig_3_sub_3}\vspace{-8mm}
			\caption{The dataset is ``a1a'' and $\tau = 10^{-7}$.}
		\end{subfigure}
		% \vfill
		\begin{subfigure}[h]{0.49\columnwidth}
			\centering
			\includegraphics[width=\columnwidth,height = \linewidth, trim={3cm 9.5cm 3cm 8cm}]{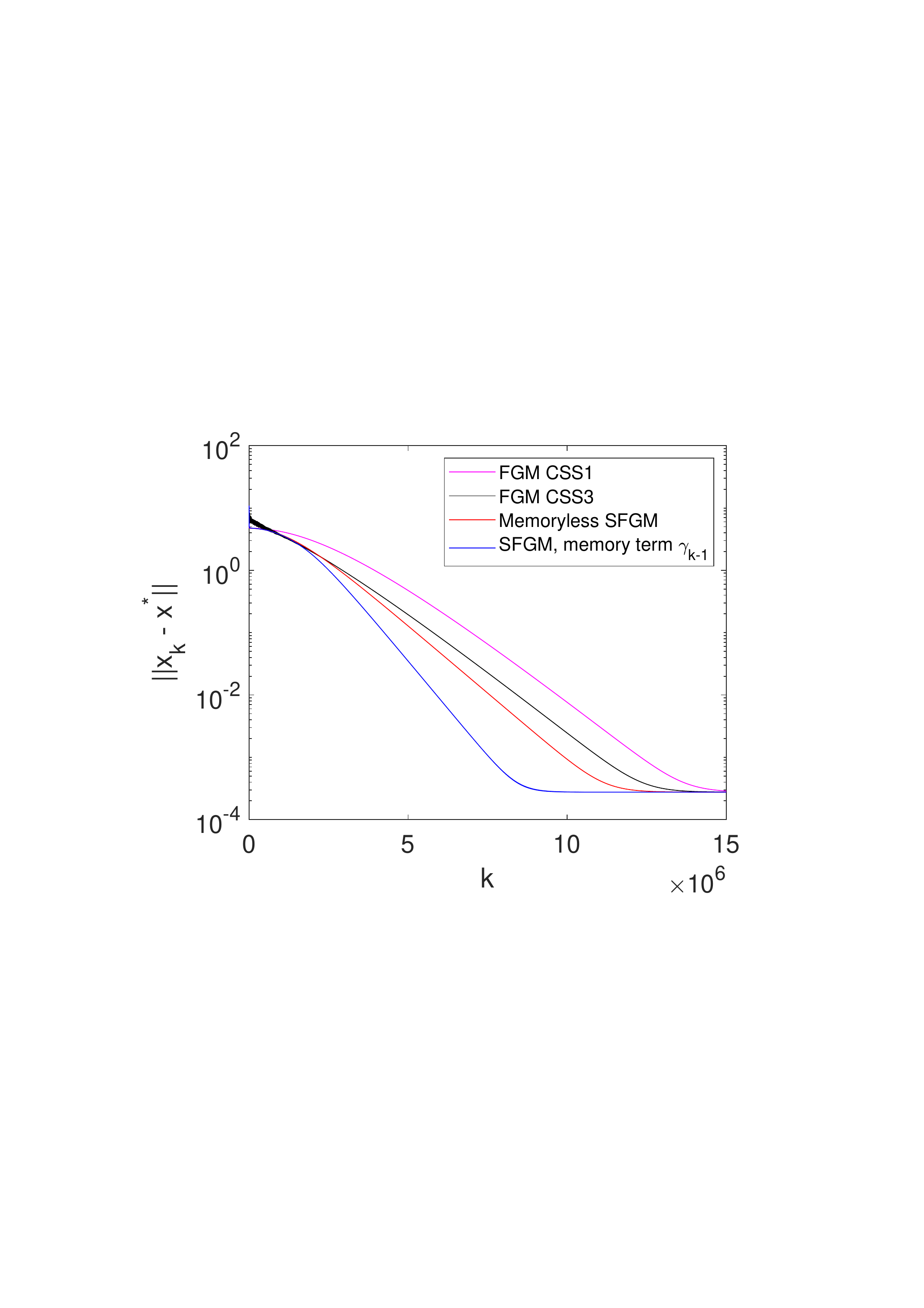} 		\label{num_sec_fig_3_sub_4}\vspace{-8mm}
			\caption{The dataset is ``a1a'' and $\tau = 10^{-8}$.} 
		\end{subfigure}
		\caption{Comparison between the efficiency of algorithms tested in minimizing the regularized quadratic loss function in the case where $m > n$, i.e., $A$ is a tall matrix.}
		\label{num_sec_fig_3}
	\end{figure}
	
	\vspace{-3mm}
	
	%%%%%%%%%%%%%%%%%%%%%%%%%%%%%%%%%%%%%%%
	\subsection{Decreasing the norm of the gradient}
	\label{Decreasing the norm of the gradient}
	\vspace{-1mm}
	%%%%%%%%%%%%%%%%%%%%%%%%%%%%%%%%%%%%%%%
	In many practical problems, it is of high interest to find points with small norm of the gradient, that is, 			\vspace{-3mm}
	\begin{equation}
		\label{grad____}
		||\nabla f(x)|| \leq \eta, 			\vspace{-3mm}
	\end{equation}
	where $\eta$ denotes the desired tolerance. In \cite{Nesterov_optima, Nesterov_book}, it is shown that FGM is not optimal in this sense. Instead, minimizing a regularized version of the objective function, which results in a reduction of the iteration complexity to $\mathcal{O} \sqrt{\frac{LR}{\epsilon}} \text{ln} \left(\frac{LR}{\epsilon}\right)$ is suggested therein. From this perspective, utilizing the construction of $\psi_k (x)$ proposed in \eqref{psii} in Definition \ref{def__1}, we can see that SFGM is minimizing a regularized version of the objective function. Moreover, when the generalized estimating sequences framework is used, it also provides the regularizer term, which consists of linear combinations of the previously constructed scanning functions weighted such that \eqref{psi_bound_} is satisfied. In the sequel, we show that the simplest versions of SFGM are more efficient than FGM in decreasing the norm of the gradient. 
	
	An example of the efficiency of SFGM in decreasing the norm of the gradient for minimizing the quadratic loss function has already been shown in Figs. \ref{num_sec_fig_1}(c) and \ref{num_sec_fig_1}(d). To diversify the nature of the problems solved, for these computational experiments, we consider the regularized logistic loss problem 
	\begin{equation}
		\begin{aligned}
			\label {num_eq_2}
			& \underset{x \in \mathcal{R}^n}{\text{minimize}}
			& &\frac{1}{m} \sum\limits_{i=1}^m \text{log} \left( 1 + \text{e}^{-b_i x a_i} \right) + \frac{\tau}{2} ||x||^2.  
		\end{aligned}
	\end{equation} 
	For this problem type, we reuse the datasets ``colon-cancer'' and ``a1a'', which were introduced in Section \ref{linear_regression}. We set $\tau \in \{10^{-5}, 10^{-7}\}$ for the ``colon-cancer'' dataset, and $\tau \in \{10^{-6}, 10^{-8}\}$ for the ``a1a'' dataset. The results are reported in Fig. \ref{num_sec_fig_4}. We can observe from Fig. \ref{num_sec_fig_4} that SFGM outperforms FGM for both datasets. Specifically, SFGM with memory term $\gamma_{k-1}$ is approximately $35 \% - 40 \%$ faster at decreasing the norm of the gradient than FGM CSS3.
	\begin{figure}
		\centering
		\begin{subfigure}[h]{0.49\columnwidth}
			\centering
			\includegraphics[width=1\columnwidth,height = \linewidth, trim={3cm 9.5cm 3cm 7.5cm}]{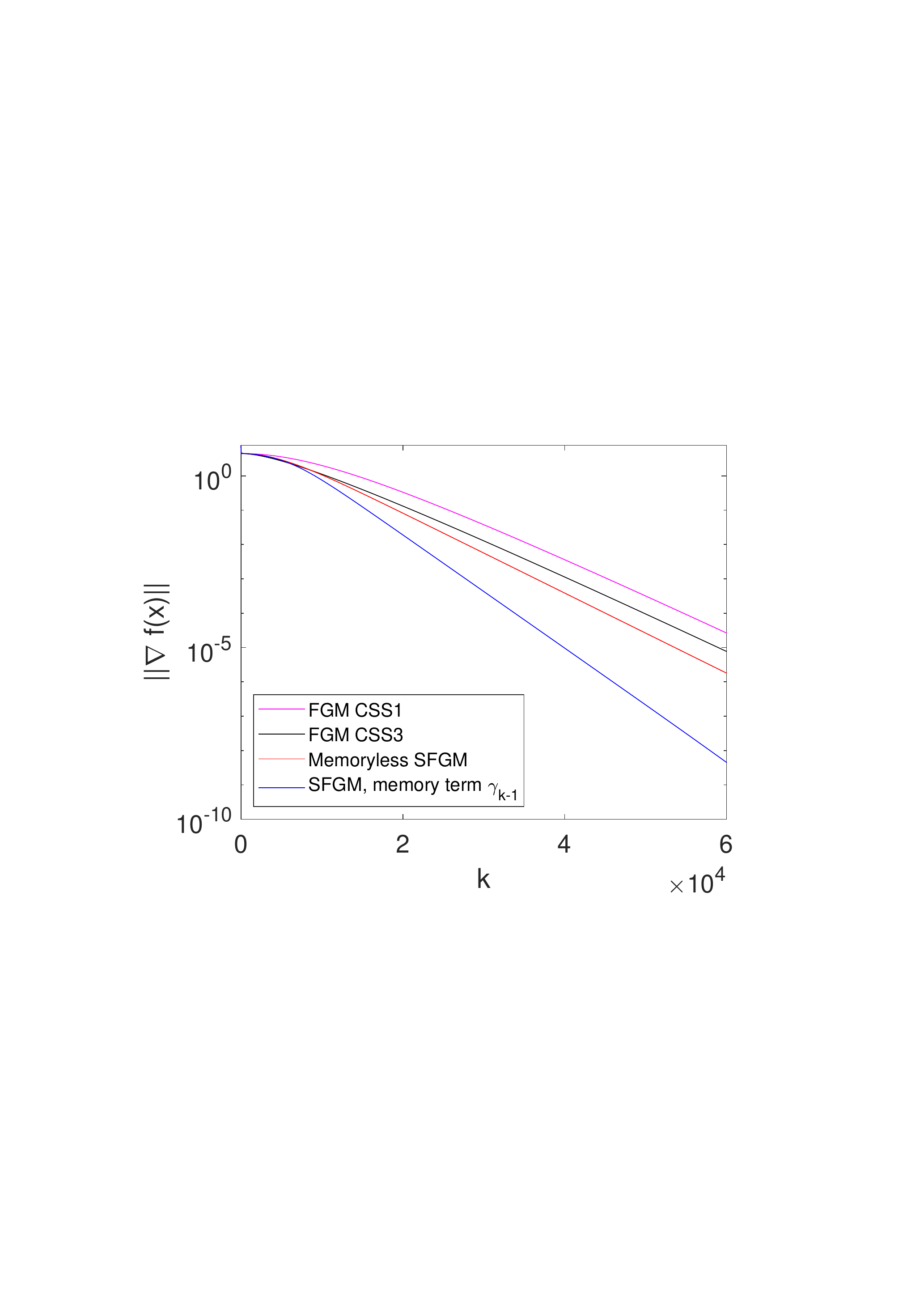} 		\label{num_sec_fig_4_sub_3}  \vspace{-8mm}
			\caption{The dataset is ``colon-cancer'' and $\tau = 10^{-5}$.} \vspace{-8mm}
		\end{subfigure}
		% \vfill
		\begin{subfigure}[h]{0.49\columnwidth}
			\centering
			\includegraphics[width=\columnwidth,height = \linewidth, trim={3cm 9.5cm 3cm 7.5cm}]{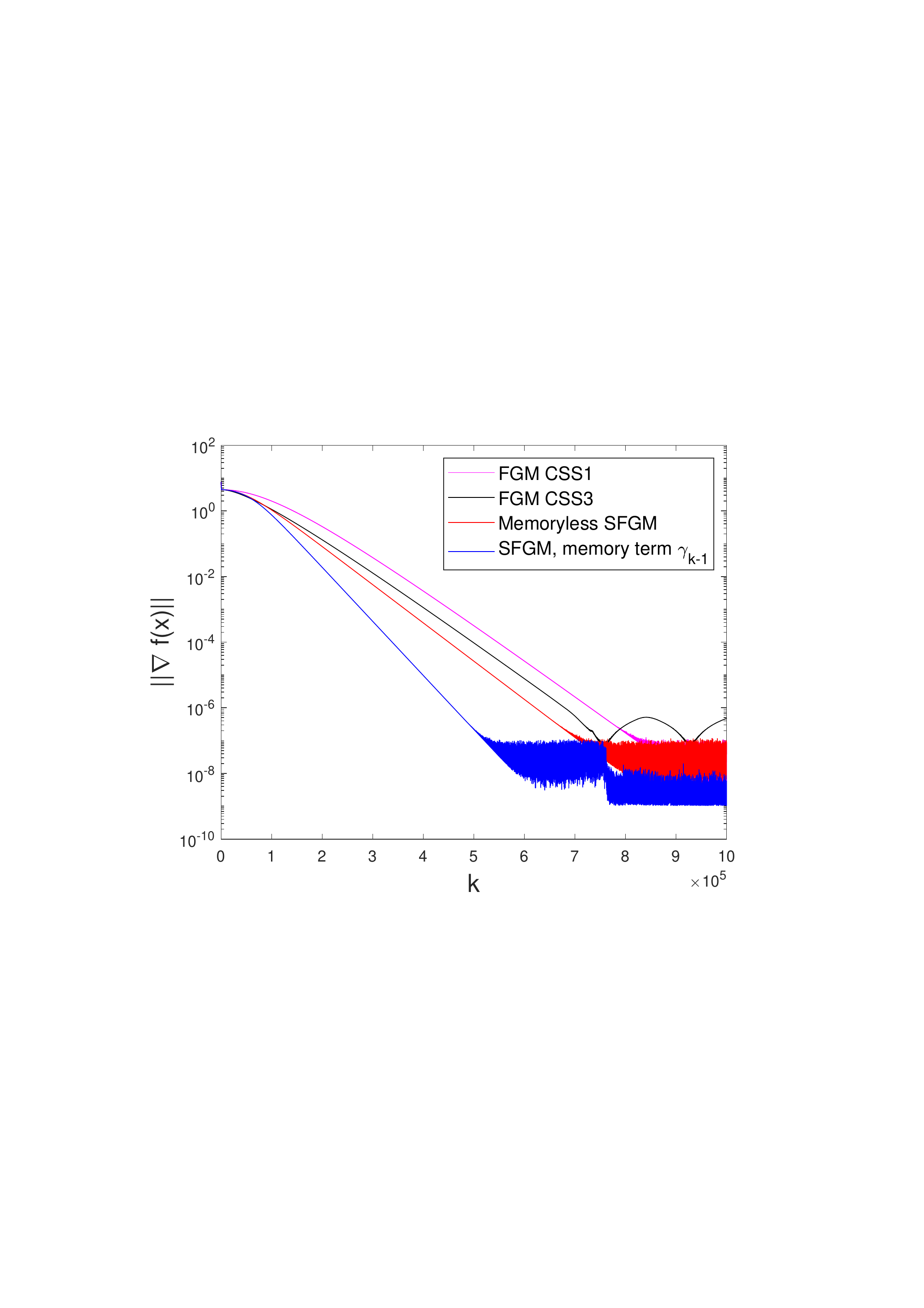} 		\label{num_sec_fig_4_sub_4} \vspace{-8mm}
			\caption{The dataset is ``colon-cancer'' and $\tau = 10^{-7}$.}  \vspace{-8mm}
		\end{subfigure}
		\begin{subfigure}[h]{0.49\columnwidth}
			\centering
			\includegraphics[width = \columnwidth,height = \linewidth, trim={3cm 9.5cm 3cm 7.5cm}]{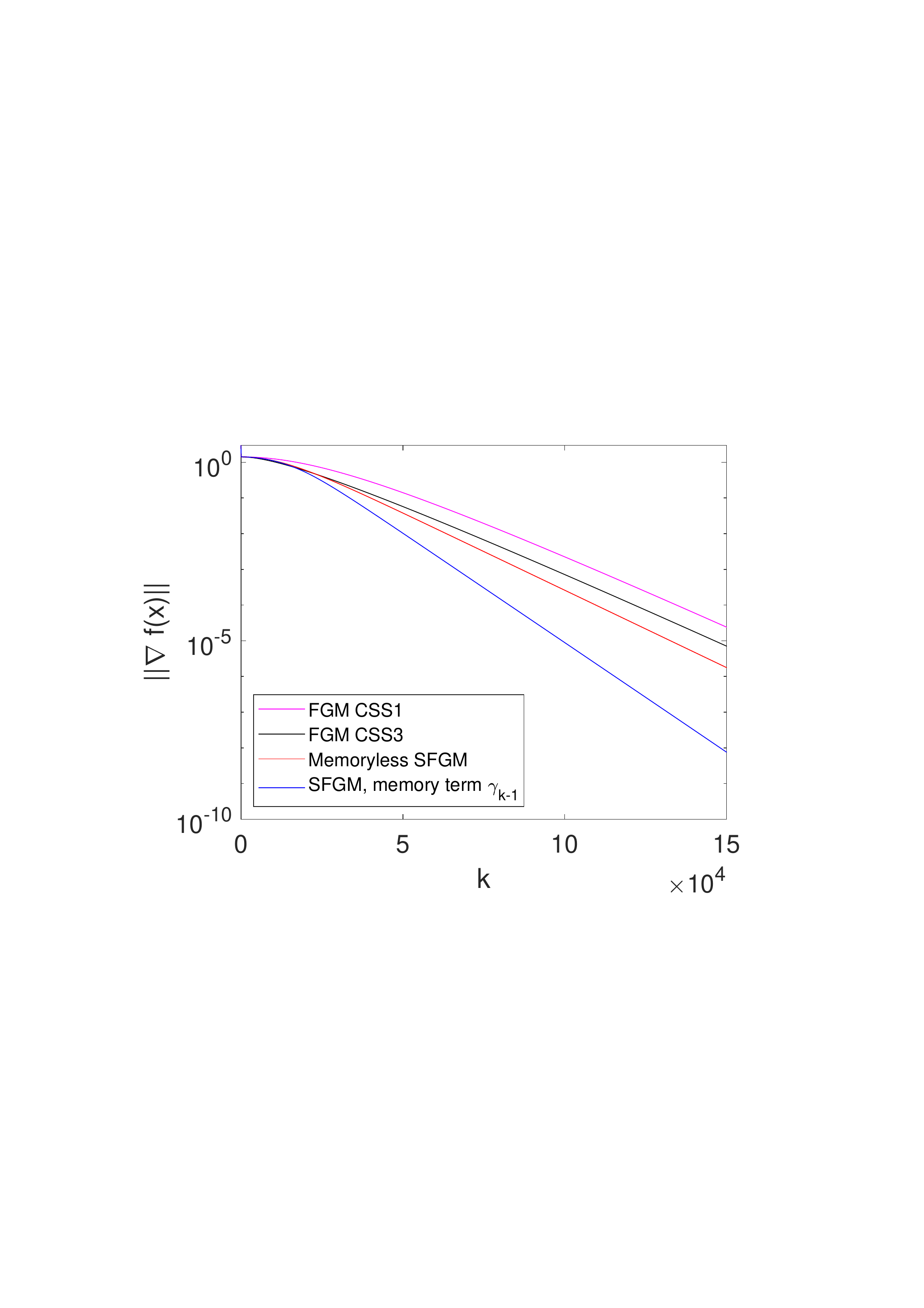} 		\label{num_sec_fig_4_sub_1} \vspace{-8mm}
			\caption{The dataset is ``a1a'' and $\tau = 10^{-6}$.} 
		\end{subfigure}
		% \vfill
		\begin{subfigure}[h]{0.49\columnwidth}
			\centering
			\includegraphics[width=\columnwidth,height = \linewidth, trim={3cm 9.5cm 3cm 7.5cm}]{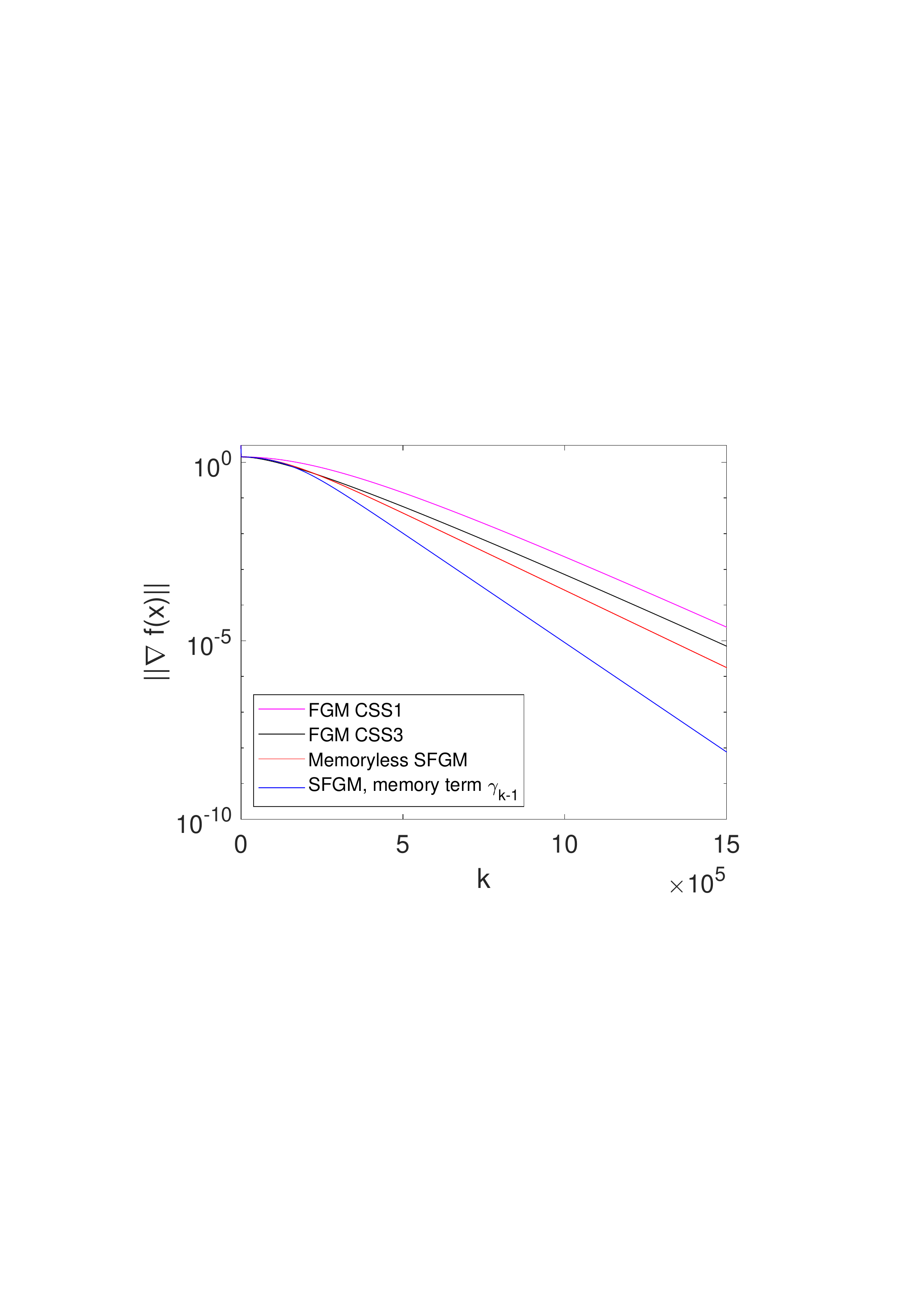} 		\label{num_sec_fig_4_sub_2} \vspace{-8mm}
			\caption{The dataset is ``a1a'' and $\tau = 10^{-8}$.} 
		\end{subfigure}
		\caption{Comparison between the efficiency of algorithms tested in minimizing the regularized logistic loss function for various datasets.}
		\label{num_sec_fig_4}
	\end{figure}

	\vspace{-4mm}
	
	%%%%%%%%%%%%%%
	\section{Conclusion and Discussion}
	\label{Discussion}
	%%%%%%%%%%%%%%
	{The way for embedding a new form of heavy-ball momentum into Nesterov's acceleration framework has been rigorously established, and shown to be of practical significance for solving typical signal processing problems.} %{It is also rigorously proven that utilizing the two acceleration principles results in schemes that minimize regularized versions of the objective function.} 
	The faster convergence (than FGM) of the proposed accelerated algorithm that we name SFGM is established analytically and demonstrated through simulations and real data analysis. One more novelty important for this venue is that we also provide intuition on the design of accelerated methods based on the example of the proposed SFGM, which was, in fact, necessary for our objective of deriving new methods that result from the embedding/coupling of different acceleration principles in one scheme.

	We conclude this work by introducing several open problems that arise from our proposed framework. Several interesting questions that arise are related to the design of the sequence $\{\psi_k(x)\}_{k=0}^\infty$. Considering the construction of $\psi_k(x)$ in \eqref{psii}, the most interesting issue is finding the optimal (in some sense) selection of the coefficients $\beta_{i,k}$. This would produce the optimal regularizers for the objective function, which would result in faster algorithms. These optimal regularizers can be utilized to construct methods that are optimal in the sense of decreasing the norm of the gradient (see also the discussion in Section~\ref{Decreasing the norm of the gradient}). The latter topic has gathered significant attention in the recent years, particularly with the advances in nonconvex optimization \cite{ghadimi2016accelerated, Lower_Bounds, liang2019average}, wherein the goal is to find stationary points of the function that satisfy \eqref{grad____}. 
	
	Another topic of interest is related to devising alternative candidate structures for the term $\psi_k (x)$, which can ideally encompass both black and white box information about the objective function. This idea is inspired by the work in \cite{doi:10.1137/16M1099546}, wherein the authors develop the notions of relative smoothness and relative strong convexity. %They provide examples of differentiable convex functions, which do not satisfy the uniform smoothness condition, i.e. Lipschitz constant with finite value, which can be smooth and strongly convex relative to a simpler function $h(x)$. 
	For twice differentiable functions, the relative smoothness and strong convexity parameters depend on the weighted difference of the Hessians of the cost function and $h(x)$ \cite[Proposition 1.1]{doi:10.1137/16M1099546}. Apparently, a similar approach was also developed here in establishing \eqref{psi_bound_}, with the main difference being that in our case $\psi_k(x)$ is dynamically changing over iterations. From the perspective of the framework introduced in \cite{doi:10.1137/16M1099546}, the result obtained in \eqref{psi_bound_} suggests that the relative strong convexity parameter between $f(x)$ and $\psi_k(x)$ is not unique. Instead, it is contained in an interval which shrinks over iterations, and as $k \rightarrow \infty$, it is contained in $[0,1]$. Thus, it is of interest to study how the two frameworks can be coupled. 
	
	Another strategy that is known to improve the performance of FGM is restarting. %the method. 
	Several restarting conditions have been introduced \cite{candes_donoghue, 7040179}. These conditions can also be applied to SFGM and improve its performance as well. In this work, we purposely avoided relying on heuristics like restarting for further improving the performance of SFGM. Nevertheless, it is of interest to establish restarting conditions applicable to SFGM. Lastly, it would be of interest to investigate extensions of the proposed framework to solve nonsmooth optimization problems. To solve such problems, several variations of FGM already exist \cite{Nesterov_2007, FISTA, Tseng}. 
	
	\vspace{-4mm}
	
	%%%%%%%%%%%%%%
	\section*{Acknowledgments}
	\vspace{-1mm}
	%%%%%%%%%%%%%%
	We would like to thank Professor Yurii Nesterov for his suggestions and fruitful discussions about the early version of the article.
	\vspace{-5mm}
	
	\appendices
	\section{Proof of Lemma 1}
	\label{Proof of Lemma 1}
	%%%%%%%%%%%%%%%%%%%%%%%%%%%%%%%%%%%%%%%%%%%%%%%%%%%%%%%%%%%%
	By assumption that is made in the formulation of the lemma, we can write
	\begin{align}
		f(x_k) \leq \Phi_{k}^* &= \underset{x \in {\mathcal{R}^n}}{\text{min}} \Phi_{k} (x) \\ \nonumber &\stackrel{\eqref{def_1}}{\leq} 
		\underset{x \in {\mathcal{R}^n}}{\text{min}} \left[\lambda_{k} \Phi_{0}(x^*) + (1 - \lambda_{k}) \left(f(x) - \psi_k(x)\right) \right] \\ \nonumber
		&\leq \left[ \lambda_{k} \Phi_{0}(x^*) + (1 - \lambda_{k}) \left(f(x^*) - \psi_k(x^*)\right) \right].
	\end{align}
	Rearranging the terms yields the desired result. 
	
	\section{Proof of Lemma 2} 
	\label{Proof of Lemma 2}
	%%%%%%%%%%%%%%%%%%%%%%%%%%%%%%%%%%%%%%%%%%%%%%%%%%%%%%%%%%%%
	We prove the lemma by induction. At iteration $k = 0$, since $\psi_0(x) = 0$, $\Psi_0 = 0$ and $\lambda_0 = 1$, utilizing \eqref{def_1} in the paper, we have $\Phi_0(x) \leq \lambda_0 \Phi_0(x) + \left( 1 - \lambda_0 \right) f(x) \equiv \Phi_0 (x)$. Next, we assume that at some iteration $k$, \eqref{def_1} holds true, which yields
	\begin{align}
		\label{useful}
		\Phi_k(x) - \left(1 - \lambda_k \right) f(x) \leq \lambda_k \Phi_0 (x) - \left(1 - \lambda_k \right) \psi_k (x).
	\end{align}
	
	Utilizing \eqref{lower_bound} and \eqref{phi_k+1_SFGM}, at iteration $k + 1$ we can write  
	\begin{align}
		\label{crappy}
		\Phi_{k+1} (x) &\leq (1 - \alpha_k) \left(\Phi_{k} (x) + \psi_k(x) \right) - \psi_{k+1} (x) - \Psi_k  + \alpha_k \left( f(x) + \psi_k (x) \right).
	\end{align}
	Then, utilizing \eqref{Psi_definition}, adding and subtracting the same term to the right hand side (RHS) of \eqref{crappy}, we have 
	\begin{align}
		\Phi_{k+1} (x) &\leq (1 \! - \! \alpha_k) \Phi_{k} (x) \! - \! \psi_{k+1} (x) \! + \! \alpha_k f(x) + (1 \! - \! \alpha_{k}) (1 \! - \! \lambda_{k}) f(x) \! - \! (1 \! - \! \alpha_{k}) (1 \! - \! \lambda_{k}) f(x) \\ \label{3.8}
		&= (1 - \alpha_k) \left[\Phi_{k} (x) - (1 - \lambda_{k}) f(x)\right] - \psi_{k+1}(x) + \left(\alpha_k + (1 - \lambda_{k}) (1 - \alpha_k) \right) f(x).
	\end{align}
	Utilizing \eqref{useful} in \eqref{3.8}, we obtain
	\begin{align}
		\label{mmmm}
		\Phi_{k+1} (x) \! + \! \psi_{k+1} (x) \! &\leq \! (1 \! - \! \alpha_k) \! \left(\lambda_{k} \Phi_{0} (x) - \left(1 \! - \! \lambda_k \right) \psi_k (x) \right) + (1 - \lambda_{k} + \alpha_{k} \lambda_{k}) f(x).
	\end{align}
	Then, from the recursive relation \eqref{lambda_recursive}, and also by relaxing the RHS of \eqref{mmmm}, we reach
	\begin{align}
		\label{mm}
		\Phi_{k+1} (x) + \psi_{k+1} (x) \leq \lambda_{k+1} \Phi_{0} (x)  + (1 - \lambda_{k+1})f(x).
	\end{align}
	Finally, utilizing the fact that $\lambda_k \in [0,1]$, we obtain
	\begin{align}
		\label{m}
		\Phi_{k+1} (x) \leq \lambda_{k+1} \Phi_{0} (x)  + (1 - \lambda_{k+1}) \left(f(x) - \psi_{k+1}(x) \right).
	\end{align}

	%%%%%%%%%%%%%%%%%%%%%%%%%%%%%%%%%%%%%%%%%%%%%%%%%%%%%%%%%%%%
	\section{Proof of Lemma 3}
	\label{Proof of Lemma 3}
	%%%%%%%%%%%%%%%%%%%%%%%%%%%%%%%%%%%%%%%%%%%%%%%%%%%%%%%%%%%%
	Let us begin by establishing that \eqref{phi_k+1_SFGM} preserves the quadratic structure of the terms in the sequence $\{\Phi_k\}_{k=0}^\infty$. Note that at step $k = 0$, we have $\psi_0 = 0$. Therefore, $\nabla^2 \Phi_{0} (x) = \nabla^2 \phi_{0} (x) = \gamma_0 I$. Next, let us assume that for some step $k$, we have $\nabla^2 \Phi_k(x) = \gamma_k - \sum_{i = 0}^{k - 1} \beta_{i,k} \gamma_{i} \stackrel{\eqref{psi_bound_}}{\geq} 0$. Then, by considering the Hessian of \eqref{phi_k+1_SFGM}, we can write 
	\begin{align}
		\label{tt}
		\nabla^2 \Phi_{k+1} (x) \! &\stackrel{}{=} \! (1 \!- \! \alpha_k) \gamma_{k}  I \!- \! \sum_{i = 0}^{k} \beta_{i,k} \gamma_i I \!  + \! \alpha_k \! \left( \! \! \mu \! + \! \sum_{i = 0}^{k - 1} \! \beta_{i,k} \gamma_{i} \! \! \right) \! \! I \! . 
	\end{align}
	Utilizing \eqref{gamma_expr} in \eqref{tt} we obtain 
	\begin{align}
		\label{ttt}
		\nabla^2 \Phi_{k+1} (x) &\stackrel{}{=} \gamma_{k+1} I - \sum_{i = 0}^{k} \beta_{i,k} \gamma_i I. 
	\end{align}
	%\ED{Does it cause problems with the inductive argument that we prove from indexes k and k-1 (from assumption at step k), to indexes k+1 and k. }
	Lastly, we note that selecting the terms $\beta_{i,k}$ to satisfy \eqref{psi_bound_} ensures that $\nabla^2 \Phi_{k+1} (x) \geq 0$. 
	
	We proceed now to establishing the recursive relation for the terms in the sequence $\{v_k\}_{k=0}^\infty$. Let us start by substituting our proposed construction for the scanning function presented in \eqref{scan_funct_k_SFGM} into \eqref{phi_k+1_SFGM}, and making the necessary manipulations we obtain
	\begin{align}
		\label{FDD}
		\phi_{k+1}^* + \frac{\gamma_{k+1}}{2}||x - v_{k+1}||^2 &= \! (1 \! - \! \alpha_k) \! \left( \! \phi_{k}^* + \frac{\gamma_{k}}{2}||x - v_{k}||^2 \! \right) \! - \! \Psi_k \! \\ \nonumber &+ \! \alpha_{k} \! \left( \! f(y_k) + \nabla f(y_k)^T (x - y_k)  + \frac{\mu}{2} ||x - y_k||^2 + \psi_{k} (x) \right).
	\end{align}
	First, observe that both the LHS and the RHS of \eqref{FDD} are convex functions in $x$, and minimizing them over all possible values of $x$ yields two unconstrained optimization problems. Therefore, the solution needs to satisfy the optimality condition for unconstrained problems, which is that the gradient of the objective function with respect to the optimization parameter has to be equal to $0$. Taking gradients with respect to $x$, we have
	\begin{align}
		\label{SFGM_opt_cond}
		\begin{split}
			\gamma_{k+1}(x \! - \! v_{k+1}) \! &= \gamma_k (1-\alpha_k)(x - v_{k}) + \alpha_k \left(\mu (x - y_k) + \nabla f(y_k) + \sum_{i = 0}^{k - 1} \beta_{i,k} \gamma_{i} (x - v_{i}) \right).
		\end{split}
	\end{align}
	For now, assume that the points $y_k$ are known and the points $x$ are unknown. By utilizing \eqref{gamma_expr}, we can reduce the unknown points $x$ in \eqref{SFGM_opt_cond}. Then, after making some manipulations we obtain 
	\begin{align}
		\label{vvv}
		\begin{split}
			v_{k+1}  &= \frac{1}{\gamma_{k+1}}\left((1-\alpha_k)\gamma_k v_{k} + \mu \alpha_k \left(y_k - \frac{1}{\mu}\nabla f(y_k) + \sum_{i = 0}^{k - 1} \frac{\beta_{i,k} \gamma_{i}}{\mu} v_{i}\right)\right).
		\end{split}
	\end{align}
	
	Notice that the sequence $\{v_k\}_{k=0}^\infty$ depends on the sequence $\{y_k\}_{k=0}^\infty$, which is assumed to be known up to this point. We will show later how this value can be computed recursively. For now, let us focus on finding the smallest value of the scanning function, $\phi_{k+1}^*$. On a conceptual level, the simplest way to compute $\phi_{k+1}^*$ is to think that there is another scanning function $\Theta_k(y_k)$ for the sequence $\{y_k\}_{k=0}^\infty$, which has the same center and radius and  as the sequence of functions $\{\Phi_{k}(x)\}_{k=0}^\infty$. So, we have
	\begin{equation}
		\Theta_{k}(y_k) = \theta_{k}^* + \frac{\gamma_{k}}{2}||y_k - v_{k}||^2-  \sum_{i = 0}^{k-1} \beta_{i,k} \frac{\gamma_{i}}{2} ||y_k - v_{i}||^2, \; \forall k. \label{y_seq_scan_funct_k_SFGM}
	\end{equation}
	Then, utilizing \eqref{phi_k+1_SFGM} applied at the points $x = y_k$, we have
	\begin{align}
		\label{psi_k+1 bound}
		\Theta_{k+1} (y_k) &= (1 - \alpha_k) \left(\Theta_{k} (y_k) + \psi_k (y_k) \right) - \psi_{k+1} (y_k) - \Psi_k + \alpha_{k} \left( f(y_k) + \psi_{k} (y_k) \right).
	\end{align}
	Substituting \eqref{psii} and \eqref{y_seq_scan_funct_k_SFGM} into \eqref{psi_k+1 bound}, as well as making the necessary relaxations, we obtain
	\begin{align}
		\label{gen_conditions_thetas}
		\theta_{k+1}^* \! \! + \! \frac{\gamma_{k+1}}{2}||y_k \! - \! v_{k+1}||^2  \! &\leq (1 \! - \! \alpha_k) \! \left(\theta_k^*  + \frac{\gamma_k}{2}||y_k - v_k||^2 \right) + \! \alpha_k \! \! \left(\! \! f(y_k) \! + \! \sum_{i = 0}^{k-1} \! \frac{\beta_{i,k} \gamma_{i}}{2} \! ||y_k \! - \! v_{i}||^2 \! \right) \! \! .
	\end{align}

	From the recursive relation \eqref{vvv}, we have
	\begin{align}
		\label{gg}
		\begin{split}
			v_{k+1} - y_k  &= \frac{1}{\gamma_{k+1}}\left((1-\alpha_k)\gamma_k v_{k} + \mu \alpha_k \left(y_k - \frac{1}{\mu}\nabla f(y_k) + \sum_{i = 0}^{k - 1} \frac{\beta_{i,k} \gamma_{i}}{\mu} v_{i}\right) - \gamma_{k+1} y_k \right).
		\end{split}
	\end{align}
	Then, substituting the recursive relation for the term $\gamma_{k+1}$, i.e., \eqref{gamma_expr} into \eqref{gg}, yields
	\begin{align}
		\label{ggg}
		\begin{split}
			v_{k+1} - y_k  &= \frac{1}{\gamma_{k+1}}\left((1-\alpha_k)\gamma_k \left(v_{k} - y_k \right) - \alpha_k \nabla f(y_k) + \alpha_k \sum_{i = 0}^{k - 1} \frac{\beta_{i,k} \gamma_{i}}{\mu} \left(v_{i} - y_k \right) \right).
		\end{split}
	\end{align}
	Taking $||\cdot||^2$ of both sides in \eqref{ggg}, we obtain 
	\begin{align}
		\label{more_eq}
		||v_{k+1} - y_k||^2 = \frac{||\left(\gamma_k(1-\alpha_k)(v_{k} - y_k )\right) \! + \! \alpha_k \! \sum_{i = 0}^{k - 1} \beta_{i,k} \gamma_{i} (v_{i} - y_k) - \alpha_k \nabla f(y_k) ||^2}{\gamma_{k+1}^2}.
	\end{align} 
	Then, multiplying both sides of \eqref{more_eq} by $\frac{\gamma_{k+1}}{2}$, and expanding the RHS, we obtain 
	\begin{align}
		\label{fgm_eq_1}
		\frac{\gamma_{k+1}}{2}||v_{k+1} - y_k||^2 &= \frac{(1-\alpha_k)^2\gamma_k^2}{2 \gamma_{k+1}} ||v_{k} - y_k||^2 + \frac{\alpha_k^2}{2 \gamma_{k+1}} ||\sum_{i = 0}^{k - 1} \! \beta_{i,k} \gamma_{i} (v_{i} - y_k) ||^2 \\ \nonumber &- \frac{2 \alpha_k (1-\alpha_k)\gamma_{k}}{2 \gamma_{k+1}} (v_{k} - y_k)^T\nabla f(y_k)   + \! \frac{\alpha_{k}^2}{2 \gamma_{k+1}} ||\nabla f(y_k) ||^2 \!  \\ \nonumber &+ \! \frac{(1 \! - \! \alpha_k) \alpha_{k} \gamma_{k}}{\gamma_{k+1}} \! \sum_{i = 0}^{k - 1} \! \beta_{i,k} \gamma_{i} (v_{i} \! - \! y_k)^T \! (v_{k} \! - \! y_k) \! - \! \frac{\alpha_k^2}{\gamma_{k+1}} \! \sum_{i = 0}^{k - 1} \! \beta_{i,k} \gamma_{i} (y_k - v_{i})^T  \! \nabla f(y_k)  \! . 		
	\end{align} 
	
	Substituting \eqref{fgm_eq_1} into \eqref{gen_conditions_thetas} and doing the respective factorings, we obtain 
	\begin{equation} 
		\label{fgm_eq_3}
		\begin{split}
			\theta_{k+1}^* &\leq \alpha_k f(y_k) + (1-\alpha_k) \theta_{k}^* + \frac{(1-\alpha_{k}) \gamma_{k}}{2}\left[\frac{\gamma_{k+1}}{\gamma_{k+1}} - \frac{(1 - \alpha_{k}) \gamma_{k}}{\gamma_{k+1}}\right] ||y_k - v_{k}||^2 \\ &- \frac{\alpha_{k}^2}{2 \gamma_{k+1}} ||\sum_{i = 0}^{k-1} \frac{\beta_{i,k} \gamma_{i}}{2}(y_k - v_{i}) ||^2 - \frac{\alpha_k^2}{2 \gamma_{k+1}} ||\nabla f(y_k) ||^2 + \frac{\alpha_k^2}{\gamma_{k+1}} \sum_{i = 0}^{k - 1} \beta_{i,k} \gamma_{i} (v_{i} - y_k) ^T \nabla f(y_k) \\ &+ \frac{\alpha_k (1 \! - \! \alpha_k)\gamma_k}{\gamma_{k+1}} \left( \! (v_{k} \! - \! y_k)^T \! \nabla f(y_k) \! - \! \! \sum_{i = 0}^{k - 1} \! \beta_{i,k} \gamma_{i} (y_k \! - \! v_{i})^T \! (y_k \! - \! v_{k})\right) \! + \! \alpha_{k} \! \sum_{i = 0}^{k-1} \! \frac{\beta_{i,k} \gamma_{i}}{2} ||y_k - v_{i} ||^2 \! \!.
		\end{split}
	\end{equation}
	Making some further manipulations and relaxing the upper bound on $\theta_{k+1}^*$ in \eqref{fgm_eq_3} yields 
	\begin{equation}
		\label{fgm_eq_4}
		\begin{split}
			\theta_{k+1}^* \! &\leq \! \alpha_k f(y_k) \! + \! (1 \! - \! \alpha_k) \theta_{k}^* \! + \! \frac{\alpha_{k} \gamma_{k}(1-\alpha_{k}) (\mu + \sum_{i = 1}^{k-1} \beta_{i,k} \gamma_{i})}{2\gamma_{k+1}} ||y_k - v_{k}||^2 \! \\ &+ \! \alpha_{k} \! \sum_{i = 0}^{k-1} \! \frac{\beta_{i,k} \gamma_{i}}{2} ||y_k - v_{i} ||^2 - \frac{\alpha_k^2}{2 \gamma_{k+1}} ||\nabla f(y_k) ||^2 \\ &+ \frac{\left(1 - \alpha_{k}\right)\alpha_k^2}{\gamma_{k+1}} \sum_{i = 0}^{k - 1} \beta_{i,k} \gamma_{i} (v_{i} - y_k) ^T \nabla f(y_k) + \frac{\alpha_k^3}{\gamma_{k+1}} \sum_{i = 0}^{k - 1} \beta_{i,k} \gamma_{i} (v_{i} - y_k) ^T \nabla f(y_k) \\ &+ \frac{\alpha_k (1-\alpha_k)\gamma_k}{\gamma_{k+1}} \left( (v_{k} - y_k)^T\nabla f(y_k) - \sum_{i = 0}^{k - 1} \beta_{i,k} \gamma_{i} (y_k - v_{i})^T \left(y_k - v_{k}\right) \right).
		\end{split}
	\end{equation}
	Then, utilizing the Cauchy-Schwartz inequality in \eqref{fgm_eq_4}, as well as relaxing the upper bound, we obtain 
	\begin{equation}
		\label{fgm_eq_5}
		\begin{split}
			\theta_{k+1}^* &\stackrel{}{\leq} \alpha_k f(y_k) + (1-\alpha_k) \theta_{k}^* + \frac{\alpha_{k} \gamma_{k}(1-\alpha_{k}) (\mu + \sum_{i = 1}^{k-1} \beta_{i,k} \gamma_{i})}{2\gamma_{k+1}} ||y_k - v_{k}||^2 \\ &+ \alpha_{k} \sum_{i = 0}^{k-1} \frac{\beta_{i,k} \gamma_{i}}{2} ||y_k - v_{i} ||^2 - \frac{\alpha_k^2}{2 \gamma_{k+1}} ||\nabla f(y_k) ||^2 + (1-\alpha_k)\frac{\gamma_{k}}{2}||x_{\Phi_k}^* \! - \! v_{k}||^2  \\ &+ \frac{\left(1 - \alpha_{k}\right)\alpha_k^2}{\gamma_{k+1}} \sum_{i = 0}^{k - 1} \beta_{i,k} \gamma_{i} (v_{i} - y_k) ^T \nabla f(y_k) + \frac{\alpha_k^3}{\gamma_{k+1}} \sum_{i = 0}^{k - 1} \beta_{i,k} \gamma_{i} ||v_{i} - y_k||\;  ||\nabla f(y_k)||  \\ &+ \frac{\alpha_k (1 \! - \! \alpha_k)\gamma_k}{\gamma_{k+1}} \left( \! \left(v_{k} \! - \! y_k\right)^T \! \nabla f(y_k) \! + \! \sum_{i = 0}^{k - 1} \! \beta_{i,k} \gamma_{i} ||y_k \! - \! v_{i}|| \; ||y_k \! - \! v_{k}||\!\right) \! + \! \sum_{i = 0}^{k-1} \! \beta_{i,k} \frac{\gamma_{i}}{2} ||x_{\Phi_k}^* \! - \! v_{i}||^2 \!\!. 
		\end{split}
	\end{equation}
	Lastly, since we would like the scanning function to be as close as possible to the objective function itself, we let $\theta_{k+1}$ equal to the tightest upper bound we can obtain analytically. Moreover, as discussed earlier, we let $\phi_{k}^* = \theta_k^*, \; \forall k = 0, 1, \ldots$. This way we obtain \eqref{psi_{k+1}^*}.
	
	%%%%%%%%%%%%%%%%%%%%%%%%%%%%%%%%%%%%%%%%%%%%%%%%%%%%%%%%%%%%
	\section{Proof of Theorem 1}
	\vspace{-1mm}
	\label{Proof of Theorem 1}
	%%%%%%%%%%%%%%%%%%%%%%%%%%%%%%%%%%%%%%%%%%%%%%%%%%%%%%%%%%%%
	Let $\phi_0^* = f(x_0)$. Then, by construction of the scanning function at iteration $k = 0$, we have $f(x_0) \leq \Phi_0(x) = f(x_0) + \frac{\gamma_0}{2} ||x - x_0||^2$. Moreover, we recall that the update rules of SFGM were devised to maintain the relation $f(x_k) \leq \Phi_k^*$. This is sufficient for the results proved in Lemma~\ref{SFGM_lemma_1} to be applied. 
	
	\vspace{-5mm}
	
	%%%%%%%%%%%%%%%%%%%%%%%%%%%%%%%%%%%%%%%%%%%%%%%%%%%%%%%%%%%%
	\section{Proof of Lemma 4}
	\label{Proof of Lemma 4}
	%%%%%%%%%%%%%%%%%%%%%%%%%%%%%%%%%%%%%%%%%%%%%%%%%%%%%%%%%%%%
	From \eqref{gamma_expr}, we can write
	\begin{align}
		\nonumber
		\gamma_{k+1} \! - \! \left( \! \mu \! + \! \sum_{i = 1}^{k-1} \! \beta_{i,k} \gamma_i \! \right) \! &= \! (1 \! - \! \alpha_k) \gamma_{k} \! + \! \alpha_k \! \left( \! \mu \! + \! \sum_{i = 1}^{k-1} \! \beta_{i,k} \gamma_i \! \right) - \left(\mu + \sum_{i = 1}^{k-1} \beta_{i,k} \gamma_i\right) \\ \label{kkkk}
		&= \! (1 \! - \! \alpha_k) \lambda_0 \! \! \left[ \! \gamma_{k} \! - \! \left( \! \mu \! + \! \sum_{i = 1}^{k-1} \! \beta_{i,k} \gamma_i \! \right) \! \right] \! .
	\end{align}
	Then, utilizing the recursivity of \eqref{gamma_expr} in \eqref{kkkk}, we obtain
	\begin{align}
		\label{FGM_conv_eq_1}
		\gamma_{k+1} \! - \! \left( \! \mu \! + \! \sum_{i = 1}^{k-1} \beta_{i,k} \! \gamma_i \! \right) \! = \! \lambda_{k+1} \! \left[\gamma_{0} - \left(\mu + \sum_{i = 1}^{k-1} \beta_{i,k} \gamma_i\right)\right].
	\end{align}
	Letting $\lambda_{k+1} = (1-\alpha_k) \lambda_k$ and considering \eqref{alpha_k_intuition}, we have
	\begin{align}
		\nonumber
		\alpha_k &= 1 - \frac{\lambda_{k+1}}{\lambda_k} = \sqrt{\frac{\gamma_{k+1}}{L}} = \sqrt{\frac{\mu \! + \! \sum_{i = 1}^{k-1} \! \beta_{i,k} \gamma_i}{L} \! + \! \frac{\gamma_{k+1} \! - \! \left( \! \mu \! + \! \sum_{i = 1}^{k-1} \! \beta_{i,k} \gamma_i \! \right)}{L}} \\ \nonumber
		&\stackrel{\eqref{FGM_conv_eq_1}}{=} \! \sqrt{ \! \frac{\mu \! + \! \sum_{i = 1}^{k-1} \! \beta_{i,k} \gamma_i}{L} \! + \! \lambda_{k+1}\frac{\gamma_{0} \! - \! \left( \! \mu \! + \! \sum_{i = 1}^{k-1} \! \beta_{i,k} \gamma_i \! \right)}{L}}. 
	\end{align}
	Moreover,
	\begin{align}
		\nonumber
		\frac{\lambda_k  - \lambda_{k+1}}{\lambda_k} &= \sqrt{\lambda_{k+1}}  \sqrt{\frac{\mu \! + \! \sum_{i = 1}^{k-1} \! \beta_{i,k} \gamma_i}{\lambda_{k+1} L} \! + \! \frac{\gamma_{0} \! - \! \left( \! \mu \! + \! \sum_{i = 1}^{k-1} \! \beta_{i,k} \gamma_i \! \right)}{L}}, \\ \label{fdfd}
		\frac{\lambda_k - \lambda_{k+1}}{\lambda_k \lambda_{k+1}} &= \frac{1}{\sqrt{\lambda_{k+1}}}  \sqrt{\frac{\mu \! + \! \sum_{i = 1}^{k-1} \! \beta_{i,k} \gamma_i}{\lambda_{k+1} L} \! + \! \frac{\gamma_{0} \! - \! \left( \! \mu \! + \! \sum_{i = 1}^{k-1} \! \beta_{i,k} \gamma_i \! \right)}{L}}.
	\end{align}
	Then, by writing the LHS of \eqref{fdfd} as $\frac{\lambda_k - \lambda_{k+1}}{\lambda_k \lambda_{k+1}} = \frac{1}{\lambda_{k+1}} - \frac{1}{\lambda_k}$, and utilizing a difference of squares argument, we obtain
	\begin{align}
		\label{convergence_stupid}
		\left(\frac{1}{\sqrt{\lambda_{k+1}}} \! - \! \frac{1}{\sqrt{\lambda_{k}}} \right) \! \left(\frac{1}{\sqrt{\lambda_{k+1}}} + \frac{1}{\sqrt{\lambda_{k}}}\right) \! = \! \frac{1}{\sqrt{\lambda_{k+1}}} \sqrt{\frac{\mu \! + \! \sum_{i = 1}^{k-1} \! \beta_{i,k} \gamma_i}{\lambda_{k+1} L} \! + \! \frac{\gamma_{0} \! - \! \left(\mu \! + \! \sum_{i = 1}^{k-1} \! \beta_{i,k} \! \gamma_i \right)}{L}}. 
	\end{align}
	In \eqref{convergence_stupid}, we can lower bound the LHS by replacing $\frac{1}{\sqrt{\lambda_{k}}}$ with the larger number $\frac{1}{\sqrt{\lambda_{k+1}}}$. This results in 
	\begin{align}
		\label{FGM_conv_eq_2}
		\frac{2}{\sqrt{\lambda_{k+1}}} \left(\frac{1}{\sqrt{\lambda_{k+1}}} - \frac{1}{\sqrt{\lambda_{k}}}\right)  &\geq \frac{1}{\sqrt{\lambda_{k+1}}} \sqrt{\frac{\mu + \sum_{i = 1}^{k-1} \beta_{i,k} \gamma_i}{\lambda_{k+1} L} + \frac{\gamma_{0} - \left(\mu + \sum_{i = 1}^{k-1} \beta_{i,k} \gamma_i\right)}{L}}.
	\end{align}
	Now, letting
	\begin{align}
		\label{xi_k_def}
		\xi_k \triangleq \sqrt{\frac{L}{\left[\left(\mu + \sum_{i = 1}^{k-1} \beta_{i,k} \gamma_i\right) - \gamma_{0} \right] \lambda_{k}}},
	\end{align}
	we can rewrite \eqref{FGM_conv_eq_2} as
	\begin{align}
		\label{sfgm_conv_useless}
		\frac{2}{\sqrt{\lambda_{k+1}}} - \frac{2}{\sqrt{\lambda_{k}}}  &\geq \sqrt{\frac{\left(\mu + \sum_{i = 1}^{k-1} \beta_{i,k} \gamma_i\right) - \gamma_{0}}{L \left(\mu + \sum_{i = 1}^{k-1} \beta_{i,k} \gamma_i\right)}} \sqrt{\frac{\mu L}{L \lambda_{k+1} \left(\mu + \sum_{i = 1}^{k-1} \beta_{i,k} \gamma_i - \gamma_{0} \right )} - 1}.
	\end{align}
	Then, multiplying both sides of \eqref{sfgm_conv_useless} by $\sqrt{\frac{L}{\mu + \sum_{i = 1}^{k-1} \beta_{i,k} \gamma_i - \gamma_{0}}}$, we obtain
	\begin{align}
		\label{FGM_conv_eq_3}
		\xi_{k+1} - \xi_k &\geq \frac{1}{2}\sqrt{1 + \frac{\left(\mu + \sum_{i = 1}^{k-1} \beta_{i,k} \gamma_i\right) \xi_{k+1}^2}{L}}.
	\end{align}

	At this point, we make use of induction to prove the following bound on $\xi_k$
	\begin{align}
		\label{FGM_conv_eq_4}
		\xi_k \geq \frac{\sqrt{2}}{4 \delta} \sqrt{\frac{L}{\mu}} \left[e^{(k+1) \delta} - e^{(k+1) \delta}\right],
	\end{align}
	where $\delta \triangleq \frac{1}{2} \sqrt{\frac{\mu + \sum_{i = 1}^{k-1} \beta_{i,k} \gamma_i}{L}}$. At step $k = 0$ we have
	\begin{align}
		\label{loose_bound}
		\xi_0 \! \! \! &\stackrel{\eqref{xi_k_def}}{=} \! \! \! \sqrt{\frac{L}{(\mu + \gamma_{-1} - \gamma_{0}) \lambda_{0}}} \! = \! \sqrt{\frac{L}{\mu - \gamma_{0}}} \geq \frac{1}{2} \sqrt{\frac{L}{\mu}} \left[e^{\frac{\sqrt{2}}{2}} - e^{-\frac{\sqrt{2}}{2}}\right] \geq \frac{\sqrt{2}}{4 \delta} \sqrt{\frac{L}{\mu}} \left[e^\delta - e^{-\delta}\right],
	\end{align}
	where the second equality is obtained from the assumptions made in Lemma~\ref{SFGM_lemma_2}, i.e., $\lambda_{0} = 1$ and $\gamma_{k} = 0$, $\forall k<0$. From \eqref{scan_funct_cond}, we must have $\gamma_{0} \geq 0$. Setting $\gamma_0 = 0$ in \eqref{loose_bound} and multiplying it with a number that is smaller than $1$, we obtain the first inequality. The last inequality in \eqref{loose_bound} follows because right-hand side (RHS) is increasing in $\delta$, which by construction is always $\delta < \frac{\sqrt{2}}{2}$.
	
	Next, we assume that \eqref{FGM_conv_eq_4} holds at iteration $k$ and prove the same result for step $k+1$ via contradiction. Letting $\omega(t) = \frac{1}{4 \delta} \sqrt{\frac{L}{\mu}} \left[e^{(t+1) \delta} - e^{-(t+1) \delta}\right]$, which is a convex function \cite[Lemma 2.2.4]{Nesterov_book}, we have
	\begin{align}
		\label{FGM_conv_eq_5}
		\omega(t) \leq \xi_k \stackrel{\eqref{FGM_conv_eq_3}}{\leq} \xi_{k+1} - \frac{1}{2}\sqrt{\frac{\left(\mu + \sum_{i = 1}^{k-1} \beta_{i,k} \gamma_i\right) \xi_{k+1}^2}{L} - 1}.
	\end{align}
	Now, suppose $\xi_{k+1} < \omega(t+1)$. Substituting it into \eqref{FGM_conv_eq_5}, we obtain
	\begin{align}
		\omega(t) &\stackrel{\eqref{FGM_conv_eq_5}}{<} \omega(t+1) - \frac{1}{2}\sqrt{\frac{\left(\mu + \sum_{i = 1}^{k-1} \beta_{i,k} \gamma_i\right) \xi_{k+1}^2}{L} - 1}.
	\end{align}
	Then, applying \eqref{FGM_conv_eq_4} and the definition of $\delta$, yields
	\begin{align}
		\omega(t) \! &\leq \! \omega(t \! + \! 1) \! - \! \frac{1}{2}\sqrt{\! 4 \delta^2 \! \! \left[ \! \frac{\sqrt{2}}{4 \delta} \sqrt{\frac{L}{\mu}} \! \left(e^{(t+2) \delta} \! - \! e^{-(t+2) \delta}\right) \! \right]^2 \! \! \! \! - \! 1} \\ \nonumber
		&= \omega(t+1) - \frac{2}{4} \sqrt{\frac{L}{\mu}} \left[e^{(t+2) \delta} + e^{-(t+2) \delta}\right] \\ \nonumber
		&= \omega(t+1) + \omega(t+1)'\left(t - (t+1)\right) \leq \omega(t),
	\end{align}
	where the last inequality follows from the supporting hyperplane theorem of convex functions. Evidently, this leads to a contradiction with our earlier assumption, which implies that $\xi_{k+1} < \omega(k \! + \! 1), \forall k$. Therefore, \eqref{FGM_conv_eq_4} must hold true. 
	
	Setting $\gamma_{0} = 0$ in \eqref{xi_k_def}, we have
	\begin{align}
		\label{ttttttttttttt}
		\lambda_{k} &= \frac{L}{\left(\mu + \sum_{i = 1}^{k-1} \beta_{i,k} \gamma_i\right) \xi_{k}^2} \stackrel{\eqref{FGM_conv_eq_4}}{\leq} \frac{ \mu (4 \delta)^2}{2 \left(\mu + \sum_{i = 1}^{k-1} \beta_{i,k} \gamma_i\right) \left[e^{(k+1) \delta} - e^{(k+1) \delta}\right]^2},
	\end{align}
	Lastly, applying the definition of $\delta$ in \eqref{ttttttttttttt}, we obtain the first inequality in  \eqref{FGM_conv_eq_66}.
	
	Now, we focus on obtaining the second inequality in \eqref{FGM_conv_eq_66}. We start by abbreviating $\mathbb{A} = \frac{\mu + \sum_{i = 1}^{k-1} \beta_{i,k} \gamma_i}{L}$, and consider the following
	\begin{align}
		\label{non-strongly-cvx-lambda}
		\left(e^{\frac{k + 1}{2} \sqrt{\mathbb{A}}} - e^{-\frac{k + 1}{2} \sqrt{\mathbb{A}}}\right)^2 &= e^{\left(k+1\right) \sqrt{\mathbb{A}}} - e^{-\left(k+1\right) \sqrt{\mathbb{A}}} - 2 \stackrel{}{=} 2*\text{cosh}\left(\sqrt{\mathbb{A}} \left(k+1\right) - 2\right).
	\end{align}
	Utilizing the Taylor expansion of the hyperbolic cosine function, we obtain
	\begin{align}
		\label{to be truncated}
		\left(e^{\frac{k + 1}{2} \sqrt{\mathbb{A}}} - e^{-\frac{k + 1}{2} \sqrt{\mathbb{A}}}\right)^2 &= -2 + 2 + 2 \frac{\mathbb{A} \left(k+1\right)^2}{2} + 2  \frac{\mathbb{A}^2 \left(k+1\right)^4}{4!} + \ldots \text{.}
	\end{align}
	Substituting the abbreviation made for $\mathbb{A}$ and truncating the RHS of \eqref{to be truncated}, we obtain
	\begin{align}
		\label{to be substituted}
		\left(e^{\frac{k + 1}{2} \sqrt{\frac{\mu + \sum_{i = 1}^{k-1} \beta_{i,k} \gamma_i}{L}}} - e^{-\frac{k + 1}{2} \sqrt{\frac{\mu + \sum_{i = 1}^{k-1} \beta_{i,k} \gamma_i}{L}}}\right)^2 \geq \frac{\mu + \sum_{i = 1}^{k-1} \beta_{i,k} \gamma_i}{L} \left(k+1\right)^2
	\end{align}
	Then, substituting the lower bound \eqref{to be substituted} into the denominator of the first inequality of \eqref{FGM_conv_eq_66}, we obtain the desired result.  
	
	%%%%%%%%%%%%%%%%%%%%%%%%%%%%%%%%%%%%%%%%%%%%%%%%%%%%%%%%%%%%
	\section{Proof of Theorem 2}
	\label{Proof of Theorem 2}
	%%%%%%%%%%%%%%%%%%%%%%%%%%%%%%%%%%%%%%%%%%%%%%%%%%%%%%%%%%%%
	Combining the result of Theorem~1 and the inequality $f(x_0) - f^* \leq \frac{L}{2} ||x_0 - x^*||^2$, we obtain
	\begin{align}
		\label{relax}
		f(x_k) \! - \! f(x^*) \! \leq \! \frac{\lambda_k L}{2} ||x_0 - x^*||^2 - (1-\lambda_k)\psi_k(x^*)
	\end{align} 
	Substituting the bound on the term $\lambda_k$ obtained in \eqref{FGM_conv_eq_66} in the paper, yields \eqref{tttr}, in the paper. Then, relaxing the upper bound in \eqref{relax}, yields
	\begin{align}
		\label{FGM_conv_eq_7}
		f(x_k) - f(x^*) \leq \frac{2 \mu ||x_0 - x^*||^2}{e^{(k+1) \sqrt{\frac{\mu + \sum_{i = 1}^{k-1} \beta_{i,k} \gamma_{i}}{L}}} - 1}.
	\end{align}
	
	Therefore, in view of \eqref{eps}, our problem will be solved for 
	\begin{align}
		k_{SFGM} > \sqrt{\frac{L}{\mu + \sum_{i = 1}^{k-1} \beta_{i,k} \gamma_{i}}} \text{ln} \left(1 + \frac{2 \mu R_0^2}{\epsilon}\right).
	\end{align}
	Moreover, we have 
	\begin{align}
		\text{ln} \! \left( \! 1 + \frac{2 \mu R_0^2}{\epsilon} \!\right) \! \stackrel{\eqref{eps}}{\leq} \! \text{ln} \left(\frac{\mu R^2}{2 \epsilon} + \frac{2 \mu R_0^2}{\epsilon}\right) \! = \! \text{ln} \left(\frac{5 \mu R_0^2}{2 \epsilon}\right).
	\end{align} 
	Finally, the lower bound on the number of iterations for Algorithm~1 is
	\begin{align}
		\label{sfgm_bound}
		k_{SFGM} &\geq \sqrt{\frac{L}{\mu + \sum_{i = 1}^{k-1} \beta_{i,k} \gamma_{i}}} \left(\text{ln} \left(\frac{\mu R_0^2}{2 \epsilon}\right) + \text{ln}(5)\right) \\ \label{lower_bound__} &\rightarrow \sqrt{\frac{L}{2 \mu}} \left(\text{ln} \left(\frac{\mu R_0^2}{2 \epsilon}\right) + \text{ln}(5)\right).
	\end{align}
	In the paper, we also present a scheme that converges quickly to the lower bound \eqref{lower_bound__}.
	
	From the lower complexity bounds for the class of smooth and strongly convex functions, we have that
	\begin{align}
		\label{k_bound}
		k_{bound} &\geq \frac{\sqrt{L/\mu} - 1}{4} \text{ln} \left(\frac{\mu R_0^2}{2 \epsilon}\right).
	\end{align}
	Clearly, the bound obtained in \eqref{sfgm_bound} is proportional to \eqref{k_bound}. Therefore, we can conclude that our proposed method is optimal.

	\bibliographystyle{IEEEtran}
	%\bibliographystyle{plain}
	%\bibliography{IEEEabrv,ref}
	\bibliography{IEEEabrv,Heathabrv,refv9,Papers}

% Generated by IEEEtran.bst, version: 1.14 (2015/08/26)
\begin{thebibliography}{10}
\providecommand{\url}[1]{#1}
\csname url@samestyle\endcsname
\providecommand{\newblock}{\relax}
\providecommand{\bibinfo}[2]{#2}
\providecommand{\BIBentrySTDinterwordspacing}{\spaceskip=0pt\relax}
\providecommand{\BIBentryALTinterwordstretchfactor}{4}
\providecommand{\BIBentryALTinterwordspacing}{\spaceskip=\fontdimen2\font plus
\BIBentryALTinterwordstretchfactor\fontdimen3\font minus
  \fontdimen4\font\relax}
\providecommand{\BIBforeignlanguage}[2]{{%
\expandafter\ifx\csname l@#1\endcsname\relax
\typeout{** WARNING: IEEEtran.bst: No hyphenation pattern has been}%
\typeout{** loaded for the language `#1'. Using the pattern for}%
\typeout{** the default language instead.}%
\else
\language=\csname l@#1\endcsname
\fi
#2}}
\providecommand{\BIBdecl}{\relax}
\BIBdecl

\bibitem{6879577}
K.~{Slavakis}, G.~B. {Giannakis}, and G.~{Mateos}, ``Modeling and optimization
  for big data analytics: (statistical) learning tools for our era of data
  deluge,'' \emph{IEEE Signal Processing Magazine}, vol.~31, no.~5, pp. 18--31,
  2014.

\bibitem{6879615}
V.~{Cevher}, S.~{Becker}, and M.~{Schmidt}, ``Convex optimization for big data:
  Scalable, randomized, and parallel algorithms for big data analytics,''
  \emph{IEEE Signal Processing Magazine}, vol.~31, no.~5, pp. 32--43, 2014.

\bibitem{8119874}
A.~P. {Liavas}, G.~{Kostoulas}, G.~{Lourakis}, K.~{Huang}, and N.~D.
  {Sidiropoulos}, ``Nesterov-based alternating optimization for nonnegative
  tensor factorization: Algorithm and parallel implementation,'' \emph{IEEE
  Transactions on Signal Processing}, vol.~66, no.~4, pp. 944--953, 2018.

\bibitem{9053189}
B.~{Li}, M.~{Couti{\~n}o}, and G.~B. {Giannakis}, ``Revisit of estimate
  sequence for accelerated gradient methods,'' \emph{IEEE International
  Conference on Acoustics, Speech and Signal Processing (ICASSP)}, pp.
  3602--3606, 2020.

\bibitem{9028239}
M.~S. {Ibrahim}, A.~{Konar}, and N.~D. {Sidiropoulos}, ``Fast algorithms for
  joint multicast beamforming and antenna selection in massive mimo,''
  \emph{IEEE Transactions on Signal Processing}, vol.~68, pp. 1897--1909, 2020.

\bibitem{gu2017projected}
R.~Gu and A.~Dogand{\v{z}}i{\'c}, ``Projected {Nesterov's} proximal-gradient
  algorithm for sparse signal recovery,'' \emph{IEEE Transactions on Signal
  Processing}, vol.~65, no.~13, pp. 3510--3525, 2017.

\bibitem{Beck_book}
A.~Beck, \emph{First-Order Methods in Optimization}.\hskip 1em plus 0.5em minus
  0.4em\relax Philadelphia, PA: SIAM, 2017.

\bibitem{Nesterov_83}
Y.~Nesterov, ``A method for solving the convex programming problem with
  convergence rate $\mathcal{O}(1/k^2)$,'' vol. 269.\hskip 1em plus 0.5em minus
  0.4em\relax \textit{Doklady AN USSR}, 1983, pp. 543--547.

\bibitem{Nemirovski_Yudin}
A.~Nemirovsky and D.~Yudin, \emph{Problem complexity and method efficiency in
  optimization.}\hskip 1em plus 0.5em minus 0.4em\relax John Wiley, New York,
  1983.

\bibitem{nesterov2005smooth}
Y.~Nesterov, ``Smooth minimization of non-smooth functions,''
  \emph{Mathematical Programming}, vol. 103, no.~1, pp. 127--152, 2005.

\bibitem{Auslender_Teboulle}
A.~Auslender and M.~Teboulle, ``Interior gradient and proximal methods for
  convex and conic optimization,'' \emph{SIAM Journal on Optimization},
  vol.~16, no.~3, pp. 697--725, July 2006.

\bibitem{FISTA}
A.~Beck and M.~Teboulle, ``A fast iterative shrinkage-thresholding algorithm
  for linear inverse problems,'' \emph{SIAM Journal on Imaging Sciences},
  vol.~2, no.~1, pp. 183--202, 2009.

\bibitem{Nesterov_2007}
Y.~Nesterov, ``Gradient methods for minimizing composite functions,''
  \emph{Mathematical Programming}, vol. 140, no.~1, pp. 125--161, 2013.

\bibitem{Aspremont}
A.~d'Aspremont, ``Smooth optimization with approximate gradient,'' \emph{SIAM
  Journal on Optimization}, vol.~19, no.~3, pp. 1171--1183, 2008.

\bibitem{devolder2014first}
O.~Devolder, F.~Glineur, and Y.~Nesterov, ``First-order methods of smooth
  convex optimization with inexact oracle,'' \emph{Mathematical Programming},
  vol. 146, no. 1-2, pp. 37--75, 2014.

\bibitem{schmidt2011convergence}
M.~Schmidt, N.~L. Roux, and F.~R. Bach, ``Convergence rates of inexact
  proximal-gradient methods for convex optimization,'' in \emph{Advances in
  Neural Information Processing Systems}, 2011, pp. 1458--1466.

\bibitem{9137636}
H.~Mohammadi, M.~Razaviyayn, and M.~R. Jovanovic, ``Robustness of accelerated
  first-order algorithms for strongly convex optimization problems,''
  \emph{IEEE Transactions on Automatic Control}, vol.~66, no.~6, pp.
  2480--2495, 2021.

\bibitem{pmlr-v40-Flammarion15}
N.~Flammarion and F.~Bach, ``From averaging to acceleration, there is only a
  step-size,'' ser. \textit{Proceedings of Machine Learning Research}, vol.~40,
  Paris, France, July 2015, pp. 658--695.

\bibitem{Su_Boyd_Candes}
W.~Su, S.~Boyd, and E.~J. Cand{{\`e}}s, ``A differential equation for modeling
  {N}esterov's accelerated gradient method: Theory and insights,''
  \emph{Journal of Machine Learning Research}, vol.~17, no. 153, pp. 1--43,
  2016.

\bibitem{WibisonoE7351}
A.~Wibisono, A.~C. Wilson, and M.~I. Jordan, ``A variational perspective on
  accelerated methods in optimization,'' \emph{Proceedings of the National
  Academy of Sciences}, vol. 113, no.~47, pp. E7351--E7358, 2016.

\bibitem{Allen-Zhu}
Z.~{Allen Zhu} and L.~Orecchia, ``Linear coupling: An ultimate unification of
  gradient and mirror descent,'' \emph{Proceedings of the 8th Innovations in
  Theoretical Computer Science}, pp. 1--15, November 2017.

\bibitem{Bubeck_geod}
S.~Bubeck, Y.~T. Lee, and M.~Singh, ``A geometric alternative to {Nesterov's}
  accelerated gradient descent,'' \emph{arXiv:1506.08187}, 2015.

\bibitem{Fazel}
D.~Drusvyatskiy, M.~Fazel, and S.~Roy, ``An optimal first order method based on
  optimal quadratic averaging,'' \emph{SIAM Journal on Optimization}, vol.~28,
  no.~1, pp. 251--271, 2018.

\bibitem{Recht}
L.~Lessard, B.~Recht, and A.~Packard, ``Analysis and design of optimization
  algorithms via integral quadratic constraints,'' \emph{SIAM Journal on
  Optimization}, vol.~26, no.~1, pp. 57--95, 2016.

\bibitem{van2017fastest}
B.~Van~Scoy, R.~A. Freeman, and K.~M. Lynch, ``The fastest known globally
  convergent first-order method for minimizing strongly convex functions,''
  \emph{IEEE Control Systems Letters}, vol.~2, no.~1, pp. 49--54, 2017.

\bibitem{drori2014performance}
Y.~Drori and M.~Teboulle, ``Performance of first-order methods for smooth
  convex minimization: a novel approach,'' \emph{Mathematical Programming},
  vol. 145, no.~1, pp. 451--482, 2014.

\bibitem{taylor2017smooth}
A.~B. Taylor, J.~M. Hendrickx, and F.~Glineur, ``Smooth strongly convex
  interpolation and exact worst-case performance of first-order methods,''
  \emph{Mathematical Programming}, vol. 161, no. 1-2, pp. 307--345, 2017.

\bibitem{kim2016optimized}
D.~Kim and J.~A. Fessler, ``Optimized first-order methods for smooth convex
  minimization,'' \emph{Mathematical programming}, vol. 159, no.~1, pp.
  81--107, 2016.

\bibitem{drori2017exact}
Y.~Drori, ``The exact information-based complexity of smooth convex
  minimization,'' \emph{Journal of Complexity}, vol.~39, pp. 1--16, 2017.

\bibitem{taylor2021optimal}
A.~Taylor and Y.~Drori, ``An optimal gradient method for smooth strongly convex
  minimization,'' \emph{arXiv preprint arXiv:2101.09741}, 2021.

\bibitem{park2021factor}
C.~Park, J.~Park, and E.~K. Ryu, ``Factor-{$\sqrt{2}$} acceleration of
  accelerated gradient methods,'' \emph{arXiv preprint arXiv:2102.07366}, 2021.

\bibitem{Ryu}
J.~Lee, C.~Park, and E.~K. Ryu, ``A geometric structure of acceleration and its
  role in making gradients small fast,'' \emph{arXiv preprint
  arXiv:2106.10439}, 2021.

\bibitem{NIPS2009_3817}
C.~Hu, W.~Pan, and J.~T. Kwok, ``Accelerated gradient methods for stochastic
  optimization and online learning,'' in \emph{Advances in Neural Information
  Processing Systems}, 2009, pp. 781--789.

\bibitem{kulunchakov:hal-01993531}
A.~Kulunchakov and J.~Mairal, ``Estimate sequences for stochastic composite
  optimization: Variance reduction, acceleration, and robustness to noise,''
  \emph{arXiv:1901.08788}, 2019.

\bibitem{lan2012optimal}
G.~Lan, ``An optimal method for stochastic composite optimization,''
  \emph{Mathematical Prfogramming}, vol. 133, no. 1-2, pp. 365--397, 2012.

\bibitem{ahn2020nesterov}
K.~Ahn and S.~Sra, ``From {Nesterov's} estimate sequence to {Riemannian}
  acceleration,'' \emph{arXiv preprint arXiv:2001.08876}, January 2020.

\bibitem{pmlr-v75-zhang18a}
H.~Zhang and S.~Sra, ``An estimate sequence for geodesically convex
  optimization,'' in \emph{Proceedings of the 31st Conference On Learning
  Theory}, vol.~75, July 2018, pp. 1703--1723.

\bibitem{baes2009estimate}
M.~Baes, ``Estimate sequence methods: extensions and approximations,''
  \emph{Institute for Operations Research, ETH, Z{\"u}rich, Switzerland}, 2009.

\bibitem{fast_newton_method}
Y.~Nesterov, ``Accelerating the cubic regularization of {N}ewton's method on
  convex problems,'' \emph{Mathematical Programming}, vol. 112, pp. 159--181,
  01 2008.

\bibitem{nesterov2020inexact}
\BIBentryALTinterwordspacing
------, ``Inexact accelerated high-order proximal-point methods,'' Tech. Rep.,
  2020. [Online]. Available:
  \url{https://dial.uclouvain.be/pr/boreal/object/boreal%3A227219/datastream/PDF_01/view}
\BIBentrySTDinterwordspacing

\bibitem{doi:10.1137/17M1114296}
Y.~Carmon, J.~C. Duchi, O.~Hinder, and A.~Sidford, ``Accelerated methods for
  nonconvex optimization,'' \emph{SIAM Journal on Optimization}, vol.~28,
  no.~2, pp. 1751--1772, 2018.

\bibitem{ghadimi2016accelerated}
S.~Ghadimi and G.~Lan, ``Accelerated gradient methods for nonconvex nonlinear
  and stochastic programming,'' \emph{Mathematical Programming}, vol. 156, no.
  1-2, pp. 59--99, 2016.

\bibitem{li2015accelerated}
H.~Li and Z.~Lin, ``Accelerated proximal gradient methods for nonconvex
  programming,'' in \emph{Advances in Neural Information Processing Systems},
  2015, pp. 379--387.

\bibitem{ji2009accelerated}
S.~Ji and J.~Ye, ``An accelerated gradient method for trace norm
  minimization,'' in \emph{Proceedings of the 26th Annual International
  Conference on Machine Learning}, 2009, pp. 457--464.

\bibitem{boom}
I.~Mukherjee, K.~Canini, R.~Frongillo, and Y.~Singer, ``Parallel boosting with
  momentum,'' in \emph{Machine Learning and Knowledge Discovery in
  Databases}.\hskip 1em plus 0.5em minus 0.4em\relax Springer Berlin
  Heidelberg, 2013, pp. 17--32.

\bibitem{uribe2020dual}
C.~A. Uribe, S.~Lee, A.~Gasnikov, and A.~Nedi{\'c}, ``A dual approach for
  optimal algorithms in distributed optimization over networks,''
  \emph{Optimization Methods and Software}, pp. 1--40, 2020.

\bibitem{ye2020multiconsensus}
H.~Ye, L.~Luo, Z.~Zhou, and T.~Zhang, ``Multi-consensus decentralized
  accelerated gradient descent,'' \emph{arXiv:2005.00797}, 2020.

\bibitem{Nesterov_88}
Y.~Nesterov, ``On an approach to the construction of optimal methods of
  minimization of smooth convex functions,'' \emph{Ekonomika i Mateaticheskie
  Metody}, vol.~24, no.~3, pp. 509--517, 1988.

\bibitem{Tseng}
\BIBentryALTinterwordspacing
P.~Tseng, ``On accelerated proximal gradient methods for convex-concave
  optimization,'' 2008. [Online]. Available:
  \url{https://www.mit.edu/~dimitrib/PTseng/papers/apgm.pdf}
\BIBentrySTDinterwordspacing

\bibitem{Nesterov_book}
Y.~Nesterov, \emph{Lectures on Convex Optimization}, 2nd~ed.\hskip 1em plus
  0.5em minus 0.4em\relax Springer Publishing Company, Incorporated, 2018.

\bibitem{Iulian_1}
M.~I. {Florea} and S.~A. {Vorobyov}, ``An accelerated composite gradient method
  for large-scale composite objective problems,'' \emph{IEEE Transactions on
  Signal Processing}, vol.~67, no.~2, pp. 444--459, Jan 2019.

\bibitem{Iulian_2}
------, ``A generalized accelerated composite gradient method: Uniting
  {N}esterov's fast gradient method and {FISTA},'' \emph{IEEE Transactions on
  Signal Processing}, vol.~68, pp. 3033--3048, 2020.

\bibitem{Polyak}
B.~T. Polyak, ``Some methods of speeding up the convergence of iteration
  methods,'' \emph{USSR Computational Mathematics and Mathematical Physics},
  vol.~4, no.~5, pp. 1--17, 1964.

\bibitem{Boyd-Vandenberghe-04}
S.~Boyd and L.~Vandenberghe, \emph{Convex Optimization}.\hskip 1em plus 0.5em
  minus 0.4em\relax Cambridge, UK: Cambridge University Press, 2004.

\bibitem{LIBSVM}
C.-C. Chang and C.-J. Lin, ``{LIBSVM}: A library for support vector machines,''
  \emph{ACM Transactions on Intelligent Systems and Technology}, vol.~2, pp.
  27:1--27:27, 2011, software available at
  \url{http://www.csie.ntu.edu.tw/~cjlin/libsvm}.

\bibitem{CVX}
M.~Grant, S.~Boyd, and Y.~Ye, ``{CVX}: Matlab software for disciplined convex
  programming (web page and software),'' 2009.

\bibitem{Nesterov_optima}
Y.~Nesterov, ``How to make the gradients small,'' \emph{Optima}, vol.~88, pp.
  10--11, 2012.

\bibitem{Lower_Bounds}
Y.~Carmon, J.~Duchi, O.~Hinder, and A.~Sidford, ``Lower bounds for finding
  stationary points {II}: First-order methods,'' \emph{Mathematical
  Programming}, 11 2017.

\bibitem{liang2019average}
J.~Liang and R.~D. Monteiro, ``An average curvature accelerated composite
  gradient method for nonconvex smooth composite optimization problems,''
  \emph{arXiv:1909.04248}, 2019.

\bibitem{doi:10.1137/16M1099546}
H.~Lu, R.~M. Freund, and Y.~Nesterov, ``Relatively smooth convex optimization
  by first-order methods, and applications,'' \emph{SIAM Journal on
  Optimization}, vol.~28, no.~1, pp. 333--354, 2018.

\bibitem{candes_donoghue}
B.~O'Donoghue and E.~Cand{\`e}s, ``Adaptive restart for accelerated gradient
  schemes,'' \emph{Foundations of Computational Mathematics}, vol.~15, no.~3,
  pp. 715--732, Jun 2015.

\bibitem{7040179}
P.~{Giselsson} and S.~{Boyd}, ``Monotonicity and restart in fast gradient
  methods,'' in \emph{53rd IEEE Conference on Decision and Control}, 2014, pp.
  5058--5063.

\end{thebibliography}
\end{document}